\newcommand{\noop}[1]{}
\theoremstyle{plain}
\newtheorem{theorem}[equation]{Theorem}
\newtheorem{lemma}[equation]{Lemma}
\newtheorem{corollary}[equation]{Corollary}
\newtheorem{proposition}[equation]{Proposition}
\theoremstyle{definition}
\newtheorem{definition}[equation]{Definition}
\theoremstyle{remark}
\newtheorem{remark}[equation]{Remark}
\numberwithin{equation}{section}
\newcommand{\N}{\mathbb{N}}
\renewcommand{\Re}{\operatorname{Re}}
\newcommand{\eps}{\varepsilon}
\newcommand{\re}{\mathbb{R}}
\newcommand{\R}{\mathbb{R}}
\newcommand{\rn}{{\mathbb{R}^n}}
\newcommand{\ree}{\mathbb{R}^{1+n}}
\newcommand{\cL}{\mathcal{L}}
\newcommand{\lamprime}{\varrho} 
\newcommand{\bigdot}{\boldsymbol{\cdot}}
\DeclareMathOperator{\I}{I}
\DeclareMathOperator{\II}{II}
\DeclareMathOperator{\III}{III}
\def\Xint#1{\mathchoice
{\XXint\displaystyle\textstyle{#1}}%
{\XXint\textstyle\scriptstyle{#1}}%
{\XXint\scriptstyle\scriptscriptstyle{#1}}%
{\XXint\scriptscriptstyle%
\scriptscriptstyle{#1}}%
\!\int}
\def\XXint#1#2#3{{\setbox0=\hbox{$#1{#2#3}{%
\int}$ }
\vcenter{\hbox{$#2#3$ }}\kern-.6\wd0}}
\def\fint{\,\Xint -} 
\def\fiint{\fint_{} \kern-.4em \fint}
\def\fiiint{\fiint_{} \kern-.4em \fint}
\renewcommand{\iint}{\int_{}\kern-.34em \int} 
\renewcommand{\iiint}{\iint_{}\kern-.34em \int} 
\def\div{\mathop{\operatorname{div}}\nolimits}
\providecommand{\ab}[1]{  \lvert  #1  \rvert }
\providecommand{\no}[1]{  \lVert  #1  \rVert }
\providecommand{\fint}{\strokedint}
\providecommand{\fiint}{\strokediint}
\providecommand{\fiiint}{\strokediiint}
\providecommand{\la}[1]{\langle#1\rangle}
\author{Simon Bortz}
\author{Moritz Egert}
\author{Olli Saari}
\address{Simon Bortz, Department of Mathematics, University of Alabama, Tuscaloosa, AL, 35487, USA}
	\email{sbortz@ua.edu} 
\address{Moritz Egert, Department of Mathematics, TU Darmstadt, Schlossgartenstra\ss e 7, 64289 Darmstadt, Germany}
\email{egert@mathematik.tu-darmstadt.de}
\address{Olli Saari, Departament de Matem\`atiques,
	Universitat Polit\`ecnica de Catalunya,
	Avinguda Diagonal 647, 08028 Barcelona,
	Catalunya, Spain}
\address{Institute of Mathematics of UPC-BarcelonaTech, Pau Gargallo 14, 08028 Barcelona, Catalunya, Spain}
\address{Centre de Recerca Matem\`atica, Edifici C, Campus Bellaterra, 08193 Bellaterra, Catalunya, Spain}
\email{olli.saari@upc.edu}
\begin{document}
\allowdisplaybreaks
\title[A stability result for parabolic measures]{A stability result for parabolic measures of operators with singular drifts}
\begin{abstract}
We study the operator 
\[ \partial_t -  \div A \nabla + B \cdot \nabla \]
in parabolic upper-half-space, where $A$ is an elliptic matrix satisfying an oscillation condition 
and $B$ is a singular drift with a Carleson control. Our main result establishes quantitative $A_{\infty}$-estimates for the parabolic measure 
in terms of oscillation of $A$ and smallness of $B$.
The proof relies on new estimates for parabolic Green functions that quantify their deviations from linear functions of the normal variable and on a novel, quantitative Carleson measure criterion for anisotropic $A_{\infty}$-weights.
\end{abstract}
\maketitle
\tableofcontents
\section{Introduction}

\noindent We work in the parabolic upper-half-space 
\[
\mathbb{R}^{n+1}_+ \coloneqq \{(t,x,\lambda): t \in \mathbb{R}, x \in \mathbb{R}^{n-1}, \lambda > 0\}
\]
and study differential operators of the form 
\begin{equation}\label{Ldefine.eq}
\cL \coloneqq \partial_t -  \div A \nabla + B \cdot \nabla,
\end{equation}
where $A : \mathbb{R}^{n+1}_+ \to \mathbb{R}^{n \times n}$ is an elliptic matrix-valued function (with ellipticity constant $M_0$) and $B: \mathbb{R}^{n+1}_+ \to \mathbb{R}^{n}$ is a singular drift term with a uniform bound $|\lambda B(t,x,\lambda)| \leq \varepsilon_0$. The matrices $A$ satisfy a (weak) Dahlberg--Kenig--Pipher oscillation condition in all variables and $\lambda B$ satisfies a smallness condition, both being measured by certain coefficients $\alpha_{A,B}(t,x,\lambda)$ and quantified through the requirement that
\[
d\nu_{A,B}(t,x,\lambda) \coloneqq  \alpha_{A,B}(t,x,\lambda)^{2}  \frac{dt\, dx\, d\lambda}{\lambda}
\]
should be a (parabolic) Carleson measure on $\ree_{+}$, see Section~\ref{Preliminaries} for precise definitions and further notation. This class of operators arises as pull-backs when studying parabolic equations in parabolic Lipschitz domains~\cite{HL-Mem}. 

Our main theorem is a stability result for the parabolic measure, analogous to 
the culmination of the sequence of the three papers \cite{DLM}, \cite{BTZ2} and \cite{BES-DKP} from the elliptic setting.
Moreover, it can be viewed as an analytic version of a small constant, variable coefficients variant of \cite{MR2053754}. It quantitatively demonstrates that small oscillations of $A$ and small drift $B$ lead to small oscillations of the parabolic measure when the pole is sufficiently far from the boundary.
\begin{theorem}
\label{main-theorem}
Let $M_0$ be given.
There are $\varepsilon_0 > 0$, $\delta_0 > 0$ and $C \ge 1$ such that the following hold whenever $\cL$ is $(M_0,\varepsilon_0)$-parabolic and $(t_0,x_0,\lambda_0) \in \ree_{+}$.
\begin{enumerate}
	\item For every $\delta \in (0, \delta_0)$ there is $\kappa \geq 40$ such that if $\no{\nu_{A,B}}_{\mathcal{C}} \leq \delta$, then the $\cL$-parabolic measure $\omega$ with pole at the forward corkscrew point $a^{+}(t_0,x_0,\kappa\lambda_0)$ is absolutely continuous with respect to Lebesgue measure on $Q^n(t_0,x_0,2\lambda_0)$.
	\item Denoting $k \coloneqq \frac{d\omega}{dt\,dx}$ in the setting of (1), the quantitative $A_\infty$-estimate
	\[
	\log \biggl(\fiint_{Q} k(t,x) \, dt \, dx \biggr)  - \fiint_{Q} \log k(t,x) \, dt \, dx 
	\le C \sqrt{\delta} 
	\]
	holds for all parabolic cubes $Q \subset Q^n(t_0,x_0,\lambda_0)$.
\end{enumerate}
\end{theorem}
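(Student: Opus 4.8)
The plan is to reduce the quantitative $A_\infty$-estimate of part (2) to a Carleson measure bound for the parabolic gradient of $\log k$, and then to invoke the quantitative Carleson criterion for anisotropic $A_\infty$-weights announced in the abstract; part (1) will fall out along the way.

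\textbf{Step 1: potential theory and absolute continuity.} First I would record the basic toolkit for $\cL$: existence and uniqueness of the $\cL$-parabolic measure $\omega=\omega^X$, the associated Green function $G(X,\cdot)$, and the Caffarelli--Fabes--Mortola--Salsa-type comparability
\[
G(X,(s,y,\rho)) \approx \rho\,\frac{\omega^X(Q^n((s,y),\rho))}{|Q^n((s,y),\rho)|},
\]
valid in a Whitney region around $(s,y,\rho)$ whenever the pole $X$ is far from the boundary. With $X=a^{+}(t_0,x_0,\kappa\lambda_0)$ and $\kappa$ large, combining this with the two-sided bound $G(X,\cdot)/\lambda \approx 1$ on $Q^n(t_0,x_0,2\lambda_0)$ — itself a by-product of the Green function estimate of Step 2 — forces mutual absolute continuity of $\omega$ and Lebesgue measure there, giving part (1) and the density $k=d\omega/dt\,dx$.

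\textbf{Step 2: the Green function is close to a linear function of $\lambda$.} This is the analytic core, and I expect it to be the main obstacle. Writing $g:=G(X,\cdot)$, I would prove that over Carleson boxes $T(Q)$ with $Q\subset Q^n(t_0,x_0,\lambda_0)$ one has
\[
\iint_{T(Q)} \Bigl|\nabla_{t,x,\lambda}\log\tfrac{g}{\lambda}\Bigr|^{2}\,\lambda\,dt\,dx\,d\lambda \;\lesssim\; \no{\nu_{A,B}}_{\mathcal{C}}\,|Q| \;\lesssim\; \delta\,|Q|.
\]
The mechanism: the linear profile $\lambda$ is a null solution of the constant-coefficient heat-type operator associated with $\cL$, while $g$ solves the adjoint equation away from the pole, so the defect $g-\beta\lambda$ (with $\beta$ a suitable normalization comparable to an average of $k$) satisfies an equation whose right-hand side is driven by $(A-\overline{A})\nabla(\cdot)$ and by the drift term $B\cdot\nabla(\cdot)$ — exactly the quantities measured by $\alpha_{A,B}$. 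Testing this equation against $g-\beta\lambda$, or running the integration-by-parts identities of the elliptic \cite{DLM}--\cite{BES-DKP} circle in their parabolic incarnation, and controlling the resulting solid integrals by interior parabolic Cauchy estimates, boundary Hölder regularity and the CFMS comparison, every error term is absorbed into $\iint \alpha_{A,B}^{2}\,\lambda^{-1}\,dt\,dx\,d\lambda$, i.e.\ into $\nu_{A,B}$. The bound $|\lambda B|\le\varepsilon_0$ is precisely what makes the singular drift contribution absorbable, and taking $\kappa$ large (depending on $\delta$) pushes the pole far enough that $g/\lambda$ has already equilibrated to a near-constant before $T(Q)$ is reached. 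The anisotropy of the parabolic metric means backward-in-time cones and the parabolic Harnack inequality must replace their elliptic analogues throughout, which is where most of the technical work lies.

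\textbf{Step 3: from $g$ to $k$, and conclusion.} On each Whitney box at height $\rho$ inside $T(Q)$ the CFMS comparison gives $|\log k-\log(g/\lambda)|\lesssim 1$ after normalization, and the Carleson bound of Step 2 localizes there; a standard telescoping over dyadic heights then controls the oscillation of $\log k$ on the corresponding surface cube, and summing over the Whitney decomposition yields
\[
\iint_{T(Q)} |\nabla_{t,x} u_k|^{2}\,\lambda\,dt\,dx\,d\lambda \;\lesssim\; \delta\,|Q|,
\]
where $u_k$ is the parabolic extension attached to $\log k$. Finally I would invoke the quantitative Carleson criterion for anisotropic $A_\infty$-weights: a weight $w$ whose logarithm generates a Carleson measure of norm $\eta$ in this sense lies in $A_\infty$ with $\log\bigl(\fiint_Q w\bigr)-\fiint_Q \log w \lesssim \sqrt{\eta}$ for all parabolic cubes $Q$. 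Applying it with $w=k$ and $\eta\lesssim\delta$ delivers exactly the bound $\le C\sqrt{\delta}$ of part (2). The square-root loss is intrinsic: it arises from passing, via Cauchy--Schwarz on Whitney regions, from the $L^2$-Carleson control of $\nabla u_k$ to a pointwise-in-$Q$ oscillation estimate.
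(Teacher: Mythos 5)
Your overall architecture (near-linearity of the Green function with Carleson control, transfer to the parabolic measure, an anisotropic $A_\infty$ criterion, and a large $\kappa$ to absorb the top scales) matches the paper, but two of your three steps contain genuine gaps. The most serious one is Step 3. The CFMS comparison only gives $|\log k_{\mathrm{avg}} - \log(g/\lambda)| \le \log C$ with an order-one constant $C=C(M_0,n)$, so ``$\lesssim 1$ after normalization'' plus telescoping can never convert the $\delta$-small Carleson bound for $g$ into a $\delta$-small Carleson bound for $\nabla_{t,x}u_k$; at best you recover an order-one bound, which destroys the $C\sqrt\delta$ conclusion. The paper's mechanism is different: via the Riesz formula \eqref{Rieszformeq.eq} it bounds approximants to distributional derivatives of $\omega$ itself (Lemma~\ref{parab-measure-main.lem}), where the leading term $\la{A^{*}}\nabla\ell\cdot\nabla\partial_i\Psi$ integrates to zero exactly and the remaining terms are controlled by $J_u^{1/2}$ and $\alpha_{A,B}E_u^{1/2}$; CFMS is used only to bound the denominator $\phi_\lambda*\omega$ from below, never to transfer smallness. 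Relatedly, your criterion is stated for a weight $w=k$ whose logarithm has small Carleson gradient, which (i) presupposes the density $k$ exists, making part~(1) circular, and (ii) in its FKP form requires global doubling, which parabolic measure does not have --- this is precisely why the paper proves Theorem~\ref{weights-characterization.thm} for the measure $\omega$ directly, with split heat kernels and only local doubling, and obtains absolute continuity as a conclusion rather than an input. Your Step 1 claim that $G/\lambda\approx 1$ pointwise is ``a by-product of Step 2'' is also unjustified: a square-function/Carleson smallness does not give pointwise two-sided bounds at all scales, and in the paper absolute continuity comes out of the weights criterion (uniform $A_\infty$ of mollifications plus weak convergence), not from any pointwise comparison.

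Step 2 is closer in spirit but still incomplete. For merely measurable coefficients the pointwise quantity $|\nabla\log(g/\lambda)|^2\lambda\,dz$ is not accessible; the paper works with averaged energies $J_u$, $E_u$ and the ratio $\beta_u$ on Carleson regions, and the Carleson bound comes from a scale-by-scale iteration (Theorem~\ref{green-main-estimate.thm}) built on decay of $J$ and, crucially, non-degeneracy of $E$. Your ``test the defect equation against $g-\beta\lambda$'' scheme supplies only the upper (perturbation) half of this; it does not produce the lower bound on the Dirichlet energy, which is exactly where the time-directedness of the parabolic Harnack inequality bites and where the paper needs a genuinely new argument (Lemma~\ref{dec-lb-jande.lem}(2), exploiting the strong/backward Harnack inequality available because $u$ is a Green function, Lemma~\ref{HL-Green-pm.lem}). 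Without that non-degeneracy, dividing by $E_u$ (or taking $\log g$) is not justified, and the iteration yielding the $\theta^\gamma+\|\nu_{A,B}\|_{\mathcal C}$ bound --- and hence the role of $\kappa$ large via rescaling --- cannot be run.
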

The supremum of the left-hand side in part (2) of the theorem can be taken as a definition of the (local) $A_\infty$-constant of $\omega$. It implies the following quantitative statement for $L^p$-solvability of the Dirichlet problem for $\cL$, see Section~\ref{main results.sec}.

\begin{corollary}
\label{main-cor}
Let $M_0$ be given.
There is $\varepsilon_0 > 0$ such that if $\cL$ is $(M_0,\varepsilon_0)$-parabolic, then for every $q \in (1,\infty)$ there exists $\delta > 0$ such that
$\no{\nu}_{\mathcal{C}} \le \delta$ implies that all parabolic measures in Theorem~\ref{main-theorem} are locally reverse  H\"older weights with exponent $q$ and uniform constants. In particular, the $L^p$-Dirichlet problem for $\cL$ in the upper-half-space is solvable, where $1/p + 1/q = 1$.
\end{corollary}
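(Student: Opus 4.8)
The plan is to deduce the corollary from part~(2) of Theorem~\ref{main-theorem} by a purely \emph{soft} argument: a small exponential-integrability gap on every subcube self-improves to a reverse Hölder inequality with large exponent, and the latter is equivalent to $L^p$-solvability. Fix $M_0$ and take $\varepsilon_0$ as in Theorem~\ref{main-theorem}, and fix $q\in(1,\infty)$. By Theorem~\ref{main-theorem}(2), once $\no{\nu_{A,B}}_{\mathcal{C}}\le\delta\le\delta_0$, the density $k=d\omega/dt\,dx$ of the $\cL$-parabolic measure with pole $a^{+}(t_0,x_0,\kappa\lambda_0)$ satisfies
\[
\log\Bigl(\fiint_{Q}k\Bigr)-\fiint_{Q}\log k\le C\sqrt{\delta}\qquad\text{for every parabolic cube }Q\subset Q^n(t_0,x_0,\lambda_0).
\]
The key point is that this one-sided bound holds on the \emph{whole} family of subcubes, so the anisotropic $A_\infty$ machinery set up for the Carleson criterion in this paper applies.

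First I would invoke the quantitative chain of equivalences between the various anisotropic $A_\infty$ characterizations (exponential gap, weak reverse Hölder, Fujii--Wilson condition): applying the parabolic John--Nirenberg inequality to $\log k$, whose local BMO-type norm is controlled by the gap above, produces a non-decreasing function $q\mapsto\eta(q)>0$, depending only on $n$, such that $C\sqrt{\delta}\le\eta(q)$ forces
\[
\Bigl(\fiint_{Q}k^{q}\Bigr)^{1/q}\le C'\fiint_{Q}k\qquad\text{for all parabolic }Q\subset Q^n(t_0,x_0,\lambda_0),
\]
with $C'$ depending only on $n$ and $q$. Choosing $\delta=\delta(q)\coloneqq\min\{\delta_0,(\eta(q)/C)^2\}$ and $\kappa=\kappa(\delta(q))$ then yields the asserted locally uniform $RH_q$ estimate; translation and parabolic-dilation invariance of the hypotheses on $\cL$ make all constants independent of $(t_0,x_0,\lambda_0)$, which is exactly the "uniform constants" claim.

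It remains to pass from $RH_q$ for the parabolic measures to $L^p$-solvability with $1/p+1/q=1$. A reverse Hölder estimate makes $\omega$ doubling, so the usual change-of-pole estimates are available and the $RH_q$ bound propagates, up to the parabolic geometry, to the relevant cubes with a fixed pole; a standard patching/limiting argument over all scales and locations upgrades this to the global statement used to define solvability in Section~\ref{main results.sec}. One then runs the classical Dahlberg-type argument in the parabolic setting: for $L^p$ data $f$, the parabolic-measure solution $u(X)=\int f\,d\omega^{X}$ has nontangential maximal function dominated, via Hölder's inequality against the $RH_q$ density and the parabolic Hardy--Littlewood maximal operator, by $M(f)$, giving $\no{N_*u}_{L^p}\lesssim\no{f}_{L^p}$ with uniform constants. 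The genuinely new input is Theorem~\ref{main-theorem}; by comparison this corollary is routine, and the only point demanding care is that every self-improvement and change-of-pole step be carried out in the parabolic (anisotropic) metric, for which the covering and maximal-function tools are already in place. The main obstacle is thus bookkeeping: tracking the quantitative dependence $\delta\mapsto q$ and ensuring that none of the constants secretly depends on the pole.
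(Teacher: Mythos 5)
Your proposal is correct and follows essentially the same route as the paper: the small exponential gap from Theorem~\ref{main-theorem}(2) on all subcubes is self-improved to a reverse H\"older inequality of any prescribed exponent via quantitative small-constant $A_\infty$/John--Nirenberg theory (the paper simply cites \cite[Theorem~9]{Kor2}, noting John--Nirenberg holds for locally doubling measures in metric spaces, where you sketch that machinery yourself), and $L^p$-solvability then follows by the standard Dahlberg-type maximal function argument, for which the paper cites \cite{NystromLP} and \cite{HL-Mem}. The only step you assert rather than prove --- that the one-sided gap controls the local BMO norm of $\log k$ with a modulus tending to zero --- is precisely the content of the cited Korey result, so no genuine gap remains.
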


\begin{remark}
The methods here also seem to apply for $\mathcal{L}$ with no drift but without the smallness assumption on the measure $\nu$. In that case, one expects to obtain that the associated parabolic measure belongs to $A_\infty$ without quantitative control on the constant. The only modification required is to replace the proof of Theorem \ref{green-main-estimate.thm} with its ``large constant” analogue closely following \cite[Section 4]{DLM}. This result, in itself, would be new because of the {\bf weak}-DKP condition, but is not the focus of the current work (small constant results).
\end{remark}

\subsection{Strategy}The three main steps of the proof are 
\begin{enumerate}
	\item[(a)] an estimate on deviation of the Green function from a linear function,
	\item[(b)] an estimate on approximations to distributional derivatives of the parabolic measures,
	\item [(c)]and a local and quantitative criterion for anisotropic $A_{\infty}$-weights in terms of Carleson conditions. 
\end{enumerate}
This general idea corresponds to the sequence of papers \cite{DLM}, \cite{BTZ2} and \cite{BES-DKP} from the elliptic setting;
but the first and the third part of the proof exhibit significant differences 
due to the evolutionary nature of the problem.

Part (a) is carried out in Sections \ref{Estimates for constant coefficient operators} -
\ref{Estimates for Green function}.
The main difficulty is the time-directedness of the Harnack inequality in the context of general solutions. Already the proof for non-degeneracy of the Dirichlet energy 
for constant coefficient operators in Lemma~\ref{dec-lb-jande.lem} requires a completely novel argument and the perturbation argument for variable coefficients in Lemma~\ref{diffOK.lem} would have not been possible, had we been not able to sweep away one of the essential technical difficulties pointed out in \cite{DLM}.
Our proof could hence be used to improve even the elliptic estimate (compare to Lemma~3.19 in \cite{DLM}).

Part (b) consists in using the Riesz formula for solutions in order to bound approximants to distributional derivatives of the parabolic measure 
by quantities related to the Green function. This is carried out in Section \ref{Estimates for the parabolic measure}
and follows the lines of thought of \cite{BTZ2} with few modifications.

Part (c) constitutes our second main result. It is presented in Section \ref{anisotropic weights} and should be useful in different contexts. Our Carleson condition is quantitative in the spirit of \cite{BES-DKP} but without global doubling as a background hypothesis on the weight. 
Providing a `Carleson test' that is amenable for weights that are doubling only locally is important, as parabolic measures fall precisely into this class. To this end, we renounce the algebraic properties of the heat kernel in  \cite{FKP, BES-DKP}
to be able to directly work with approximations of identity based on compactly supported test functions.
Even though the results in \cite{FKP} also include a characterization based on compactly supported test functions,
their argument still uses representation through heat kernels and thus global doubling seems to be necessary.

\subsection{Background on parabolic measures}

Let us provide some additional context from the extensive literature related to parabolic measure. 

Based on Dahlberg's result~\cite{DahlbergRH2} that harmonic measure above a Lipschitz graph satisfies a reverse H\"older inequality with exponent $p=2$, R. Hunt conjectured (see \cite[p.~2]{MR563542}) that caloric measure above a parabolic Lipschitz graph is locally an $A_\infty$-weight and therefore absolutely continuous with respect to surface measure. This turned out to be untrue \cite{KW}. Only very recently, it was shown that caloric measure above a parabolic Lipschitz graph is in $A_\infty$ if and only if the function defining the graph has a half-order time derivative in the parabolic $BMO$-space \cite{BHMN}. One direction was known for some time and is due to Lewis and Murray \cite{LewMur} and the same condition on the half-order time derivative had already been identified as a natural one from the context of parabolic singular integrals, boundary value problems and Layer potentials in \cite{Hof-Commchar, H-SIO-Duke, HL-Ann, HL-JFA, Mur-comm}. Moreover, the true analog of Dahlberg's result (density with reverse H\"older exponent $p=2$) requires smallness of the half-order time derivative \cite{HL-Ann}. 

Contrarily to the elliptic setting~\cite{JK-sym}, the most obvious flattening pull-back from a parabolic graph domain to the half-space cannot be used because it fails to be Lipschitz in the $t$-variable, and the more elaborate smoothed pullback of Dahlberg--Kenig--Stein~\cite{Dahlpullback} introduces a singular drift term, even when starting from the heat equation~ \cite{HL-Ann,HL-Mem,HL-JFA}. Moreover, if in the situation of the Dahlberg--Kenig--Stein pullback the half-order time derivative for the graph function is in parabolic $BMO$ and the original coefficients $A$ and $B$ in the graph domain satisfy the (strong) Dahlberg--Kenig--Pipher condition (supremum of $|\partial_t A|^2 \delta^3$, $|\nabla A|^2 \delta$ and $|B|^2 \delta$ on Whitney regions are all Carleson measures~\cite{HL-JFA}), then so do the pulled-back coefficients. In particular, the pull-backs are operators of type \eqref{Ldefine.eq} with a stronger oscillation condition than in the present paper.

For operators of type $\cL$ in the upper-half-space, Hofmann and Lewis \cite{HL-Mem} proved that the (strong) Dahlberg--Kenig--Pipher condition implies that the associated parabolic measure is an $A_\infty$-weight. Subsequently, $L^p$-solvability of the Dirichlet problem for $p$ close or equal to $1$ in parabolic graph domains (with small half order time derivative measured in $BMO$) were shown by Dindo\u{s}--Petermichl--Pipher \cite{DPP} and for $p$ close to $1$ in rougher domains by Dindo\u{s}--Dyer--Hwang \cite{DDH}. 

In this regard, the main contribution of the present work is the fist qualitative small constant $A_\infty$-results and the use of the (weak  a.k.a $L^2$) Dahlberg--Kenig--Pipher condition on the coefficients.
We conjecture that a direct application of our results here (along with tracking constants) will give small constant $A_\infty$-results in the graph case and that --- less directly and following the work of \cite{DLM2} ---  it may be possible to obtain small constant $A_\infty$-results in rougher settings for operators satisfying a weak Dahlberg-Kenig-Pipher condition.

\noindent \textbf{Acknowledgment.}
The first author is supported by the Simons Foundation's Travel Support for Mathematicians MPS-TSM-00959861.
The second author was supported by Generalitat de Catalunya through the grant 2021-SGR-00087 and by the Spanish State Research Agency MCIN/AEI/10.13039/501100011033, Next Generation EU and by ERDF “A way of making Europe” through the grants RYC2021-032950-I, RED2022-134784-T, PID2021-123903NB-I00 and the Severo Ochoa and Maria de Maeztu Program for Centers and Units of Excellence in R\&D, grant number CEX2020-001084-M.
The authors would like to thank Steve Hofmann for some helpful conversations concerning \cite{HL-Mem}.
\section{Preliminaries}
\label{Preliminaries}
\subsection{General notation}\label{General notation.sec}

Throughout, we work in space time $\ree$ with $n \in \mathbb{N}$ satisfying $n \ge 2$.  For notational convenience, given any two integers $n_1,n_2 \ge 1$, we identify $\re^{n_1} \times \re^{n_2}$ with $\re^{n_1+n_2}$ so that 
\[
\ree \coloneqq \bigl\{ (t,x,\lambda): t \in \R, \ x \in \R^{n-1}, \ \lambda \in \R \bigr\} .
\] 
We define the parabolic distances by setting for $z,w \in \R^{m}$ with $m \ge 2$,
\[
d_m(z,w) \coloneqq \max \bigl( |z_1-w_1|^{1/2} , \max_{1 < i \le m} |z_i-w_i| \bigr).
\]
This definition will be used with $m \in \{n,n+1\}$. Given $z \in \ree$ and $r > 0$, we define the parabolic cylinder with radius $r$ and center $x$ as 
\[
Q(z,r) \coloneqq \bigl\{ w \in \ree:   d_{n+1}(z,w) < r \bigr\} .
\]
The parabolic boundary of $Q(z,r)$ is 
\[
\partial_{par} Q(z,r) \coloneqq \big \{w \in Q(z,r) : d_{1+n}(z,w)= r, w_1 \neq z_1 + r^2 \big \}
\]
and likewise the adjoint parabolic boundary $\partial_{par^*} Q(z,r)$ is obtained from the topological boundary by excluding the initial boundary $w_1 = z_1 - r^2$.
If $E$ is a measurable set with Lebesgue measure $|E| \in (0,\infty)$ and $f \in L^{1}_{loc}(\ree_{+})$,
we write
\[
\la{f}_{E} \coloneqq \fint_{E} f(z) \, dz \coloneqq \frac{1}{|E|} \int_E f(z)  \, dz.
\]
Further, given a time parameter $\tau$ and a set $E \subset \ree$, we denote the time slice by 
\[
E^{\tau} \coloneqq \bigl \{ (t,x,\lambda) \in E :  t = \tau \bigr \} .
\]
\subsection{Point-set correspondence on the upper-half-space}

Our focus will be on solutions to parabolic equations in subsets of the upper-half-space $\ree_+$.
The points $(t,x,\lambda) \in \R\times \R^{n-1} \times (0,\infty) = \ree_+$ 
are set in a one-to-one correspondence with the following geometric objects.
\begin{enumerate}
\item The $n$-dimensional parabolic cylinder or boundary cylinder is defined as 
\[
Q^{n}(t,x,\lambda) \coloneqq \bigl \{(s,y,0) \in \re^{1+n}: d_n((t,x),(s,y)) < \lambda \bigr \}.
\]
\item The parabolic Carleson region is defined as
\[
R(t,x,\lambda) \coloneqq Q^{n}(t,x,\lambda) \times (0,2\lambda).
\]
\item The parabolic Whitney region is defined as
\[
W(t,x,\lambda) \coloneqq Q^{n}(t,x,\lambda) \times (\lambda,2\lambda).
\] 
\item The forward and backward (in time) corkscrew points are defined as
\[
a^{\pm}(t,x,\lambda) \coloneqq (t\pm 2\lambda^{2},x,2\lambda). 
\]
\end{enumerate}

Given $(t,x,\lambda) \in \ree_{+}$ and symbols $Y,Z$,
we sometimes write 
\[
Y(Z(t,x,\lambda)) = Y(t,x,\lambda) .
\] 
to ease notation. Useful examples for this notation are the parabolic Carleson box $R(Q^n(t,x,\lambda)) = R(t,x,\lambda)$ above the boundary cylinder $Q^n(t,x,\lambda)$ and the corkscrew points $a^\pm(R) = a^{\pm}(R(t,x,\lambda)) = a(t,x,\lambda)$. For regions $Z=Z(t,x,\lambda)$ we also write $\theta B \coloneqq Z(t,x,\theta \lambda)$, $\theta>0$, for re-scaled regions of the same type.
\subsection{Structure of the equation}

We denote by $\nabla$ and $\div$ the gradient and divergence with respect to all variables except for the time variable.We denote the derivative with respect to the time variable by $\partial_t$.

Let $M_0 \ge 1$. A measurable function $A : \ree_{+} \to \re^{n \times n}$ is said to be $M_0$-elliptic if 
for all $z \in \ree_{+}$ and $\xi \in \rn$, 
\[ A(z) \xi \cdot \xi  \ge M_0^{-1} |\xi|^2 , \quad |A(z) \xi| \le M_0 |\xi| . \]
Let $\varepsilon_0 > 0$. A function $B : \ree_{+} \to \rn$ is said to be an $\varepsilon_0$-small drift if 
for all $z \in \ree_{+}$,
\[ z_{n+1} |B(z)| \le \varepsilon_0,\]
We call 
\[\cL = \partial_t - \div(A \nabla \, \bigdot) + B \cdot \nabla\]
an $(M_0,\varepsilon_0)$-parabolic operator if $A$ is $M_0$-elliptic and $B$ is $\varepsilon_0$-small. The adjoint of an $(M_0,\varepsilon_0)$-parabolic operator is formally written as 
\[
\cL^{*} = -\partial_t - \div (A^* \nabla \, \bigdot) - \div (B \, \bigdot).
\]
Once the parameters $M_0$ and $\varepsilon_0$ are specified,
we call $\cL$ and $\cL^{*}$ briefly parabolic and adjoint parabolic operators.
If $\varepsilon_0 = 0$,  we call the $(M_0,\varepsilon_0)$-parabolic operator drift free.
\subsection{Weak solutions}\label{subsec.weak.sol}

Let $\cL$ be an $(M_0,\varepsilon_0)$-parabolic operator. Given an open cube $Q  \subset \ree_+$ and source terms $f \in L_{loc}^2(Q)$ and $F \in L_{loc}^2(Q; \R^n)$, we say that $u$ is a weak solution to $\cL u = f + \div F$ in $Q$ if $u  \in L_{loc}^2(I ; W_{loc}^{1,2}( \Sigma)) \cap C_{loc}(I ; L_{loc}^2( \Sigma))$, where $Q = I \times \Sigma$ with $I$ corresponding to the time variable, and 
\begin{align*}
	\int_Q - u \partial_t \varphi + A \nabla u \cdot \nabla \varphi + (B \cdot \nabla u) \varphi \, dz = 0 \qquad (\varphi \in C_c^\infty(Q)).
\end{align*}
The adjoint equation $\cL^* u = f + \div F$ has an analogous interpretation.  We say that $u$ is a global weak solution in $Q$ if the `loc-subscripts' above can be dropped.  

The definitions of weak solutions follow the classical references \cite{MR0435594, HL-Mem}.  Since $B$ is $\varepsilon_0$-small, we can also write $Bu, B \cdot \nabla u  \in L_{loc}^2(Q)$ as source terms on the right-hand side and conclude from \cite[Theorem~4.2]{ABES_JMPA} that for local solutions the a priori requirement $C_{loc}(I ; L_{loc}^2( \Sigma))$ is not needed but follows from the equation. 

If $Q = R(t,x,\lambda)$ is a Carleson region, we usually work with global weak solutions to $\cL u = 0$ or $\cL^* u = 0$ that satisfy `$u=0$ on $Q^n(t,x, \lambda)$'. This should be understood as $u(s,\bigdot) = 0$ in the Sobolev sense for a.e.\ $s \in I$. In this scenario, for adjoint solutions, we have $Bu \in L^2(Q)$, since the one-dimensional Hardy inequality yields
\begin{align*}
	\int_Q |B u|^2 \, d z 
	\leq \eps_0 \iint_{Q^n(t,x,\lambda)}  \int_0^{\lambda^2} \frac{|u(s,y,\mu)|^2}{\mu^2} \, d \mu  \, d s \, d y
	\leq 4 \varepsilon_0	\int_Q |\partial_\lambda u|^2 \, d z .
\end{align*}
This observation will be used frequently in the following. Sometimes we impose the stronger condition that $u$ is continuous up to $Q^n(t,x, \lambda)$ and vanishes thereon; `$u$ vanishes continuously on $Q^n(t,x,\lambda)$' for short. 
\subsection{Carleson measures and energies}
We impose additional smoothness on the coefficients of the differential operators in terms of a Carleson measure condition on the oscillation of $A$ and the size of $B$.
\begin{definition}[Carleson Measure]
Given a measure $\mu$ on $\ree_{+}$ and a set $E \subset \ree_{+}$, we write 
\[
\no{\mu}_{\mathcal{C}(E)} \coloneqq \sup_{R' \subset E} \frac{\mu(R')}{|Q^{n}(R')|},
\]
where the supremum is taken over all parabolic Carleson regions $R' \subset E$.
If $\no{\mu}_{\mathcal{C}(E)}< \infty$, we call $\mu$ a parabolic Carleson measure in $E$ and $\no{\mu}_{\mathcal{C}(E)}$ its parabolic Carleson constant or norm.
If $E = \ree_{+}$, then we plainly call $\mu$ a parabolic Carleson measure. 
\end{definition}
\begin{definition}[Weak-DKP condition]\label{wdkp.def}
Let $A : \ree_{+} \to \re^{n \times n}$ and $B : \ree_{+} \to \re^{n}$ be locally integrable functions.
Let $C \in \{W,R\}$. Define for $z \in \ree_{+}$ the quantities
\begin{align*}
\alpha_{A}^{C}(z) &\coloneqq \left( \fint_{C(z)} |A(w) - \la{A}_{C(z)} |^2 \, dw \right)^{1/2},\\
\alpha_{B}^{C}(z) &\coloneqq \left( \fint_{C(z)} |B(w)|^2 w_{n+1}^{2} \, dw \right)^{1/2},\\
\alpha_{A,B}^{C}(z) &\coloneqq \alpha_{A}^{C}(z) + \alpha_{B}^{C}(z),
\end{align*}
and the associated measures
\begin{align*}
\nu_{A,B}^{C} \coloneqq \bigl(\alpha_{A,B}^{C}(z)\bigr)^2 \frac{dz}{z_{n+1}},\quad  \nu_{A}^{C} \coloneqq \bigl(\alpha_{A}^{C}(z)\bigr)^2 \frac{dz}{z_{n+1}}\quad \text{ and } \quad \nu_{B}^{C} \coloneqq \bigl(\alpha_{B}^{C}(z)\bigr)^2 \frac{dz}{z_{n+1}}.
\end{align*}
We say that $(A,B)$ satisfies a weak DKP-condition in a set $E \subset \ree_{+}$ if  
\[
\no{\nu_{A,B}^{W}}_{\mathcal{C}(E)} < \infty .
\]
We say that $(A,B)$ satisfies a weak DKP-condition if  
it satisfies a weak DKP condition with $E= \ree_{+}$.
\end{definition}
The following proposition shows that the Carleson condition above improves from Whitney to Carleson regions. Hence, the two conditions are in fact equivalent and we will drop the index $C$ from the notation. 

\begin{proposition}
\label{whitney-carleson.prop}
Let $M_0$ and $\varepsilon_0$ be given. Then there exists a constant $C$ such that the following holds.
Let $A$ be $M_0$-elliptic, $B$ be an $\varepsilon_0$-small drift and let $Q^n$ be a boundary cylinder.
Then 
\[
\no{\alpha_{A,B}^{R}}_{L^\infty(R(Q^n))}^2 + \no{\nu_{A,B}^{R}}_{\mathcal{C}(Q^n)} \le C \no{\nu_{A,B}^{W}}_{\mathcal{C}(9Q^n)}.
\] 
\end{proposition}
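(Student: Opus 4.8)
The plan is as follows. Abbreviate $N\coloneqq\no{\nu_{A,B}^{W}}_{\mathcal{C}(9Q^n)}$, fix $Q^n=Q^n(t_0,x_0,\lambda_0)$, and read $\no{\,\cdot\,}_{\mathcal{C}(Q^n)}$ as the Carleson norm over the box $R(Q^n)$. Since $\alpha_{A,B}^{R}\le\alpha_A^R+\alpha_B^R$ pointwise, it is enough to bound, separately for the symbol $A$ and for the drift $B$, both $\no{\alpha_\bullet^R}_{L^\infty(R(Q^n))}^2$ and $\no{\nu_\bullet^R}_{\mathcal{C}(Q^n)}$ by $CN$; the $L^\infty$-bound is a separate matter, not a consequence of the Carleson one. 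Two elementary ``engines'' carry the argument. \emph{(i) Averaging.} For $h\in L^1_{loc}(\ree_+)$ with $h\ge 0$ and a boundary cylinder $Q'$, Fubini's theorem together with the volume identities $|R(z)|\simeq|W(z)|\simeq z_{n+1}^{\,n+2}$ and $|Q^n(t,x,r)|\simeq r^{\,n+1}$ give, with a dimensional constant $C_0$,
\[
\int_{R(Q')}\Bigl(\fint_{R(z)}h\Bigr)\frac{dz}{z_{n+1}}\;\lesssim\;\int_{R(C_0Q')}\frac{h(w)}{w_{n+1}}\,dw\;\lesssim\;\int_{R(C_0Q')}\Bigl(\fint_{W(z)}h\Bigr)\frac{dz}{z_{n+1}},
\]
the off-diagonal contributions reducing to convergent one-dimensional integrals $\int_a^\infty s^{-2}\,ds$ (and, in a variant with an extra factor $1+\log_+(z_{n+1}/w_{n+1})$ in the integrand, to the equally convergent $\int_a^\infty s^{-2}(1+\log_+ s)\,ds$); likewise $\fint_{R(z_0)}h\lesssim z_{0,n+1}\,|R(z_0)|^{-1}\,\nu^W_h(R(z_0))$, where $\nu^W_h\coloneqq\bigl(\fint_{W(\cdot)}h\bigr)\,dz/z_{n+1}$. \emph{(ii) Unfolding.} Writing $\fint_{W(z)}|A-\la{A}_{W(z)}|^2=\tfrac12\fint_{W(z)}\fint_{W(z)}|A(a)-A(b)|^2$ and applying Fubini to the pair $(a,b)$, one finds that $\nu_A^W(R(E))$ is comparable to $\iint|A(a)-A(b)|^2\,a_{n+1}^{-(n+3)}\,da\,db$, the integration running over pairs $(a,b)$ with $a\in R(E)$ and $a,b$ in a common parabolic Whitney region. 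Thus any estimate that produces a ``close-pair'' integral of this shape, over a fixed dilate of $R(E)$, is controlled by $\no{\nu_A^W}_{\mathcal{C}}$.

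The drift term is immediate: with $h(w)\coloneqq|B(w)|^2w_{n+1}^2$ one has \emph{exactly} $\alpha_B^W(z)^2=\fint_{W(z)}h$ and $\alpha_B^R(z)^2=\fint_{R(z)}h$, so the $L^\infty$-bound for $\alpha_B^R$ is the pointwise statement in (i) (with $\nu^W_h=\nu_B^W$, of Carleson norm $\le N$), and $\no{\nu_B^R}_{\mathcal{C}(Q^n)}\le CN$ is the displayed chain, whose right-hand side equals $\nu^W_B(R(C_0Q'))\le N\,|C_0Q'|\lesssim N\,|Q'|$ — here $C_0Q'\subset 9Q^n$. For the symbol $A$, regularize: for $w\in\ree_+$ set $\widehat w\coloneqq(w_1,\dots,w_n,\tfrac23w_{n+1})$, so that $w\in W(\widehat w)$ and $W(\widehat w)$ is ``the Whitney region at $w$''. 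Since $\la{A}_{R(z)}$ minimizes $\fint_{R(z)}|A-c|^2$,
\[
\alpha_A^R(z)^2\le\fint_{R(z)}|A-\la{A}_{W(z)}|^2\le 2\fint_{R(z)}|A(w)-\la{A}_{W(\widehat w)}|^2\,dw\;+\;2\fint_{R(z)}|\la{A}_{W(\widehat w)}-\la{A}_{W(z)}|^2\,dw .
\]
For the first term, $|A(w)-\la{A}_{W(\widehat w)}|^2\le\fint_{W(\widehat w)}|A(w)-A(v)|^2\,dv$, so it is an integral over pairs in a common Whitney region; Fubini turns it into $\lesssim z_{n+1}\,|R(z)|^{-1}\iint|A(a)-A(b)|^2\,a_{n+1}^{-(n+3)}\,da\,db$ over close pairs in a dilate of $R(z)$, which by (ii) is $\lesssim z_{n+1}\,|R(z)|^{-1}\,\nu_A^W(\theta R(z))\lesssim N$ (using $\nu_A^W(\theta R(z))\lesssim N z_{n+1}^{\,n+1}$), and whose Carleson integral over $R(Q')$ is handled by engine (i).

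The real work is the second term, the vertical telescoping from the bottom of $R(z)$ up to the top Whitney region $W(z)$. Join $W(\widehat w)$ to $W(z)$ by a chain of $\lesssim 1+\log_+(z_{n+1}/w_{n+1})=:L(w)$ parabolic Whitney regions, each overlapping the next in a set of comparable measure and all contained in a fixed dilate of $R(z)$; since the difference of the $A$-averages of two such overlapping regions is $\lesssim$ the sum of their oscillations $\alpha_A^W$, Cauchy--Schwarz along the chain gives
\[
|\la{A}_{W(\widehat w)}-\la{A}_{W(z)}|^2\;\lesssim\;L(w)\sum_{\zeta\in\mathrm{chain}(\widehat w\to z)}\alpha_A^W(\zeta)^2 .
\]
Integrating in $w$ over $R(z)$ and applying Fubini in the $L$-weighted form of (i): a Whitney region $W(\zeta)$ of height $\mu$ is charged only by the $w$'s below it and contributes $\alpha_A^W(\zeta)^2\,\mu^{\,n+2}\,(1+\log(z_{n+1}/\mu))$; unfolding $\alpha_A^W(\zeta)^2\,\mu^{\,n+2}\simeq\mu^{-(n+2)}\int_{W(\zeta)^2}|A(a)-A(b)|^2\,da\,db$ and invoking the crucial cancellation $\mu\,(1+\log(z_{n+1}/\mu))\lesssim z_{n+1}$ (valid for $\mu\lesssim z_{n+1}$, i.e.\ on $R(z)$), the sum becomes $\lesssim z_{n+1}\iint|A(a)-A(b)|^2\,a_{n+1}^{-(n+3)}\,da\,db$ over close pairs in a dilate of $R(z)$, hence $\lesssim z_{n+1}\,\nu_A^W(\theta R(z))\lesssim N z_{n+1}^{\,n+2}\simeq N\,|R(z)|$, so the second term is $\lesssim N$ as well, with the matching Carleson bound coming out of the same computation. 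The main obstacle is exactly this logarithmic loss in the telescoping: the estimate closes only because the volume factor $|R(z)|\simeq z_{n+1}^{\,n+2}$, weighed against the Carleson mass $\nu_A^W(\theta R(z))\lesssim N z_{n+1}^{\,n+1}$, beats it — and the chain must be built from honest Whitney regions joined through their intersections, so that consecutive differences are controlled by $\alpha_A^W$ itself rather than by an enlarged oscillation. What remains is bookkeeping: every dilation constant ($C_0$, and the $\theta$'s above) is dimensional, and one checks that for $z\in R(Q^n)$, $Q'\subset Q^n$ the enlarged regions $R(C_0Q')$, $\theta R(z)$, $\dots$ all lie inside $R(9Q^n)$ — which is precisely the room the factor $9$ provides.
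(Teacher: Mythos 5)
Your proposal is correct in substance and follows what the paper intends: for the drift term your Fubini argument (using that $|B(w)|^2w_{n+1}^2$ does not depend on the external point, so Whitney and Carleson averages compare after one application of Fubini, with the convergent $\int_a^\infty s^{-2}\,ds$ handling the vertical integration) is literally the paper's proof, and for the oscillation of $A$ the paper does not write an argument at all --- it cites the elliptic self-improvement lemma of \cite{DLM} as ``a general fact about Carleson measures'' --- while you reconstruct exactly that argument: compare $A(w)$ to the Whitney average at the height of $w$, telescope vertically through overlapping Whitney regions, accept the factor $1+\log_+(z_{n+1}/w_{n+1})$ from Cauchy--Schwarz along the chain, and absorb it via $\mu(1+\log(z_{n+1}/\mu))\lesssim z_{n+1}$ for the $L^\infty$ bound and via the convergence of $\int_a^\infty s^{-2}(1+\log_+ s)\,ds$ for the Carleson bound. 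That you treat the $L^\infty$ bound and the Carleson bound as separate statements, and that you reserve the factor $9$ for the dilations, also matches the statement being proved.

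One step is asserted more strongly than you justify it: the two-sided ``comparability'' in your engine (ii). The direction you actually need --- a close-pair integral $\iint|A(a)-A(b)|^2a_{n+1}^{-(n+3)}\,da\,db$, with $(a,b)$ ranging over the \emph{full} square $W(\zeta)\times W(\zeta)$ of a Whitney region, dominated by $\no{\nu_A^{W}}_{\mathcal{C}}$ --- is not a pair-by-pair estimate: if $a_{n+1}$ is near $\zeta_{n+1}$ and $b_{n+1}$ near $2\zeta_{n+1}$ (or the horizontal separation is near the diameter), the set $\{z: a,b\in W(z)\}$ is thin, so its measure is not $\gtrsim a_{n+1}^{n+2}$. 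Equivalently, what you need is $\alpha_A^{W}(\zeta)^2\lesssim \fint_{U(\zeta)}\alpha_A^{W}(z)^2\,dz$ over a neighborhood $U(\zeta)$ of measure $\simeq \zeta_{n+1}^{n+2}$, and since no Whitney region other than $W(\zeta)$ itself contains $W(\zeta)$, this requires a short extra argument: cover $\overline{W(\zeta)}$ by a bounded chain of Whitney regions with interior slack, bound the oscillation over $W(\zeta)$ by the sum of the oscillations over the chain using the overlaps, observe the cover and overlaps persist under small perturbations of the chain's centers, and average over those perturbations. (For the pairs produced by your first term, the $\tfrac23$-factor in $\widehat w$ already provides the needed slack, so only the telescoping term is affected.) This repair is standard and purely multiplicative --- which matters here, since $\no{\nu_{A,B}^{W}}_{\mathcal{C}}$ may be arbitrarily small and no additive error is allowed --- so it is a gap in exposition rather than in the approach, but as written the word ``comparable'' in engine (ii) papers over it.
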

\begin{proof}
The estimate for the $\alpha_A$ follows by the very same argument that proves \cite[Lemma~3.16 \& Remark~3.22]{DLM} in the elliptic setting and is a general fact about Carleson measures.  The estimate for $\alpha_B$ is even easier, since this quantity is an average of a function that is independent of the external variable $z$. Indeed, introducing additional averages over Whitney regions $\widetilde{W}(t,x,\lambda) \coloneqq Q^n(t,x,\lambda) \times (\tfrac{\lambda}{2},\lambda)$, we conclude by Fubini's theorem that
\begin{align}\label{whitney-carleson.prop.eq1}
\begin{split}
	\fint_{R(2Q^n)} |B(w)|^2 w_{n+1}^2 \, dw
	&= \fint_{R(2Q^n)} |B(w)|^2 w_{n+1}^2  \biggl( \fint_{\widetilde{W}(y)} dz \biggr) \, dw \\
	&\leq C \fint_{R(6Q^n)} \biggl(\fint_{W(z)} |B(w)|^2 w_{n+1}^2\, dw \biggr) \, dz.
\end{split}
\end{align}
The right-hand side is bounded from above by $C \|\nu_{B}^W\|_{\mathcal{C}(6Q^n)}$ and the left-hand side is bounded from below by $C \alpha_{B}^R(z)^2$ for every $z \in R(Q^n)$. Thus, the $L^\infty$-bound for $\alpha_B^R$ follows. By a similar use of Fubini's theorem we obtain
\begin{align*}
	\int_{R(Q^n)} \bigl(\alpha_B^R(z)\bigr)^2 \, \frac{dz}{z_{n+1}} 
	&\leq C \int_{R(3Q^n)} |B(w)|^2 w_{n+1}^2 \biggl(\int_{w_{n+1}/2}^{\infty} \, \frac{d z_{n+1}}{z_{n+1}^{2}} \biggr) \, dw \\
	&= C  \int_{R(3Q^n)} |B(w)|^2 w_{n+1} \, d w
\intertext{and repeating the argument in \eqref{whitney-carleson.prop.eq1} for this integral, we find}
	&\leq C \int_{R(9Q^n)} \biggl(\fint_{W(z)} |B(w)|^2 w_{n+1} \, dw \biggr) \, dz \\
	&\leq C \int_{R(9Q^n)} \bigl(\alpha_B^W(z)\bigr)^2 \, \frac{dz}{z_{n+1}}.
\end{align*}
This estimate is valid for every boundary cube, so $\no{\nu_{B}^{R}}_{\mathcal{C}(Q^n)} \le C \no{\nu_{B}^{W}}_{\mathcal{C}(9Q^n)}$.
\end{proof}
Finally, we reserve the following notation for the $L^{2}$-norms of gradients, their linear approximations and their ratio.

\begin{definition}[Energies $J$ and $E$]
\label{JandEdefs.def}
For measurable functions $u$ on $\ree_+$ with locally integrable gradient we define 
\begin{align*}
J_{u}(t,x,\lambda) &\coloneqq \fint_{R(t,x,\lambda)} \bigl|\nabla (u -  z_{n+1} \la{\partial_{n+1}u}_{R(t,x,\lambda)} )\bigr|^{2} \, dz ,\\
E_u(t,x,\lambda) &\coloneqq \fint_{R(t,x,\lambda)} \bigl|\nabla u(z)\bigr|^{2} \, dz,\\
\beta_u(t,x,\lambda) &\coloneqq \left(\frac{J_{u} (t,x,r)}{E_{u} (t,x,r)}\right)^{1/2}. 
\end{align*}
\end{definition}

\subsection{Basic estimates for weak solutions}
Next, we recall some basic estimates for solutions to parabolic equations. These estimates assume varying simplifications in the structure of the equation,
and will be needed in respective generality. 

The following Caccioppoli inequality is a special case of  Lemma~1 on p.~623 in \cite{MR0435594}. Indeed, by the observation in Section~\ref{subsec.weak.sol}, the singular drift can first be included in the source term $F$ for \cite{MR0435594} and then be absorbed to the left-hand side if $\varepsilon_0$ is small enough.
\begin{lemma}[Caccioppoli inequality]
\label{caccioppoli.lem}
Let $M_0$ be given. Then there exists $\varepsilon_0, C > 0$ such that for any $c \in (1,2]$ the following holds. 
Let $Q = Q(z,r) \subset \ree_{+}$ be a parabolic cylinder, let $\cL$ be $(M_0,\varepsilon)$-parabolic, $F \in L^{2}_{loc}(\overline{\ree_{+}}; \mathbb{R}^{n})$ and $u$ be a solution to
\begin{align*}
  \cL u  = \div F \quad \text{or} \quad \cL^{*}u  = \div F \quad \text{in } c^2 Q \cap \ree_{+}.
\end{align*}
Then
\begin{multline*}
	\sup_{\tau} \fint_{Q^{\tau}  } \frac{|u(w)|^{2}}{r^{2}} \, dw + \fint_{Q } |\nabla u(w)|^{2} \,dw 
	\\
	\le \frac{C}{(c-1)^{2}} \fint_{cQ \cap \ree_{+} } \frac{|u(w)|^{2}}{r^{2}} \, dw + C \fint_{cQ  \cap \ree_{+} } |F(w)|^{2} \, dw.
\end{multline*}
in either of the following two scenarios:
\begin{enumerate}
  \item Interior case \& no drift: $cQ \subset \ree_{+}$ and $\varepsilon = 0$ .
  \item Boundary case \& Dirichlet conditions: $z \in \partial \ree_{+}$ and $u=0$ on $cQ  \cap \partial \ree_{+}$.
\end{enumerate}
\end{lemma}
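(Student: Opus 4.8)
The plan is to reduce both scenarios to the standard interior/boundary Caccioppoli estimate for the divergence-form parabolic operator $\partial_t - \div(A\nabla\,\bigdot)$ with a general divergence source term, as stated in Lemma~1 on p.~623 of \cite{MR0435594}, and then absorb the drift. First I would treat the drift: since $u$ solves $\cL u = \div F$ with $\cL = \partial_t - \div(A\nabla\,\bigdot) + B\cdot\nabla$, and $\varepsilon_0$-smallness gives $z_{n+1}|B(z)|\le\varepsilon_0$, I rewrite the equation as $(\partial_t - \div(A\nabla\,\bigdot))u = \div F - B\cdot\nabla u$. In the boundary case with Dirichlet data, write $B\cdot\nabla u = \div(u\,B) - u\,\div B$, but it is cleaner to keep $B\cdot\nabla u$ as an $L^2_{loc}$ right-hand side term directly, using the one-dimensional Hardy inequality exactly as in Section~\ref{subsec.weak.sol}: on $cQ\cap\ree_+$ one has
\[
\fint_{cQ\cap\ree_+} |B\cdot\nabla u|^2\,dw \le \varepsilon_0^2 \fint_{cQ\cap\ree_+} \frac{|\nabla u|^2}{w_{n+1}^2}\cdot w_{n+1}^2 \,dw,
\]
which is not yet useful; instead I exploit that $u$ vanishes on $cQ\cap\partial\ree_+$ so that the Hardy inequality converts $\int |u/w_{n+1}|^2$ into $\int|\partial_{n+1}u|^2$, as displayed in the excerpt. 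The correct route is therefore: treat $Bu$ (not $B\cdot\nabla u$) as a source by writing $B\cdot\nabla u = \div(uB)$ when $\div B$ contributes nothing after testing — but since $\div B$ need not vanish, the honest approach is to keep $B\cdot\nabla u$ on the right and run the energy argument by hand.

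Concretely, the main step is to run the Caccioppoli testing argument with the cutoff $\varphi = u\eta^2$ where $\eta$ is a parabolic cutoff, $\eta\equiv 1$ on $Q$, $\supp\eta\subset cQ$, $|\nabla\eta|\lesssim 1/((c-1)r)$, $|\partial_t\eta|\lesssim 1/((c-1)^2 r^2)$. Testing the weak formulation gives, after the usual manipulations,
\[
\sup_\tau \fint_{Q^\tau}\frac{|u|^2}{r^2}\,dw + \fint_Q |\nabla u|^2\,dw \le \frac{C}{(c-1)^2}\fint_{cQ\cap\ree_+}\frac{|u|^2}{r^2}\,dw + C\fint_{cQ\cap\ree_+}|F|^2\,dw + C\fint_{cQ\cap\ree_+}|B\cdot\nabla u|\,|u|\,dw.
\]
The cross term is handled by Cauchy--Schwarz and the Hardy inequality. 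In the interior case (scenario (1)) $\varepsilon=0$ so the $B$ term is absent and the estimate is immediate from \cite{MR0435594}. In the boundary Dirichlet case (scenario (2)), one bounds
\[
\fint_{cQ\cap\ree_+}|B\cdot\nabla u|\,|u|\,dw \le \varepsilon_0 \fint_{cQ\cap\ree_+}\frac{|u|}{w_{n+1}}\,|\nabla u|\,dw \le \delta\fint_{cQ\cap\ree_+}|\nabla u|^2\,dw + \frac{\varepsilon_0^2}{4\delta}\fint_{cQ\cap\ree_+}\frac{|u|^2}{w_{n+1}^2}\,dw,
\]
and then by the one-dimensional Hardy inequality (valid because $u(s,\bigdot,0)=0$) the last integral is controlled by $C\fint_{cQ\cap\ree_+}|\partial_{n+1}u|^2\,dw \le C\fint_{cQ\cap\ree_+}|\nabla u|^2\,dw$. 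Choosing $\delta$ and $\varepsilon_0$ small enough that $C(\delta + \varepsilon_0^2)$ is less than half the ellipticity lower bound, the whole $B$-contribution is absorbed into the left-hand side gradient term. The adjoint equation $\cL^*u=\div F$ is handled identically: the drift now appears as $-\div(Bu)$, which one tests against $u\eta^2$ to produce $\int Bu\cdot\nabla(u\eta^2)$, giving terms $\int|B||u|^2|\eta||\nabla\eta| $ and $\int|B||u||\eta|^2|\nabla u|$, both absorbed by the same Hardy-plus-Cauchy--Schwarz mechanism using $z_{n+1}|B|\le\varepsilon_0$ and the vanishing trace.

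The main obstacle is bookkeeping rather than conceptual: one must verify that the smallness threshold $\varepsilon_0$ can be chosen uniformly in $c\in(1,2]$ (it can, since the $B$-terms are absorbed into the leading gradient term whose coefficient does not degenerate as $c\to 1$ — only the $|u|^2/r^2$ terms carry the $(c-1)^{-2}$ blow-up), and that the Hardy inequality is applicable on $cQ\cap\ree_+$, which requires $cQ$ to reach down to $\partial\ree_+$; this is exactly the hypothesis $z\in\partial\ree_+$ together with $u=0$ on $cQ\cap\partial\ree_+$ in scenario (2). A secondary technical point is that, a priori, $u\in L^2_{loc}(I;W^{1,2}_{loc}(\Sigma))$ only, so the testing against $u\eta^2$ must be justified by the standard Steklov-averaging (mollification in time) argument; this is routine and I would simply cite \cite{MR0435594} for it, noting that the presence of the lower-order term $B\cdot\nabla u\in L^2_{loc}$ does not affect the time-regularization since it is an admissible right-hand side.
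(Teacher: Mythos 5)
Your overall route coincides with the paper's: the paper also reduces to Lemma~1 on p.~623 of \cite{MR0435594} (the drift-free, divergence-form Caccioppoli estimate with source term) and then absorbs the drift using the Hardy-inequality observation of Section~\ref{subsec.weak.sol} together with smallness of $\varepsilon_0$; your by-hand testing with $\varphi=u\eta^{2}$ is just an unpacked version of this. The one step that fails as literally written is the absorption. After your displayed inequality you bound the cross term by $\delta\fint_{cQ\cap\ree_{+}}|\nabla u|^{2}\,dw+C\delta^{-1}\varepsilon_0^{2}\fint_{cQ\cap\ree_{+}}|u/w_{n+1}|^{2}\,dw$ and then claim to absorb this into the left-hand side, but the left-hand gradient term is an integral over $Q$ while the terms you want to hide live on $cQ\cap\ree_{+}$: an integral over the larger cylinder cannot be absorbed into one over the smaller. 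The repair is standard and available within your own set-up: keep the cutoff in the drift term, estimating $\int|B\cdot\nabla u|\,|u|\,\eta^{2}\,dw\le\delta\int|\nabla u|^{2}\eta^{2}\,dw+C\delta^{-1}\varepsilon_0^{2}\int|u\eta/w_{n+1}|^{2}\,dw$, apply the one-dimensional Hardy inequality to $u\eta$ (which vanishes at $w_{n+1}=0$ because $u=0$ on $cQ\cap\partial\ree_{+}$ and $\supp\eta\subset cQ$) so that the last term is controlled by $\int|\partial_{n+1}u|^{2}\eta^{2}\,dw+\int|u|^{2}|\partial_{n+1}\eta|^{2}\,dw$, and absorb the $\eta^{2}$-weighted gradient term into the $\eta^{2}$-weighted energy \emph{before} restricting the left-hand side to $Q$; alternatively, one can invoke a hole-filling/iteration lemma over intermediate cylinders between $Q$ and $cQ$. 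With this correction (and the identical fix for the adjoint term $\int Bu\cdot\nabla(u\eta^{2})\,dw$), your argument is complete, and the uniformity in $c\in(1,2]$ you discuss does hold, since the absorbed term carries no factor of $(c-1)^{-1}$ — only the terms involving $|\nabla\eta|$ and $|\partial_t\eta|$ do.
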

As a consequence, we get:

\begin{lemma}[Boundary Reverse H\"older inequality] \label{bdry-rhi-solution.lem}
Let $M_0$ be given. There exists $\varepsilon_0 , C> 0$ such that for any $c \in (1,2]$ the following holds. 
Whenever $\cL$ is $(M_0,\varepsilon_0)$-parabolic, $R = R(t,x,\lambda) \subset \ree_{+}$ is a Carleson region,
$F \in L^{2}_{loc}(\overline{\ree_{+}};\mathbb{R}^{n})$ and $u$ is a solution to
\begin{align*}
	\cL u  = \div F \quad \text{or} \quad \cL^{*}u  = \div F \quad 
\end{align*}
in $c^2 R$ such that $u=0$ on $Q^{n}(cR)$, then for $q = \frac{2(n+2)}{n}$ it follows that
\begin{multline*}
\left( \fint_{R} |u(z)|^{q} \,dz  \right)^{1/q}
  \leq \frac{C}{(c-1)} \left( \fint_{cR} |u(z)|^{2} \, dz \right)^{1/2} 
+ C \lambda  \left( \fint_{cR} |F(z)|^{2} \, dz \right)^{1/2} .
\end{multline*}
\end{lemma}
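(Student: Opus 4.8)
The plan is to combine the Caccioppoli inequality of Lemma~\ref{caccioppoli.lem} with a Sobolev–Poincaré inequality adapted to the parabolic scaling, via the classical trick of trading a higher integrability exponent against an $L^2$–control of the full gradient plus a sup-in-time control of the $L^2$-norm on time slices. Fix $c \in (1,2]$ and write $R = R(t,x,\lambda)$. First I would record the parabolic Sobolev embedding on a Carleson region: for a function $v$ with $v(s,\cdot) \in W^{1,2}$ vanishing on $Q^n(R)$ for a.e.\ $s$, one has, with $q = \tfrac{2(n+2)}{n}$,
\[
\Bigl( \fint_R |v|^q \, dz \Bigr)^{1/q}
\le C \Bigl( \sup_\tau \fint_{R^\tau} \frac{|v|^2}{\lambda^2} \, dz \Bigr)^{\theta/2}
\Bigl( \fint_R |\nabla v|^2 \, dz \Bigr)^{(1-\theta)/2} + \text{(lower order)},
\]
the homogeneous Gagliardo–Nirenberg–type inequality whose exponents are forced by parabolic scaling and the choice $q = 2 + \tfrac{4}{n}$; here the Dirichlet condition on the lateral boundary $Q^n(R)$ supplies the Poincaré-type control needed in the $\lambda$-variable. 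This is where the exponent $q = \tfrac{2(n+2)}{n}$ comes from, and one should state it as a self-contained interpolation lemma (or cite the analogous statement in \cite{HL-Mem, MR0435594}).

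Next I would apply this embedding with $v = u$ on the slightly smaller region $R$, getting the left-hand side of the lemma bounded by (a constant times) the geometric mean of $\sup_\tau \fint_{R^\tau} |u|^2/\lambda^2$ and $\fint_R |\nabla u|^2$. Both of these are exactly the quantities controlled by the right-hand side of the Caccioppoli inequality (Lemma~\ref{caccioppoli.lem}, boundary case), applied on $R \subset \tfrac{c+1}{2}R \subset cR$ with the intermediate dilation parameter $c' = \tfrac{c+1}{2} \in (1,2]$ so that $(c'-1)^{-1} \lesssim (c-1)^{-1}$. This yields
\[
\Bigl( \fint_R |u|^q \Bigr)^{1/q}
\le \frac{C}{c-1} \Bigl( \fint_{cR} \frac{|u|^2}{\lambda^2} \Bigr)^{1/2}
+ C \Bigl( \fint_{cR} |F|^2 \Bigr)^{1/2},
\]
and multiplying $F$ through we would need the scaling factor $\lambda$ to appear in front of the $F$-term; this comes out of the Caccioppoli estimate, whose source term is $\fint |F|^2$ with no $\lambda$-weight, against a left-hand side with a $\lambda^{-2}$-weight on $|u|^2$. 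Tracking the powers of $\lambda$ through the homogeneity of all three quantities gives precisely the stated form, with the $\lambda^2$-factor on the $u$-term cancelling against the $\lambda^{-2}$ inside the Caccioppoli bound and leaving $\lambda\,(\fint_{cR}|F|^2)^{1/2}$.

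The main obstacle I anticipate is establishing the parabolic Sobolev–Poincaré inequality with the correct exponent and with a genuine sup-in-time factor (rather than an $L^\infty_t L^2_x$ norm that does not match the Caccioppoli output cleanly); the subtlety is that parabolic Sobolev embeddings are not a single-time-slice statement, and one must be careful that the Dirichlet condition "$u = 0$ on $Q^n(cR)$" is used on every time slice to kill the boundary term in the $\lambda$-direction. A secondary bookkeeping point is the chain of dilations: one cannot apply Caccioppoli and the embedding on the same region, so an intermediate radius $c' = (1+c)/2$ must be inserted, and one checks $(c'-1)^{-2} \le 4(c-1)^{-2}$ to absorb the constant. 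Everything else—Fubini, Hölder to pass between the geometric mean and a sum, and the homogeneity check on powers of $\lambda$—is routine.
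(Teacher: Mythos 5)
Your plan is correct and follows essentially the same route as the paper: the paper also deduces the estimate from the parabolic (Ladyzhenskaya-type) embedding $L^\infty_t L^2_x \cap L^2_t W^{1,2}_x \hookrightarrow L^{2(n+2)/n}$, proved by slice-wise interpolation plus a slice-wise Sobolev--Poincar\'e inequality using the vanishing of $u$ on $Q^n(cR)$, and then controls both factors by the boundary Caccioppoli inequality (Lemma~\ref{caccioppoli.lem}). The only inessential difference is your intermediate dilation $c'=(c+1)/2$: it is not needed, since the embedding is applied on $R$ itself and Caccioppoli already passes from $R$ to $cR$; the $\lambda$-bookkeeping you defer works out exactly as you predict.
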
 
\begin{proof}
Pick any $p \in (q, 2n/(n-2))$ and fix $\theta \in (0,1)$ with $1/q = (1-\theta)/2 + \theta/p$.
Interpolating the slice-wise $L^{q}$-norm, we obtain 
\begin{align} \label{bdry-rhi-solution.lem.eq1}
\begin{split}
\left(\fint_{R} |u(z)|^{q} \,dz \right)^{1/q}
&\le  \left(\sup_{\tau} \fint_{R^\tau} |u(z)|^{2} \, dz \right)^{(1-\theta)/2} \\
 &\quad \times \biggl( \frac{1}{\lambda^2} \int_{0}^{\lambda^2} \biggl(\fint_{R^\tau} |u(z)|^{p} \, dz  \biggr)^{\theta q/p} \, dt \biggr)^{1/q}.
\end{split}
\end{align} 
A slice-wise Sobolev--Poincar\'e inequality yields,
\begin{align*}
	 \int_{0}^{\lambda^2} \left(\fint_{R^\tau} |u(z)|^{p} \, dz  \right)^{\theta q/p} \, dt 
&\le C \int_{0}^{\lambda^2} \left(\fint_{R^\tau} |\lambda \nabla u(z)|^{2} \, dz  \right)^{\theta q/2} \, dt \\
&\le C \lambda^{2+ \theta q/2} \left(  \fint_{R} |\nabla u(z)|^{2} \, dz    \right)^{\theta q/2},
\end{align*}
where in the second step we have used $\theta q/2 = (2/n) \cdot (1-2/p)^{-1} \geq 1$ and Jensen's inequality. Now, both factors on the left of \eqref{bdry-rhi-solution.lem.eq1} can be controlled by the Caccioppoli inequality (Lemma~\ref{caccioppoli.lem}) and the claim follows.
\end{proof}
The following boundary estimates on derivatives to drift free constant coefficient equations are folklore but difficult to trace down in the literature. We include a proof for convenience.

\begin{lemma}\label{constderests.lem}
Let $M_0$ and $c>1$ be given. For any $k \in \N_0, \gamma \in \N_0^n$ there exists a constant $C$ such that the following holds.
If $\cL_0$ is a constant coefficient $(M_0,0)$-parabolic operator, $R = R(t,x,\lambda) \subset \ree_{+}$ is a Carleson region and $u$ is a global solution to either $\cL_0 u = 0$ or $\cL_0^{*}u= 0$
in $cR$ with $u=0$ on $Q^{n}(cR)$, then
\begin{equation*}
\sup_{z \in R} |\partial_t^{k} \partial^\gamma u(z)| \le \frac{C}{\lambda^{2k+|\gamma|-1}} \left(\fint_{cR} |\nabla u (z)|^{2} \, dz \right)^{1/2} .
\end{equation*}
Moreover, $u$ vanishes continuously on $Q^{n}(cR)$.
\end{lemma}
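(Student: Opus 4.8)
The plan is to reduce the boundary estimate for constant-coefficient operators to interior estimates via an odd reflection across the boundary hyperplane $\{\lambda = 0\}$, after which classical parabolic regularity (Bernstein-type interior estimates for solutions of constant-coefficient parabolic equations) gives the pointwise bounds on derivatives in terms of an $L^2$-norm of the solution, and a final application of Caccioppoli trades the $L^2$-norm of $u$ for the $L^2$-norm of $\nabla u$. First I would normalize: by translation and parabolic scaling $(t,x,\lambda) \mapsto (t_0 + \lambda_0^2 t, x_0 + \lambda_0 x, \lambda_0 \lambda)$ it suffices to treat $R = R(0,0,1)$, the constant $C$ depending only on $M_0$, $c$, $k$, $\gamma$. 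Write $\cL_0 = \partial_t - \div(A_0 \nabla\,\bigdot)$ with $A_0$ constant and $M_0$-elliptic; split $A_0$ into its block structure relative to the normal variable $\lambda$ and the tangential variables $(t,x)$. The condition $u = 0$ on $Q^n(cR)$ means $u$ vanishes on an open piece of the boundary hyperplane, so the odd extension $\tilde u(t,x,\lambda) := u(t,x,\lambda)$ for $\lambda > 0$ and $\tilde u(t,x,\lambda) := -u(t,x,-\lambda)$ for $\lambda < 0$ is a weak solution of a constant-coefficient parabolic equation $\widetilde{\cL_0}\tilde u = 0$ across the hyperplane provided the mixed normal-tangential entries of $A_0$ are handled correctly: the standard trick is that odd reflection works cleanly for the pure second-order operator once one checks, testing against $C_c^\infty$ functions supported across $\{\lambda=0\}$ and using that $u$ and its conormal derivative match up in the distributional sense, that the reflected coefficient matrix $\widetilde{A_0}$ (obtained by flipping the sign of the off-diagonal $\lambda$-entries) is still constant and $M_0$-elliptic, and that $\tilde u$ solves the equation with $\widetilde{A_0}$ weakly in the doubled cylinder. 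The same reflection argument applies verbatim to $\cL_0^*$ since it is again a constant-coefficient drift-free operator (its adjoint part $-\div(B\,\bigdot)$ is absent when $\varepsilon_0 = 0$).

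Once $\tilde u$ is a global weak solution of a constant-coefficient, $M_0$-elliptic parabolic equation in a full parabolic neighborhood of every point of $Q^n(R)$ — say in $\tfrac{c+1}{2}R$ reflected — I would invoke interior regularity: solutions of constant-coefficient parabolic equations are $C^\infty$ in the interior and satisfy, for every multi-index and time-derivative order, the Bernstein/Schauder-type bound
\[
\sup_{Q(z_*,\rho/2)} |\partial_t^k \partial^\gamma \tilde u| \le \frac{C}{\rho^{2k+|\gamma|}} \left( \fint_{Q(z_*,\rho)} |\tilde u|^2 \right)^{1/2},
\]
which follows, e.g., by iterating the interior Caccioppoli inequality (Lemma~\ref{caccioppoli.lem}, interior case with $\varepsilon = 0$ and $F=0$) applied to $\tilde u$ and to its difference quotients/derivatives (each derivative of a constant-coefficient solution is again a solution), combined with a Sobolev embedding in enough iterations, or by the classical representation via the (smooth, Gaussian-bounded) fundamental solution of $\widetilde{\cL_0}$. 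Covering $R(0,0,1)$ by finitely many such cylinders $Q(z_*,\rho)$ with $\rho \sim (c-1)$ centered at points of $R$ — those near the boundary now sit in the interior of the doubled domain, those in the interior of $\ree_+$ are handled directly — yields $\sup_{R} |\partial_t^k \partial^\gamma u| \le C(\fint_{\frac{c+1}{2}R'} |u|^2)^{1/2}$ where $R'$ is the reflected box and by oddness $\int |\tilde u|^2 \le 2\int_{\ree_+}|u|^2$. Finally, since $u = 0$ on $Q^n(cR)$, the boundary Caccioppoli inequality (Lemma~\ref{caccioppoli.lem}, case (2), boundary case with Dirichlet condition) upgrades $(\fint_{\frac{c+1}{2}R} |u|^2)^{1/2} \le C(c-1)^{-1}(\fint_{cR} \lambda^2 |\nabla u|^2)^{1/2} = C(\fint_{cR}|\nabla u|^2)^{1/2}$ on the normalized scale, and undoing the scaling restores the factor $\lambda^{1-2k-|\gamma|}$. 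The continuity of $u$ up to $Q^n(cR)$ (with value $0$) is then immediate: $\tilde u$ is continuous (indeed smooth) across the hyperplane, being a classical solution of a constant-coefficient parabolic equation there.

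The main obstacle I expect is the reflection step — specifically, verifying that the odd extension genuinely produces a weak solution of a constant-coefficient equation across $\{\lambda = 0\}$ when $A_0$ has nonzero mixed entries coupling $\lambda$ with the tangential variables, and that the reflected matrix remains elliptic with the same constant. One has to be careful that the first-order-in-$\lambda$ cross terms and the time derivative interact correctly with the reflection (the time variable is a tangential direction, so $\partial_t$ commutes with the reflection and causes no trouble, but the first-order behavior near the hyperplane matters for the distributional identity). The standard way around this is to observe that any constant $M_0$-elliptic matrix can be written, after a constant linear change of the tangential variables that fixes $\lambda$, so that the $\lambda$-row and $\lambda$-column are diagonal — i.e., the cross terms $a_{n+1,j}$ for $j \le n$ can be removed by completing the square in a way that does not affect the boundary cylinder structure (it is a shear in the tangential directions, possibly mixing $t$ with the spatial $x$, which one must check preserves the parabolic geometry up to comparability of cylinders); for parabolic operators the relevant normalization is slightly more delicate than in the elliptic case because the time variable scales differently, but a linear shear $x \mapsto x - c\lambda$ (not touching $t$) suffices to kill $a_{n+1,j}$ for the \emph{spatial} indices, and the $a_{n+1,1}$ term (coupling $\lambda$ and $t$) is lower-order in the parabolic scaling and can be absorbed or shown to vanish by the ellipticity/symmetry of the second-order part. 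After this reduction the reflection is textbook. An alternative that sidesteps the algebra entirely is to bypass reflection and prove the derivative bounds directly by the energy/difference-quotient method using only Lemma~\ref{caccioppoli.lem} case (2): tangential difference quotients of $u$ (in $t$ and in $x$) still vanish on $Q^n(cR)$ and still solve $\cL_0(\cdot) = 0$, so the boundary Caccioppoli inequality iterates to control all tangential derivatives in $L^2$, and the single normal derivative $\partial_\lambda u$ together with the equation $\div(A_0\nabla u) = \partial_t u$ recovers $\partial_\lambda^2 u$ (using that the $(n+1,n+1)$-entry of $A_0$ is $\ge M_0^{-1} > 0$) in terms of already-controlled quantities; bootstrapping plus a one-dimensional Sobolev embedding in the $\lambda$-variable on each time-and-tangential slice then gives the $L^\infty$ bound and the continuity up to the boundary. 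I would present the reflection argument as the clean route and remark that the direct energy argument is available as a fallback.
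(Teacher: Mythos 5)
Your main route (odd reflection across $\{\lambda=0\}$ followed by interior regularity and Caccioppoli) is genuinely different from the paper's, which stays entirely in the half-space: the paper controls $\nabla\partial_{x_j}u$ by applying boundary Caccioppoli to difference quotients of $\partial_{x_j}u$, obtains $\partial_t u\in L^2$ by a cutoff localization $\tilde u=u\chi$ and the classical higher regularity theorem for parabolic equations (Evans, Ch.~7, Thm.~5), and then handles general $\partial_t^k\partial^\gamma u$ by using the equation to trade $\partial_\lambda^2$ for lower-order terms before concluding via Sobolev embedding. Your reflection approach, once set up, is arguably cleaner (interior Bernstein bounds replace the localization and the bootstrap), and it also delivers the continuity-up-to-the-boundary claim for free.

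Two points in the reflection setup are worth tightening. First, the shear $x\mapsto x-c\lambda$ is unnecessary: the odd extension works directly if you reflect the matrix by flipping the signs of the last row and column off-diagonal entries, $\tilde A_0 = D A_0 D$ with $D=\mathrm{diag}(I_{n-1},-1)$, and $\tilde A_0\xi\cdot\xi = A_0(D\xi)\cdot(D\xi)$ shows ellipticity with the same $M_0$. One verifies the weak formulation in the doubled cylinder by splitting a test function into its even and odd parts in $\lambda$; the odd part has zero trace on $\{\lambda=0\}$ and can be approximated by $C_c^\infty(\ree_+)$, so the half-space weak formulation applies, and the even part contributes nothing by parity. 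Second, your worry about an ``$a_{n+1,1}$ term coupling $\lambda$ and $t$'' is misplaced: the coefficient matrix $A_0$ is $n\times n$ and acts only on $\nabla=(\nabla_x,\partial_\lambda)$, which by the paper's convention excludes $t$; the time derivative appears only as $\partial_t$ in $\cL_0$, so there is no such cross term to worry about, and $\partial_t$ simply commutes with the reflection.

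The ``fallback'' direct energy argument, however, has a genuine gap for the time derivative. Applying boundary Caccioppoli to a time difference quotient $D_t^h u$ (which does vanish on $Q^n(cR)$ and does solve $\cL_0(\bigdot)=0$) gives $\|\nabla D_t^h u\|_{L^2(R)}\le C\|D_t^h u\|_{L^2(cR)}$, but unlike $\partial_{x_j}u$ the quantity $\partial_t u$ is not dominated by $|\nabla u|$, and Caccioppoli never produces an $L^2$ bound on $\partial_t u$ itself to feed into that inequality. Worse, if you then try to recover $\partial_\lambda^2 u$ from the equation $\div(A_0\nabla u)=\partial_t u$, you need $\partial_t u\in L^2$ as an input, so the bootstrap you describe is circular. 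This is exactly the obstruction the paper's localization step is designed to resolve: with $\tilde u = u\chi$ vanishing on the whole parabolic boundary, classical higher regularity gives $\|\partial_t\tilde u\|_{L^2}\le C\|\nabla u\|_{L^2(cR)}$ outright, and only then is Caccioppoli applied to $\partial_t u$ to get $\nabla\partial_t u$. So the reflection route is the one to run with; the fallback needs the same auxiliary step as the paper (or some other independent mechanism) to control $\partial_t u$ in $L^2$.
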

\begin{proof} 
We can assume $(t,x, \lambda) = (0,0,1)$ since the general case can be obtained by re-scaling. As the coefficients of $\cL_0$ are constant, it follows from the method of difference quotients and the Caccioppoli inequality on interior cubes that all derivatives $\partial_t^k \partial^\gamma u$ are locally square-integrable functions that solve the same equation.

In order to prove the boundary estimates, we first claim that 
\begin{align} \label{constderests.lem.eq1}
	\| \nabla \partial_{x_j} u\|_{L^2(R)} + \|\partial_t u\|_{L^2(R)}+ \| \nabla \partial_t u \|_{L^2(R)} \leq C \| \nabla u\|_{L^2(cR)}
\end{align}
for all $j=1,\ldots,n-1$. The estimate for $\nabla \partial_{x_j} u$ follows by applying the boundary Caccioppoli inequality to (difference quotients approximating) $\partial_{x_j}u$. In order to estimate $\partial_t u$, we let $\chi$ be a smooth function with $\chi = 1$ on  $R$, $\chi = 0$ on $\ree_{+} \setminus c R$ and $ \no{\partial_t \chi}_{\infty} + \no{\nabla \chi}_{\infty} \le C$ for some constant $C=C(n,c)$. The localized solution $\tilde{u} \coloneqq u \chi$ vanishes on the entire parabolic boundary of $c R$ and satisfies $\cL_0 \tilde{u} = f + \div F$ with 
\begin{align}
	\label{RHS.localized.eq}
	\begin{split}
		f &\coloneqq u \chi + (\partial_t \chi) u - A\nabla u \cdot \nabla \chi, \\
		F &\coloneqq - A(u\nabla \chi).
	\end{split}
\end{align}
Since $A$ is constant, we have
\begin{align*}
	\|f + \div F\|_{L^2(cR)} 
	\leq C \bigl(\|u\|_{L^2(c R)} + \|\nabla u\|_{L^2(c R)}\bigr)
	\leq C \|\nabla u\|_{L^2(c R)},
\end{align*}
where we have used the Poincar\'e inequality in the second step. Classical higher regularity for second-order parabolic equations as in \cite[Chapter 7, Theorem~5]{Evans} yields $\|\partial_t \tilde{u}\|_{L^2(R)} \leq C \|\nabla u\|_{L^2(cR)}$ and hence the bound for $\partial_t u$ in \eqref{constderests.lem.eq1}. The bound for $\nabla \partial_t u$ then follows by the Caccioppoli  inequality applied to $\partial_t u$. 

Now, let $k \in \N_0$ and $\gamma \in \N_0^n$.  By the equation, we may write $\partial_t^{k}\partial^\gamma u$ as  a linear combination of terms $\partial_t^{k'} \partial^{\gamma'} u$ such that the order of differentiation is one to the vertical $\lambda$-direction. Since \eqref{constderests.lem.eq1} applies to the $x$- and $t$-derivatives of $u$, we find $\|\partial_t^{k}\partial^\gamma u\|_{L^2(R)} \leq C  \|\nabla u\|_{L^2(c R)}$.
Since this bound holds for all $k,\gamma$, the analogous bound with $L^\infty$-norm on the left follows by Sobolev embeddings and so does the claim that $u$ vanishes continuously on $Q^{n}(cR)$ .
\end{proof}
Finally, we need a reverse H\"older estimate for the gradient of adjoint solutions near the boundary. A Gehring-type argument is not applicable here, as we do not anticipate such an estimate in the interior. Instead, we utilize an analytic perturbation argument similar to \cite{ABES_JMPA} in the Banach spaces
\begin{align}\label{EpSpaces.eq}
\begin{split}
	E_p &\coloneqq L^{p}(\R ; W_0^{1,p}(\R_{+}^{n})) \cap H^{1/2,p}(\R ; L^{p}(\R_{+}^{n})), \\
	\|\bigdot\|_{E_p}^p &\coloneqq \|\bigdot\|_p^p + \|\nabla \bigdot\|_p^p + \|D_t^{1/2} \bigdot\|_p^p,
\end{split}
\end{align}
where $p \in (1,\infty)$, $t \in \R$ is the distinguished variable and the half-order time derivative $D_t^{1/2}$ is defined via the Fourier multiplier $\tau \mapsto |\tau|^{1/2}$ in the $t$-variable. Further background on the role of these spaces in parabolic PDEs can be found in \cite[Sect.~7]{ABES_JMPA}. For completeness, we provide a proof of the following complex interpolation result at the end of the section.
\begin{lemma}\label{EpInterpol.lem}
The spaces $E_p$ in \eqref{EpSpaces.eq} interpolate by the complex method according to the rule
\begin{align*}
	[E_{p_0}, E_{p_1}]_\theta = E_p, \qquad \theta \in (0,1), \quad (1-\theta)/p_0 + \theta/p_1 = 1/p
\end{align*}
and the same result holds for the (anti-)duals $E_p^*$.
\end{lemma}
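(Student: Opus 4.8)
The plan is to identify each $E_p$ with a complemented subspace of a product of weighted-type Bochner--Sobolev spaces whose interpolation is classical, and then transfer the scale via a retraction/co-retraction pair that is independent of $p$. Concretely, first I would observe that $E_p$ sits between the two ``extreme'' function spaces $L^p(\R;W_0^{1,p}(\R_+^n))$ and $H^{1/2,p}(\R;L^p(\R_+^n))$, and that both of these are known to interpolate correctly by the complex method: the former because $W_0^{1,p}(\R_+^n)$ (the closure of $C_c^\infty$) interpolates as $[W_0^{1,p_0},W_0^{1,p_1}]_\theta = W_0^{1,p}$ — itself obtained by viewing $W_0^{1,p}$ as a complemented subspace of $L^p\oplus L^p(\R_+^n;\R^n)$ via $u\mapsto(u,\nabla u)$ — and Bochner spaces commute with complex interpolation when the target interpolates and the measure space is fixed; the latter because $H^{s,p}(\R;X)$ is a complex interpolation space between $L^p(\R;X)$ and $W^{1,p}(\R;X)$ (Bessel-potential scale in the $t$-variable), again using that $X=L^p(\R_+^n)$ interpolates.

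The cleanest route is a single retraction. Define $\iota\colon E_p \to \mathcal{F}_p \coloneqq L^p(\R\times\R_+^n) \oplus L^p(\R\times\R_+^n;\R^n) \oplus L^p(\R\times\R_+^n)$ by $\iota u \coloneqq (u,\nabla u, D_t^{1/2}u)$; this is an isometry onto its range by the very definition of $\|\bigdot\|_{E_p}$. I would then construct a bounded linear left inverse $\pi\colon \mathcal{F}_p \to E_p$ that is \emph{the same operator for all $p\in(1,\infty)$}. The natural candidate is built from the Fourier multiplier $m(\tau,\xi) \coloneqq (1 + |\xi|^2 + |\tau|)^{-1}$-type symbol or, more robustly, the parabolic Riesz-type operators: set $\pi(f_0,f_1,f_2) \coloneqq G*(f_0 - \div_x f_1 + D_t^{1/2}f_2)$ where $G$ is the kernel of $(I - \Delta_x + D_t^{1/2}\cdot\mathrm{sgn})^{-1}$ — but since we need mapping into $W_0^{1,p}(\R_+^n)$ in the spatial variable, it is safer to work on the half-space directly and use the Dirichlet realization of the relevant parabolic-type operator, whose $H^\infty$-calculus / maximal regularity on $L^p(\R\times\R_+^n)$ for all $p\in(1,\infty)$ (as recorded in \cite[Sect.~7]{ABES_JMPA}) furnishes exactly the $p$-independent boundedness $\|\pi(f_0,f_1,f_2)\|_{E_p}\lesssim \|f_0\|_p+\|f_1\|_p+\|f_2\|_p$ together with $\pi\circ\iota = \mathrm{id}_{E_p}$. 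Once such a $(\iota,\pi)$ pair exists, $E_p$ is a retract of $\mathcal F_p$ uniformly in $p$; since $\mathcal F_p$ interpolates by the complex method (finite direct sum of $L^p$-spaces over a fixed measure space), the standard retract principle for complex interpolation (Triebel, 1.2.4) gives $[E_{p_0},E_{p_1}]_\theta = \pi\bigl([\mathcal F_{p_0},\mathcal F_{p_1}]_\theta\bigr) = \pi(\mathcal F_p) = E_p$, with equivalence of norms.

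For the (anti-)duals, I would use that $E_p$ is reflexive for $p\in(1,\infty)$ — it is a closed subspace of the reflexive space $\mathcal F_p$ — so that duality commutes with complex interpolation: $[E_{p_0}^*, E_{p_1}^*]_\theta = \bigl([E_{p_0}, E_{p_1}]_{\theta}\bigr)^* = E_p^*$ by the duality theorem for the complex method on a couple of reflexive spaces (at least one of the endpoints reflexive suffices; here both are). The antidual case is identical since passing between dual and antidual is an isometric conjugate-linear isomorphism compatible with the interpolation functor.

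The main obstacle I anticipate is producing the $p$-independent co-retraction $\pi$ and verifying it genuinely lands in $W_0^{1,p}$ in the spatial variable (homogeneous Dirichlet condition) rather than in full $W^{1,p}(\R_+^n)$; this forces one to use the \emph{Dirichlet} realization of the generator and to know its bounded imaginary powers / $\mathcal R$-sectoriality on $L^p(\R_+^n)$ uniformly in $p$, which is exactly the input one should cite from \cite{ABES_JMPA}. A secondary subtlety is matching the half-order time derivative: $D_t^{1/2}$ has symbol $|\tau|^{1/2}$, which is not the square root of $i\tau$, so the ``parabolic'' operator whose maximal-regularity one invokes should be taken with the nonnegative self-adjoint symbol $|\tau|^{1/2}$ (or one works with $|D_t|^{1/2}$ as a bounded-$H^\infty$-calculus operator on $L^p(\R;X)$, which is classical); once that bookkeeping is set up, everything reduces to the retract principle and the routine duality statement.
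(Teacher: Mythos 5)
Your overall architecture (realize $E_p$ as a retract of a product of $L^p$-spaces uniformly in $p$, invoke the retract principle for the complex method, then dualize using reflexivity) is sound and is in the same spirit as the paper, which also works with a complemented-subspace/retraction argument. But the entire weight of your proof rests on the co-retraction $\pi$, and this is exactly where the proposal has a genuine gap: you never construct it. The boundedness $\|\pi(f_0,f_1,f_2)\|_{E_p}\lesssim\|f_0\|_p+\|f_1\|_p+\|f_2\|_p$ is precisely an $L^p$ maximal-regularity/parabolic Riesz-transform statement for the Dirichlet realization on the half-space, i.e.\ $L^p$-bounds for operators of the type $\nabla(1-\Delta^{Dir}+|D_t|)^{-1}\div$, $|D_t|^{1/2}(1-\Delta^{Dir}+|D_t|)^{-1}|D_t|^{1/2}$ and the mixed terms; this is not elementary, is not proved in your sketch, and is not what \cite[Sect.~7]{ABES_JMPA} supplies (that reference is background on the spaces, not a Dirichlet-half-space functional-calculus theorem in the form you need). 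There is also a domain issue hidden in $\pi\circ\iota=\mathrm{id}$: for $u\in E_p$ the quantities $-\div\nabla u$ and $|D_t|u$ live only in $L^p(\R;W^{-1,p}(\R^n_+))$ resp.\ $H^{-1/2,p}(\R;L^p)$, so the ``inverse'' must be defined variationally on a space strictly larger than the one $\mathcal F_p$ suggests, and its $p$-independent invertibility there is again nontrivial. So as written the proof is an outline whose crucial step is asserted rather than established; it could be completed (most easily by odd reflection across $\{\lambda=0\}$, which turns all the operators into full-space Fourier multipliers handled by Mikhlin), but that repair essentially reproduces the paper's route.

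For comparison, the paper avoids any operator-theoretic input on the half-space: it quotes the known full-space isomorphism $E_p^{full}\cong L^p(\R^{1+n})$ from \cite[Lemma~6.1]{MR4127944} (so the full-space scale interpolates), identifies $E_p$ via zero-extension with the kernel of the elementary bounded projection ``restrict to the lower half-space, then extend by even reflection'', and concludes by the interpolation principle for complemented subspaces \cite[Sect.~1.17.1]{MR0503903}; duality then follows from reflexivity together with the density of $C_c^\infty(\R^{1+n}_+)$ in every $E_p$ and \cite[Cor.~4.5.2]{MR482275}. Your duality step is fine in substance, but you should also record the density of the intersection (e.g.\ via $C_c^\infty(\R^{1+n}_+)$), which the duality theorem for the complex method requires.
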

\begin{lemma}[Boundary Reverse H\"older inequality for the gradient] \label{RHforgflem.lem}
Let $M_0$ and $c>1$ be given.
Then there exist $\varepsilon_0, C> 0$ and $p>2$ such that the following holds.
Let $R =R(t,x,\lambda) \subset \ree_{+} $ be a Carleson box and let $\cL$ be an $(M_0,\varepsilon_0)$-parabolic operator.
Whenever $u$ is a solution to $\cL^{*}u = 0$ on $c R$ with $u= 0$ on $Q^{n}(cR)$, then
\[
\left( \fint_{R} |\nabla u(z)|^{p} \, dz \right)^{1/p} 
\le C \left( \fint_{cR} |\nabla u(z)|^{2} \, dz \right)^{1/2}.
\]
\end{lemma}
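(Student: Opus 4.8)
The plan is to set up an analytic perturbation (continuity) argument in the scale of spaces $E_p$ from \eqref{EpSpaces.eq}, in the spirit of \cite{ABES_JMPA}, and then extract the reverse Hölder estimate from higher integrability in $L^p$ together with the energy bounds already available. After a rescaling we may assume $(t,x,\lambda)=(0,0,1)$. The key structural input is that the adjoint equation $\cL^* u = 0$ with $u=0$ on the boundary cylinder can be localized: choosing a cutoff $\chi$ that is $1$ on $R$ and supported in $cR$, the function $\tilde u \coloneqq \chi u$ solves $\cL^* \tilde u = f + \div F$ on a fixed cylinder with zero lateral (adjoint-parabolic) boundary data, where $f$ and $F$ collect the commutator terms $(\partial_t\chi)u$, $A^*\nabla\chi\cdot\nabla u$, $A^*(u\nabla\chi)$, $B u \nabla\chi$, $Bu\chi$. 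The crucial point, using the Hardy-inequality observation from Section~\ref{subsec.weak.sol} (namely $\int |Bu|^2 \lesssim \varepsilon_0 \int |\partial_\lambda u|^2$), is that all of these source terms are controlled in the relevant negative-order space by $\|\nabla u\|_{L^2(cR)}$, with the genuinely singular piece $Bu$ absorbed because $\varepsilon_0$ is small.

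Next I would realize the solution operator for the adjoint constant-coefficient (or, after freezing, $L^2$-bounded) problem as an isomorphism on $E_2^*$ — this is the $p=2$ theory for the Dirichlet problem with lateral data on the half-space, which is classical/available from the theory underlying \cite{ABES_JMPA} — and observe by Lemma~\ref{EpInterpol.lem} that the family $\{E_p\}$ and its duals interpolate by the complex method. Analytic perturbation (the invertibility of $E_p^*\to$ (target) is an open condition in $p$ and perturbs analytically in the coefficients and in $\varepsilon_0$) then yields, for some $p>2$ sufficiently close to $2$ and $\varepsilon_0$ sufficiently small depending only on $M_0$, a bound
\[
\|\nabla \tilde u\|_{L^p(\R^{n+1}_+)} \le C\bigl(\|f\|_{(\text{neg. order})} + \|F\|_{L^p}\bigr) + C\|\nabla u\|_{L^2(cR)}.
\]
Since $\tilde u = u$ on $R$, the left-hand side dominates $\|\nabla u\|_{L^p(R)}$; on the right-hand side the source terms are all supported in $cR\setminus R$ and bounded by $\|\nabla u\|_{L^2(cR)}$ (again using the Hardy estimate for the $B$-terms and $\|u\|_{L^2(cR)}\lesssim \|\nabla u\|_{L^2(cR)}$ by Poincaré on $cR$ with zero boundary trace). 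Dividing by the measure of $R$ and normalizing gives precisely
\[
\Bigl(\fint_R |\nabla u|^p\Bigr)^{1/p} \le C\Bigl(\fint_{cR}|\nabla u|^2\Bigr)^{1/2},
\]
which is the claim; the general $c>1$ follows by the usual chaining/covering of $R$ by finitely many translated-and-scaled copies, with the constant depending on $c$.

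The main obstacle I expect is the bookkeeping for the singular drift $B$ at the level of the perturbation argument: one must choose the ambient spaces so that $Bu$ and $\div(Bu)$ make sense and are small perturbations, which is exactly why $E_p$ pairs a spatial $W_0^{1,p}$-regularity with a half-order-in-time regularity $H^{1/2,p}$ — the negative-order dual space $E_p^*$ must absorb a term like $\div F$ with $F$ merely in $L^p$ as well as the zero-order term $f$, and the $\varepsilon_0$-smallness must beat the operator norm of the unperturbed resolvent uniformly across an interval of exponents $p$. Verifying the uniform (in $p$ near $2$) invertibility of the unperturbed adjoint Dirichlet problem on $E_p^*$, and that the perturbation map is analytic, is the technical heart; everything else (localization, commutator estimates, Hardy, Poincaré, rescaling, covering) is routine given the tools already assembled in the preliminaries.
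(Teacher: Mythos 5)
Your overall skeleton (localize with a cutoff, work in the scale $E_p$, interpolate via Lemma~\ref{EpInterpol.lem}, extrapolate invertibility in $p$, then bound the commutator source terms) coincides with the paper's proof, but the step you yourself call the technical heart is set up in a way that does not close. You propose to establish the $p=2$ isomorphism for a constant-coefficient (frozen) operator and then to perturb ``analytically in the coefficients''. There is no smallness available for that perturbation: $A$ is only bounded, measurable and $M_0$-elliptic, so $\|A-A_0\|_{L^\infty}$ is of size comparable to $M_0$ and a Neumann-series/analytic continuation in the coefficients cannot absorb it. The paper avoids this entirely: it proves invertibility of the \emph{variable-coefficient} operator $L = 1 + L_1 + L_2$ (with $L_1 u = \partial_t u + \div A\nabla u$, $L_2 u = \div(Bu)$) directly at $p=2$ by the hidden coercivity estimate, i.e.\ testing against $(1+\delta H_t)v$ with $H_t$ the Hilbert transform in $t$ as in \cite[Lemma~2.3]{ABES_JMPA}; only the drift is treated perturbatively, absorbed through the one-dimensional Hardy inequality using the smallness of $\varepsilon_0$. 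The passage to exponents $p>2$ is then \v{S}ne\u{\i}berg's lemma on the interpolation scale --- this is the rigorous version of your ``invertibility is an open condition in $p$'', and that part of your plan is fine, provided you also note that the inverses are consistent on common subspaces.

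There is a second, smaller gap in your treatment of the right-hand side. The term $F=-A(u\nabla\chi)$ has to be measured in $L^p$ with $p>2$, and Poincar\'e plus Hardy at the $L^2$ level do not give this: $u\in L^2_t W^{1,2}_x$ yields no integrability gain in the time variable, so $\|F\|_{L^p}\lesssim \|\nabla u\|_{L^2(cR)}$ is unjustified as stated. The paper gets it from the boundary reverse H\"older inequality for the solution itself (Lemma~\ref{bdry-rhi-solution.lem}, with exponent $q=2(n+2)/n$, which uses the $\sup_\tau$ part of the Caccioppoli estimate), and this is also what forces the restriction $p< 2(n+2)/n$ (together with $p_*\le 2$ for the zero-order term $f$ via the Sobolev embedding). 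So the conclusion step needs this extra ingredient, which is available in the preliminaries but is not supplied by the tools you invoke.
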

\begin{proof} 
By scaling, we may assume $\lambda = 1$. Let $\chi$ be a smooth function with $\chi = 1$ on  $R$, $\chi = 0$ on $\ree_{+} \setminus c^{1/2} R$ and $ \no{\partial_t \chi}_{\infty} + \no{\nabla \chi}_{\infty} \le C$ for some dimensional constant $C$, and set 
\[
L_1 u \coloneqq \partial_t u + \div A \nabla u, \quad  L_2u \coloneqq \div (Bu), \quad Lu \coloneqq u + L_1 u + L_2u.
\]
Then $\tilde{u} \coloneqq u\chi \in L^2(\R; W^{1,2}_0(\R_{+}^n))$  satisfies a global equation 
\[
L \tilde{u} = f + \div F
\]
on the upper-half-space with right-hand side
\begin{align*}
	\begin{split}
		f &\coloneqq u \chi + (\partial_t \chi) u - A\nabla u \cdot \nabla \chi - Bu \cdot \nabla \chi, \\
		F &\coloneqq - A(u\nabla \chi).
	\end{split}
\end{align*}
By the observation in Section~\ref{subsec.weak.sol} we have $Bu \in L_{loc}^2(\overline{\ree_{+}})$ and therefore the equation can be rewritten as $\partial_t \tilde{u} = \div(G) + h$ with right-hand sides $G, h \in L^2(\R^{n+1}_+)$. This means that $\partial_t \tilde{u} \in L^2(\R; W^{1,2}_0(\R_{+}^n)^*)$ and, using Plancherel's theorem for the Fourier transform in the $t$-variable, we conclude $u \in E_2$.

\emph{Step 1: Variational Estimates}.
We are going to use the full scale of spaces $E_p$ and define the operator $L: E_p \to E_{p'}^*$ variationally by
\begin{align*}
	\la{Lu, v} = \int_{\R^{1+n}_+} u \cdot \overline{v} + H_tD_t^{1/2}u \cdot \overline{D_t^{1/2}v} + A \nabla u \cdot \overline{\nabla v} - Bu \cdot \overline{\nabla v} \, dz,
\end{align*}
where $H_t$ is the Hilbert transform in the $t$-variable. In order to see that $L$ is well-defined, we use H\"older's inequality to bound
\begin{align*}
	|\langle L_1 v, w \rangle|
	&\leq \int_{\R^{1+n}_+} |H_tD_t^{1/2}v| |D_t^{1/2}w| + M_0| \nabla v |\nabla w|  \, dz \\
	&\leq  \|H_tD_t^{1/2}v\|_p  \|D_t^{1/2}w\|_{p'} + M_0 \|\nabla v\|_p \|\nabla w\|_{p'},
\end{align*}
and also invoke the one-dimensional Hardy inequality to get
\begin{align}
	\label{Hardy.eq}
	|\langle  L_2v, w \rangle|
	\leq \int_{\R^{1+n}_+} \varepsilon_0 \frac{|v|}{z_{n+1}} |\nabla w| \, dz
	\leq  \frac{\eps_0p}{p-1}  \|\partial_{n+1} v \|_{p}  \|\nabla w\|_{p'}.
\end{align}
Altogether, $L: E_p \to E_{p'}^*$  with norm depending on $M_0, \eps_0, p$. 

If $p=2$, then we have the `hidden coercivity bound'
\begin{align*}
	\Re \la{(1+L_1)v, (1+ \delta H_t)v} \geq (M_0^{-1} - \delta M_0) \|\nabla v\|_2^2 + \delta  \|D_t^{1/2}v\|_{2}^2 + \|v\big\|_2^2,
\end{align*}
compare \cite[Lemma~2.3]{ABES_JMPA}. We take $\delta \coloneqq M_0^{-1}(M_0+1)^{-1}<1$ to have equal constants for the first terms on the right-hand side and then require $\eps_0 < \delta/4$ in order to obtain from \eqref{Hardy.eq} a similar estimate for the full operator,
\begin{align*}
	\Re \la{L v, (1+ \delta H_t)v} \geq \frac{\delta}{2} \|v\|_{E_2}^2.
\end{align*}
This implies that  $L: E_2 \to E_2^*$ is invertible, see again \cite[Lemma~2.3]{ABES_JMPA}. 

Since the spaces $E_p$ and their duals interpolate by the complex method,  we can apply  Sneiberg's lemma (\cite[Theorem~A1]{MR3907738}, \cite{MR0634681}) to conclude that $L$ remains invertible as an operator $E_p \to E_{p'}^{*}$ and that the inverses agree on common subspaces, whenever $2 \leq p <p_0$, where  $p_0$ and the norm of the inverse depend only on the dimension and $M_0$.

\emph{Step 2:Conclusion}. If necessary, we lower $p_0$ to achieve $p_0 < \min(2(n+2)/n,n)$ and lower $\varepsilon_0$ to have Lemma~\ref{bdry-rhi-solution.lem} at our disposal.

Let $2 < p < p_0$. By H\"older's inequality, $\|\div F\|_{E_{p'}^*} \leq \|F\|_p$ and  the Sobolev embedding for $\smash{W^{1,p}_0(\R_{+}^n)}$,  also $\|f\|_{E_{p'}^*} \leq \|f\|_{p_*}$ with exponent $1/p_* = 1/p - 1/n$. Thus, 
\begin{align*}
	\|\div F \|_{E_{p'}^*} \leq  C \|u \|_{L^p(c^{1/2} R)} \leq C \|u \|_{L^2(cR)} \leq C \|\nabla u \|_{L^2(cR)},
\end{align*}
where we use Lemma~\ref{bdry-rhi-solution.lem}  in the third step and the Poincar\'e inequality with zero boundary values on one side of a cube in the fourth step. Likewise, in estimating
\begin{align*}
	\| f\|_{E_{p'}^*} 
	\leq  C \bigl(\|u \|_{L^{p_*}(c^{1/2} R)} +C \|\nabla u\|_{L^{p_*}(c^{1/2} R)}  \bigr)  \leq C \|\nabla u \|_{L^{p_*}(c^{1/2} R)} 
	\leq C \|\nabla u \|_{L^2(c^{1/2} R)},
\end{align*}
we use the one-dimensional Hardy inequality, the same Poincar\'e inequality and finally that $p_* \leq 2$. Altogether, $u \in E_2$ satisfies $L \tilde{u} = \div F + f\in E_{p'}^*$. Invertibility shown in Step~1 yields the claim
\begin{align*}
	\|\nabla u \|_{L^p(R)}
	\leq \|\tilde{u} \|_{E_p}
	\leq \ \|L\tilde{u} \|_{E_{p'}^*}
	\leq C \|\nabla u \|_{L^2(cR)}. &\qedhere
\end{align*}
\end{proof}
\begin{proof}[Proof of Lemma~\ref{EpInterpol.lem}]
Let us call
\begin{align*}
	E_{p}^{full} \coloneqq L^{p}(\R ; W ^{1,p}(\R^{n})) \cap H^{1/2,p}(\R ; L^{p}(\R^{n}))
\end{align*}
the corresponding spaces on the full space $\R^{1+n}$. By \cite[Lemma~6.1]{MR4127944} they are isomorphic to $L^p(\R; L^p(\R^n))$ and follow the interpolation rules in question. 

Extending by zero to the lower half-space, the space $E_p$ is hence isometrically identified as a closed subspace of $E_p^{full}$ that we also call $E_p$. Denoting by $\pi_1$ the restriction $L^{p}(\R ; W ^{1,p}(\R^{n+1})) \to L^{p}(\R ; W ^{1,p}(\R_{-}^{n+1}))$ and by $\pi_2: L^{p}(\R ; W ^{1,p}(\R_{-}^{n+1})) \to L^{p}(\R ; W ^{1,p}(\R^{n+1}))$ the extension by even reflection, we obtain a bounded projection $\pi \coloneqq \pi_2 \circ \pi_1$ of $\smash{E_p^{full}}$ with kernel $E_p$. The interpolation principle for complemented subspaces \cite[Sect.~1.17.1]{MR0503903} tells us that $E_p$ and $E_p^{full}$ interpolate according to the same rules. 

The construction above reveals that $E_p$-spaces are isomorphic to closed subspaces of reflexive spaces, hence reflexive. Moreover, $C_c^\infty(\R_{+}^{1+n})$ is dense in all $E_p$ by a standard smoothing procedure. Consequently, the interpolation rules for $E_p^*$ follow by a duality principle for the complex method \cite[Cor.~4.5.2]{MR482275}.
\end{proof}
\subsection{Pointwise estimates for weak solutions}
Local boundedness and (H\"older-) continuity of solutions to general $(M_0,\eps_0)$-parabolic equations and their adjoints is known since the work of Aronson~\cite{MR0435594}. We need the following pointwise estimates. 
\begin{lemma}[Parabolic Harnack's inequality {\cite[Theorem~2]{MR159139}}, {\cite{MR288405}}]
\label{parabolic-harnack.lem}
Let $M_0$ be given. There is a constant $C$ such that the following holds. Let $Q$ be a parabolic cylinder with $2Q \subset \ree_{+}$ and let $\cL$ be $(M_0,0)$-parabolic.
Then
\[
u(t,x,\lambda) \le u(s,y,\mu) \exp\left( \frac{C |(y,\mu)-(x,\lambda)|^{2}}{|t-s|} + 1   \right)
\]
for all $(t,x,\lambda), (s,y,\mu) \in Q$ in either of the following scenarios:
\begin{enumerate}
  \item $s > t$ and $u \ge 0$ solves $\cL u=0$ in $2Q$.
  \item $t > s$ and $u \ge 0$ solves $\cL^{*}u=0$ in $2Q$.
\end{enumerate}
\end{lemma}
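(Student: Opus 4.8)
The plan is to deduce this from the classical parabolic Harnack inequality of Moser \cite[Theorem~2]{MR159139}, \cite{MR288405} by a time-reversal reduction followed by a chaining argument along a space-time segment. First I would reduce the adjoint case (2) to case (1): if $u \ge 0$ solves $\cL^* u = 0$ in $2Q$, set $v(t,x,\lambda) \coloneqq u(-t,x,\lambda)$ and $\tilde A(t,x,\lambda) \coloneqq A^*(-t,x,\lambda)$; then $v$ is a nonnegative solution of $\partial_t v - \div(\tilde A \nabla v) = 0$ in the time-reflected cylinder $2\tilde Q$, the matrix $\tilde A$ is again $M_0$-elliptic, and $\tilde Q$ is a parabolic cylinder with $2\tilde Q \subset \ree_+$. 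Applying (1) to $v$ with earlier point $(-t,x,\lambda)$ and later point $(-s,y,\mu)$ — here $t>s$ translates to $-t<-s$ — and using $|(-s)-(-t)| = |t-s|$ produces exactly the claimed inequality for $u$. So it suffices to treat (1).

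For case (1) the building block I would use is the single-cylinder Moser estimate: there is $C_1 = C_1(n,M_0)\ge 1$ so that for every parabolic cylinder $P = Q(w,\rho)$ with $2P\subset\ree_+$ and every nonnegative solution of $\cL u = 0$ in $2P$, the value of $u$ at any point of the ``past half'' of $P$ is at most $C_1$ times its value at any point of the ``future half'' of $P$. Writing $\tau \coloneqq s-t>0$ and $d\coloneqq |(y,\mu)-(x,\lambda)|$, I would parametrize the space-time segment from $(t,x,\lambda)$ to $(s,y,\mu)$ affinely, cut $[0,1]$ into $N$ equal subsegments, and at the midpoint of the $j$-th subsegment place a cylinder $P_j$ of radius $\rho$ with $\rho^2\sim\tau/N$, so that its past half contains the node at time $t+(j-1)\tau/N$ and its future half contains the node at time $t+j\tau/N$. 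Choosing $N \coloneqq \max(N_0,\lceil C_2 d^2/\tau\rceil)$ with $C_2 = C_2(n)$ large enough forces the spatial step $d/N$ to be a small fraction of $\rho\sim\sqrt{\tau/N}$, so each Moser step advances $u$ from one node to the next later node; chaining the $N$ steps gives $u(t,x,\lambda)\le C_1^N u(s,y,\mu)$. Since $N\le N_0+1+C_2 d^2/\tau$, this exponentiates to $C_1^N\le \exp\bigl(C(d^2/\tau+1)\bigr)$ for a suitable $C = C(n,M_0)$, which is the asserted bound; pointwise evaluation is legitimate because solutions are locally Hölder continuous by Aronson \cite{MR0435594}.

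The step I expect to be the main (indeed essentially the only) obstacle is the uniform geometric bookkeeping needed to guarantee that every doubled cylinder $2P_j$ in the chain lies inside $2Q$. Since $(t,x,\lambda),(s,y,\mu)\in Q = Q(z_Q,r_Q)$ one has $\tau<2r_Q^2$ and $d\le 2\sqrt n\, r_Q$, the affine segment stays inside the convex box $Q$, and the cylinder radii satisfy $\rho\lesssim r_Q/\sqrt{N_0}$; the delicate regime is $\tau\sim r_Q^2$, where $N$ is bounded and the chain cylinders are as large as $\sim r_Q$. Imposing a fixed lower bound $N_0 = N_0(n)$ on the number of subdivisions keeps the doubles $2P_j$ inside $2Q\subset\ree_+$ (so that Moser's interior estimate applies to each of them), at the cost only of the harmless extra factor $C_1^{N_0}$, which has already been absorbed into $C$ above.
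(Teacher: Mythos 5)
The paper does not prove this lemma --- it is cited directly from Moser \cite{MR159139} and Aronson--Serrin \cite{MR288405} --- so there is no paper proof to compare against. Your time-reversal reduction for the adjoint case and your Moser-chain for the forward case are both correct and reproduce the standard derivation of the quantitative Harnack bound from the single-cylinder interior estimate; the only cosmetic mismatch is that the chain naturally yields $\exp\bigl(C(|(y,\mu)-(x,\lambda)|^2/|t-s| + 1)\bigr)$ rather than the paper's $\exp\bigl(C|(y,\mu)-(x,\lambda)|^2/|t-s| + 1\bigr)$, which is merely a matter of how the constant $1$ is placed and is immaterial wherever the lemma is invoked.
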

\begin{lemma}[Boundary Harnack's inequality \cite{Fab-Saf} {\cite[Theorem~1.6]{FGS}}]
\label{bdryharnack.lem}
Let $M_0$ be given.
There is a constant $C$ such that whenever $R = R(t,x,\lambda)$ is a parabolic Carleson region, $\cL$ is $(M_0,0)$-parabolic and $u,v \ge 0$ are weak solutions to $\cL u = 0$ on $2R$ that vanish continuously on the boundary cylinder $Q^n(2R)$, then
\[
\frac{u(z)}{v(z)} \le C \frac{u(a^+(t,x,\lambda) )}{v(a^-(t,x,\lambda) )} \qquad (z \in R).
\]
\end{lemma}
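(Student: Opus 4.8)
This is the parabolic boundary comparison principle of Fabes--Garofalo--Salsa \cite{FGS} (see also \cite{Fab-Saf}); since $\varepsilon_0=0$, the operator $\cL$ carries no drift and their hypotheses are met after the obvious translation of the geometric setup. The plan is to reduce to the unit scale and then assemble the bound from two ingredients, each of which uses only one (forward) direction of the interior Harnack inequality (Lemma~\ref{parabolic-harnack.lem}) together with the boundary H\"older continuity of solutions vanishing continuously on the flat face (a consequence of the De Giorgi--Nash--Moser theory of \cite{MR0435594}).

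First I would normalise: by translation and parabolic dilation invariance of the class of $(M_0,0)$-parabolic operators we may take $(t,x,\lambda)=(0,0,1)$, so that $a^{\pm}=(\pm 2,0,2)$, and it suffices to bound $u(z)/v(z)$ for $z\in R(0,0,1)$. The first ingredient is the \emph{parabolic Carleson estimate}: any nonnegative $w$ with $\cL w=0$ in $R(0,0,2)$ that vanishes continuously on $Q^n(0,0,2)$ satisfies $\sup_{R(0,0,1)}w\le C\,w(a^{+})$. Qualitatively: the boundary H\"older bound confines a near-maximiser of $w$ to the bulk of $R(0,0,1)$, since near $Q^n(0,0,2)$ the function $w$ is only a small power of the distance to the flat face times its maximum; and from such a bulk point, whose time coordinate lies below that of $a^{+}$, a Harnack chain of $O(1)$ cylinders contained in $R(0,0,2)$ that advances in time reaches $a^{+}$, so Lemma~\ref{parabolic-harnack.lem}(1) gives the estimate (the precise iteration is carried out in \cite{FGS}). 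Applied to $w=u$ this already yields $u(z)\le C\,u(a^{+})$ on all of $R(0,0,1)$. The second ingredient is the matching lower bound \emph{in the bulk}: since the time coordinate $-2$ of $a^{-}$ precedes every point of $R(0,0,2)$, a forward-in-time Harnack chain gives $v(z)\ge c\,v(a^{-})$ for all $z\in R(0,0,1)$ with $z_{n+1}\ge 1/2$, say. Both bounds invoke Lemma~\ref{parabolic-harnack.lem} in the same direction, which is precisely why the statement places $u$ at the forward and $v$ at the backward corkscrew; together they settle all $z\in R(0,0,1)$ with $z_{n+1}\ge 1/2$.

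The remaining, genuinely parabolic, task is to propagate the comparison from the bulk down to the flat face, i.e.\ to show that $u$ and $v$ decay at comparable rates near $Q^n(0,0,1)$. Here I would follow \cite{FGS}: introduce the $\cL$-caloric measure $\hm_{\ast}$ of a Carleson region sitting on the flat boundary as a common reference function and iterate over the cylinders $R(\zeta,2^{-k})$, $\zeta\in Q^n(0,0,1)$, $k\to\infty$. At each scale one bounds $u$ and $\hm_{\ast}$ from above on the non-flat parabolic boundary of the cylinder by the Carleson estimate, bounds $v$ from below in its bulk by an interior Harnack chain, and --- using a Carleson estimate for $\hm_{\ast}$ itself --- deduces that the oscillation of $u/v$ over $R(\zeta,2^{-k})$ contracts geometrically in $k$; transferring the shifted corkscrew values back to $a^{\pm}$ at each scale (again by the Carleson estimate and Lemma~\ref{parabolic-harnack.lem}) converts this into the uniform estimate $u(z)/v(z)\le C\,u(a^{+})/v(a^{-})$ on $R(0,0,1)$. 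I expect this dyadic matching --- equivalently, the backward Harnack inequality $v(a^{+})\le C\,v(a^{-})$ of \cite{FGS} --- to be the main obstacle: it is exactly where the forward/backward asymmetry of second-order parabolic equations must be overcome, and where the continuous vanishing of $u$ and $v$ on $Q^n(0,0,2)$ is indispensable.
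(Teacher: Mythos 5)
The paper gives no proof of this lemma at all: it is quoted verbatim from the literature (Fabes--Safonov and \cite[Theorem~1.6]{FGS}), and your proposal likewise defers the genuinely hard step (the backward Harnack inequality / dyadic oscillation argument) to those same references. Your outline of the cited proof --- normalization, Carleson estimate at the forward corkscrew, forward Harnack chains from the backward corkscrew, and propagation to the flat face via caloric measure --- is an accurate description of the FGS argument, so this is essentially the same (citation-based) treatment as the paper's.
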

\begin{lemma}[Carleson Estimate {\cite[Theorem~3.1]{MR640782}}] \label{carlest.lem}
Let $M_0$ be given.
There is a constant $C$ such that whenever $R = R(t,x,\lambda)$ is a parabolic Carleson region, $\cL$ is $(M_0,0)$-parabolic and $u \ge 0$ is a weak solution to $\cL u = 0$ on $2R$ that vanishes continuously on the boundary cylinder $Q^n(2R)$, then
\[
u(z) \le C u(a^+(R)) \qquad (z \in \tfrac{1}{2} R).
\]
\end{lemma}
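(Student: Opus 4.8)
The plan is to reduce to a normalized configuration and then run the classical Caffarelli--Fabes--Mortola--Salsa scheme, adapted to the parabolic setting where the Harnack inequality is directed in time. By translating and parabolically rescaling we may assume $R = R(0,0,1)$, so that $a^{+}(R) = (2,0,2)$, $2R = Q^{n}(0,0,2) \times (0,4)$, and the class of $(M_0,0)$-parabolic operators is preserved. We may assume $u \not\equiv 0$, so $u > 0$ in the interior of $2R$ by the strong maximum principle, and by hypothesis $u$ extends continuously by $0$ across $Q^{n}(2R)$. After normalizing $u(a^{+}(R)) = 1$, the goal becomes $u \le C$ on $\tfrac{1}{2}R$. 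Throughout, for $z = (s,y,\mu) \in \overline{\ree_{+}}$ we write $q = (s,y)$ for its horizontal projection and use $Q^{n}(q,r)$, $R(q,r)$, $a^{\pm}(q,r)$ in the evident sense.

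\emph{Interior bound via forward Harnack chains.} The structural fact that makes the forward corkscrew the right reference point is that $a^{+}(R) = (2,0,2)$ lies strictly to the future of every point of $R(q,1)$ whenever $q \in Q^{n}(0,0,\tfrac{1}{2})$, since such points have time coordinate $< q_1 + 1 < \tfrac{5}{4} < 2$. Hence, for each $\eta \in (0,\tfrac{1}{10})$ there is $N = N(\eta,n,M_0)$ so that any $w \in 2R \cap \ree_{+}$ with $w_{n+1} \ge \eta$ and $d_{n+1}(w,a^{+}(R)) \le 4$ can be joined to $a^{+}(R)$ by a chain of at most $N$ overlapping parabolic cylinders with strictly increasing time-centers whose doubles lie in $2R \cap \ree_{+}$; applying Lemma~\ref{parabolic-harnack.lem}(1) along this chain gives $u(w) \le C_{\eta}\, u(a^{+}(R)) = C_{\eta}$. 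Together with interior local boundedness (Moser's estimate, available for $(M_0,0)$-parabolic operators from \cite{MR0435594}), this yields $u \le C_{\eta}$ on $\{ w \in R(0,0,\tfrac{3}{4}) : w_{n+1} \ge \eta \}$.

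\emph{Boundary decay and conclusion.} Since $u \ge 0$ vanishes continuously on $Q^{n}(2R)$, the De Giorgi--Nash--Moser boundary estimates for $(M_0,0)$-parabolic operators (Aronson \cite{MR0435594}; Fabes--Garofalo--Salsa \cite{FGS}) give $\alpha = \alpha(n,M_0) \in (0,1)$ with $\inf_{R(q',\rho)} u = 0$, hence
\[
\sup_{R(q',\rho)} u = \osc_{R(q',\rho)} u \le C (\rho/\sigma)^{\alpha} \sup_{R(q',\sigma)} u \qquad (\rho \le \sigma, \ R(q',2\sigma) \subset 2R).
\]
The two ingredients are then welded into the intermediate claim that there is a fixed $c_2 > 0$ with $\sup_{R(q,c_2)} u \le C u(a^{+}(R))$ for every $q \in Q^{n}(0,0,\tfrac{1}{2})$: one splits $R(q,c_2)$ at the height threshold $\eta$, controls the slab $\{ w_{n+1} \ge \eta \}$ by the interior bound, controls $\{ w_{n+1} < \eta \}$ by the boundary decay (which supplies a gain $\eta^{\alpha}$), and absorbs, iterating over a geometric sequence of radii that increases to a scale comparable to $1$ and comparing the resulting maxima through the maximum principle, exactly as in Caffarelli--Fabes--Mortola--Salsa. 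Granting the claim, we are done: for $z = (s,y,\mu) \in \tfrac{1}{2}R$ with projection $q = (s,y)$, if $\mu \ge c_2$ the interior bound applies directly, while if $\mu < c_2$ then $z \in R(q,\mu)$ and the boundary decay gives $u(z) \le \sup_{R(q,\mu)} u \le C(\mu/c_2)^{\alpha}\sup_{R(q,c_2)} u \le C u(a^{+}(R))$.

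I expect the scale-localization and absorption step to be the genuine obstacle. Unlike in the elliptic situation one cannot chain Harnack estimates backward in time, so in the iteration every chain invoked must be checked to point toward the future; this is precisely why the \emph{forward} corkscrew is the correct normalization point, and the time-direction must be tracked at every scale. A secondary, more technical issue is the validity of the boundary H\"older decay up to $\{\lambda = 0\}$ for merely $M_0$-elliptic (non-smooth) $A$ and its adjoint; this is classical, but should be invoked carefully --- indeed, since the whole statement is standard, in the paper it is enough to cite \cite[Theorem~3.1]{MR640782}.
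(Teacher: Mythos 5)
The paper does not prove this lemma; it simply cites Salsa's Theorem~3.1 in \cite{MR640782}, which is precisely the Caffarelli--Fabes--Mortola--Salsa scheme you sketch. Your outline (forward Harnack chains to the corkscrew, boundary H\"older decay, absorption across a dichotomy in $\lambda$) is the standard argument from that reference, and you correctly flag the two delicate points: that all Harnack chains must run forward in time toward $a^+(R)$, and that the boundary H\"older estimate must be taken from \cite{MR0435594,FGS} rather than from the paper's Lemma~\ref{bhc.lem}, which is derived \emph{after} (and from) the Carleson estimate, so using it here would be circular.
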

By a simple reversal of time, we see that in Lemmas~\ref{parabolic-harnack.lem} and \ref{bdryharnack.lem} for solutions to the adjoint equation $\cL^* u= 0$, the roles of forward and backward time-lag are interchanged. Combining {\cite[Theorem~6.32]{GARY}}, the Carleson Estimate and the Parabolic Harnack's inequality also yields the following useful bound.

\begin{lemma}[Boundary H\"older Continuity]\label{bhc.lem}
Let $M_0$ be given. There are constants $C$ and $\alpha \in (0,1)$ such that whenever $R=R(t,x,\lambda)$ is a parabolic Carleson region, $\cL$ is $(M_0,0)$-parabolic and $u \geq 0$ is a weak solution to $\cL u = 0$ or $\cL^*u = 0$ on $2R$ that vanishes continuously on the boundary cylinder $Q^n(2R)$, then
\[
u(z) \le C \left(\frac{z_{n+1}}{\lambda}\right)^\alpha \left(\fint_{2R} |u(w)|^2 \, dw \right)^{1/2} \qquad (z  \in \tfrac{1}{2} R).
\]
\end{lemma}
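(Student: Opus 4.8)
The plan is to combine three ingredients already collected in this section: the Carleson Estimate (Lemma~\ref{carlest.lem}), the parabolic Harnack inequality (Lemma~\ref{parabolic-harnack.lem}), and the interior Hölder continuity of solutions quoted from \cite[Theorem~6.32]{GARY}. First I would reduce to $\lambda = 1$ by the usual parabolic rescaling, and normalize $\bigl(\fint_{2R}|u|^2\bigr)^{1/2} = 1$; the statement to prove is then $u(z) \le C\, z_{n+1}^\alpha$ for $z \in \tfrac12 R$. I would also treat only $\cL u = 0$, since the adjoint case follows by reversing time (as noted right before the statement), which interchanges the forward and backward corkscrew points but leaves the conclusion unchanged. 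Since $u$ is a nonnegative solution vanishing continuously on $Q^n(2R)$, the Carleson Estimate gives the uniform bound $u(w) \le C u(a^+(R))$ for $w \in \tfrac12 R$, and a further application of Harnack/Caccioppoli (or directly the $L^2$-to-$L^\infty$ bound for subsolutions) bounds $u(a^+(R))$ by $C\bigl(\fint_{2R}|u|^2\bigr)^{1/2} = C$. So $u$ is bounded by a dimensional constant on $\tfrac12 R$; the content of the lemma is the quantitative \emph{decay rate} $z_{n+1}^\alpha$ as one approaches the boundary.

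For the decay, I would run the standard oscillation-decay iteration adapted to the boundary. Fix $z = (s,y,\mu) \in \tfrac12 R$ with $\mu$ small, and consider the dyadic scales $2^{-j}$ for $j = 0, 1, \dots, k$ where $k$ is chosen so that $2^{-k} \approx \mu$. At each scale one compares $u$ on a parabolic Carleson box of size $2^{-j}$ (centered appropriately over the boundary, so that $z$ lies in its lower half at the final scale) with the value of $u$ at the forward corkscrew point $a^+$ of that box. The Carleson Estimate at scale $2^{-j}$ gives $\sup_{\frac12 R_j} u \le C u(a^+(R_j))$, while Harnack chains the corkscrew points across consecutive scales: $u(a^+(R_{j+1})) \le C' u(a^+(R_j))$ with $C' < 1$ \emph{after} one subtracts off the harmonic-measure-type gain coming from the vanishing boundary data. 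Concretely, the decay constant $C' < 1$ is produced exactly as in the classical proof of boundary Hölder continuity for the heat equation: because $u$ vanishes on $Q^n$ and is bounded above, the maximum principle (here: comparison with a supersolution, or equivalently the Carleson estimate combined with the fact that the ``good part'' of the parabolic boundary where $u$ could be large has controlled size) forces a fixed proportional drop $\sup_{\frac12 R_{j+1}} u \le \theta \sup_{\frac12 R_j} u$ with $\theta = \theta(n,M_0) \in (0,1)$. Iterating $k \approx \log_2(1/\mu)$ times yields $u(z) \le \theta^k \cdot C = C \mu^\alpha$ with $\alpha = \log_2(1/\theta)$, which is the claim.

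Alternatively — and this may be the cleanest route to write up — one can cite \cite[Theorem~6.32]{GARY} as a black box: it furnishes precisely a boundary Hölder modulus of continuity $|u(z) - u(z')| \le C (d_{n+1}(z,z')/\lambda)^\alpha \sup_{2R} |u|$ for solutions vanishing on a flat portion of the boundary, and then one only needs (i) $u(z') \to 0$ as $z' \to Q^n$ together with $u(z) = u(z) - 0$, so that the Hölder bound with $z'$ on the boundary reads $u(z) \le C(z_{n+1}/\lambda)^\alpha \sup_{2R}|u|$, and (ii) the replacement of $\sup_{2R}|u|$ by the $L^2$-average via the Carleson Estimate plus the local boundedness estimate, exactly as in the first paragraph. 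The main obstacle — and the reason the lemma is stated for \emph{both} $\cL$ and $\cL^*$ — is bookkeeping the time-directedness: the Carleson Estimate and Harnack inequality only propagate information \emph{forward} in time for $\cL$ (and backward for $\cL^*$), so the chain of corkscrew points and boxes must be arranged monotonically in time in the correct direction; getting $z$ into the lower half of the final box while keeping every box inside $2R$ and keeping all the Harnack chains pointing the right way is the only genuinely delicate point, and it is handled by a careful but routine choice of the centers of the $R_j$.
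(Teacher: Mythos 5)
Your final paragraph is precisely the paper's intended argument, as the sentence preceding the lemma indicates: invoke \cite[Theorem~6.32]{GARY} for boundary H\"older continuity, evaluate with $z'$ on $Q^n(2R)$ where $u$ vanishes continuously to extract the normal decay $(z_{n+1}/\lambda)^\alpha$, and then pass from the $L^\infty$ norm to the $L^2$ average via the Carleson Estimate together with local boundedness (Moser's estimate). The time-reversal reduction to the non-adjoint case is also how the paper handles $\cL^*$, consistent with the remark about Lemmas~\ref{parabolic-harnack.lem} and \ref{bdryharnack.lem}.

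Two small imprecisions are worth naming. First, in the oscillation-decay version the clause ``Harnack chains the corkscrew points across consecutive scales $\dots$ with $C' < 1$'' misassigns credit: Harnack by itself only yields a chain constant $\geq 1$, and the strict decay $\theta < 1$ comes entirely from the De Giorgi--Moser oscillation reduction that exploits the vanishing boundary data, as your next sentence does acknowledge but attributes somewhat loosely. Second, $\sup_{2R}|u|$ cannot be bounded by $C\bigl(\fint_{2R}|u|^2\bigr)^{1/2}$ verbatim; one can only control the supremum over a strictly interior subregion $cR$ with $c < 2$ by the $L^2$ average on $2R$, so the GARY estimate should be invoked in a form with $\sup_{cR}|u|$ on the right and the Carleson boxes scaled accordingly (and note that the Carleson Estimate applied on $2R$ controls $u$ only on $R$, not on all of $2R$, so a covering argument or a rescaled application is needed even for $cR$). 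Both are routine bookkeeping adjustments and do not affect the substance of the argument.
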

\subsection{Green functions for equations with drift}

In this section, we provide the definition and estimates for the Green function and the parabolic measure of an $(M_0,\varepsilon_0)$-parabolic operator $\cL$. 

The continuous Dirichlet problem for $\cL$ consists in finding for given $f \in C_c(\rn)$ a weak solution to $\cL u = 0$ in $\ree_+$ that is continuous up to the boundary and attains the boundary value $f$. If this problem is uniquely solvable for all $f$, then the parabolic measure $\omega^z$ at $z \in \ree_{+}$ is defined as the measure for which 
\[
u(z) = \int_{\partial \ree_{+}} f(w) \, d \omega^z(w),
\]
whenever $u$ and $f$ are related as above.
By a Green's function $G : \ree_{+} \times \ree_{+} \to [0,\infty)$ we mean a function satisfying the following properties.
\begin{enumerate}
  \item $G(\bigdot,z)$ with $z \in \ree_{+}$ fixed is a weak solution to $\cL u = 0$ in $\ree_{+} \setminus \{z\}$.
  \item $G(z,\bigdot)$ with $z \in \ree_{+}$ fixed is a weak solution to $\cL^{*}u = 0$ on $\ree_{+} \setminus \{z\}$.
  \item If $z, w \in \ree_{+}$ and $w_1 > z_1$, then $G(z,w) = 0$.
  \item If $z \in \ree_{+}$, then both $G(\bigdot, z)$ and $G(z,\bigdot)$ extend continuously to $\overline{\ree_{+}}$ and vanish at $\partial \ree_{+}$.
  \item If $\Psi \in C_c^\infty(\ree)$, then 
\begin{align}
\label{Rieszformeq.eq}
    \Psi(z) &= \la{\cL^{*}G(z,\bigdot) , \Psi}  + \int_{\partial \ree_{+}} \Psi(w) \, d\omega^{z}(w) \qquad (z \in \ree_{+}).
\end{align}
\end{enumerate}
The identity \eqref{Rieszformeq.eq} is called the Riesz formula (for $\mathcal{L}$). 

For the equations we consider here, these objects exist. For readers that are perhaps worried about smoothness assumptions (on the drift) in \cite{HL-Mem}, we remark that \cite{HL-Mem} constructs these objects in two settings: drift-free equations and a perturbative regime, where the `unperturbed'  operator has parabolic measure in (weak-)$A_\infty$\cite[Chapter~III]{HL-Mem}. Therefore one can run our arguments here first without the drift and prove that the associated parabolic measure is in $A_\infty$. Then the perturbative regime covers our $\eps_0$-small drifts with a Carleson condition. For instance, in Chapter~III of \cite{HL-Mem}, Theorem 1.7 implies (1.5)(b) for the operator $\cL$ (with drift) and therefore, by Lemma 2.2 and Lemma 2.6, the Green function and parabolic measure for $\cL$ exist. We state this observation explicitly as
\begin{proposition}
Given $M_0$, there exists $\varepsilon_0 > 0$ such that  if $\cL$ is an $(M_0,\varepsilon_0)$-parabolic operator, 
then the continuous Dirichlet problem for $\cL$ is uniquely solvable and the parabolic measure and the parabolic Green function on $\ree_{+}$ exist.
\end{proposition}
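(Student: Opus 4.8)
The plan is to reduce the statement to the existence theory of Hofmann and Lewis \cite[Chapter~III]{HL-Mem}, treating the drift-free part of $\cL$ and the singular drift $B \cdot \nabla$ separately.

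\emph{First}, I would record that for the drift-free operator $\cL_0 \coloneqq \partial_t - \div A \nabla$, where $A$ is merely $M_0$-elliptic, the continuous Dirichlet problem is uniquely solvable and the Green function $G_0$ and parabolic measure $\omega_0$ exist and satisfy properties (1)--(5), including the Riesz formula \eqref{Rieszformeq.eq}. This is exactly the drift-free construction in \cite[Chapter~III]{HL-Mem}: one exhausts $\ree_+$ by bounded parabolic cylinders, where the Dirichlet problem is classically solvable for bounded measurable coefficients, and passes to the limit; convergence and identification of the limit objects rely only on the interior and boundary Caccioppoli inequality (Lemma~\ref{caccioppoli.lem}), boundary H\"older continuity up to $\partial \ree_+$ (Lemma~\ref{bhc.lem}), and the maximum principle, none of which uses an oscillation hypothesis on $A$.

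\emph{Second}, for the full operator $\cL = \cL_0 + B \cdot \nabla$ I would invoke the perturbative construction in \cite[Chapter~III]{HL-Mem}, viewing $B \cdot \nabla$ as a small perturbation made manageable by the smallness of $\varepsilon_0$. The mechanism is the one-dimensional Hardy inequality from Section~\ref{subsec.weak.sol}: on any Carleson region on which $u$ vanishes along the boundary cylinder one has $\| B u \|_{L^2} \le 2 \sqrt{\varepsilon_0}\, \| \partial_\lambda u \|_{L^2}$, so $B \cdot \nabla u$ --- equivalently $\div(Bu)$ for the adjoint equation --- can be carried as an $L^2$ source term and absorbed into the left-hand side once $\varepsilon_0 = \varepsilon_0(M_0)$ is small, in the same way as in Lemma~\ref{caccioppoli.lem}. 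In the notation of \cite{HL-Mem}, \cite[Chapter~III, Theorem~1.7]{HL-Mem} then verifies hypothesis (1.5)(b) for $\cL$, whence \cite[Chapter~III, Lemmas~2.2 and 2.6]{HL-Mem} yield $G$, $\omega$ and the Riesz formula for $\cL$; unique solvability of the continuous Dirichlet problem for $\cL$ follows once more from a maximum principle (valid for $\varepsilon_0$ small, by the same absorption) together with the boundary continuity estimates built into the construction.

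The main obstacle is keeping track of the singular behaviour of the drift near $\partial \ree_+$, where $|B|$ can be as large as $\varepsilon_0 / \lambda$: this is precisely what forces a smallness threshold and what must be monitored so that all absorption steps, the maximum principle, and the structural condition behind \cite[Chapter~III, Theorem~1.7]{HL-Mem} hold simultaneously with a single $\varepsilon_0$ depending only on $M_0$. A secondary subtlety is that the perturbative construction in \cite{HL-Mem} is phrased with an unperturbed operator whose parabolic measure lies in (weak-)$A_\infty$, which is not available for $\cL_0$ in the present generality; one should therefore extract from \cite{HL-Mem} only the portion of the argument (Lemmas~2.2 and 2.6) that delivers existence of $G$ and $\omega$, rather than $A_\infty$-propagation, or equivalently work on bounded cylinders throughout and take the half-space limit at the very end.
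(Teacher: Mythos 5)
Your overall strategy --- reducing to the two constructions in \cite[Chapter~III]{HL-Mem}, drift-free and perturbative, and citing the same chain Theorem~1.7 $\Rightarrow$ (1.5)(b) $\Rightarrow$ Lemmas~2.2 and 2.6 --- is exactly the paper's strategy, and your Hardy-inequality absorption of the drift matches the paper's Section~\ref{subsec.weak.sol}. The difference is how you handle the requirement that the unperturbed operator have parabolic measure in (weak-)$A_\infty$. You correctly identify this as the sticking point, but your proposed workaround --- extract from \cite{HL-Mem} only an existence-only portion of Lemmas~2.2 and 2.6, or exhaust by bounded cylinders and pass to a limit --- is speculative and left unverified; you flag it as a subtlety rather than resolve it.

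The paper does something cleaner. It first applies its \emph{own} $A_\infty$-machinery (Theorem~\ref{green-main-estimate.thm}, Lemma~\ref{parab-measure-main.lem} and Theorem~\ref{weights-characterization.thm}, all with $B=0$) to the drift-free operator $\cL_0 = \partial_t - \div A \nabla$, whose Green function and parabolic measure already exist by the drift-free part of \cite{HL-Mem}, and concludes that $\omega_0 \in A_\infty$. This is a legitimate bootstrap: those estimates for $\cL_0$ need only the objects for $\cL_0$, not for $\cL$. With $\omega_0 \in A_\infty$ in hand, the hypothesis of the perturbative regime in \cite[Chapter~III]{HL-Mem} is met verbatim, and no surgery on the argument of \cite{HL-Mem} is required. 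Note that this route relies on the weak-DKP/small Carleson condition on $A$ (needed to invoke Theorem~\ref{green-main-estimate.thm}); the paper treats this as implicit in the phrase ``the equations we consider here'', even though the proposition's statement itself records only $(M_0,\varepsilon_0)$-parabolicity. Your step one is fine and your references are right; the missing idea is that you should establish the unperturbed $A_\infty$ directly (via the paper's own results with $B=0$) rather than try to re-engineer the perturbative existence argument in \cite{HL-Mem} to avoid needing it.
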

We recall the following set of useful estimates on parabolic measures and Green functions.

\begin{lemma}[Estimates for parabolic measure and Green function]\label{HL-Green-pm.lem}
Given $M_0$, there exists $\varepsilon_0 > 0$, $\kappa_0 \ge 40$ and $C \ge 1$ such that the following statement is valid. If $\kappa \ge \kappa_0$,  $(t_0,x_0,\lambda_0) \in \ree_{+} $ and $p  \coloneqq a^{+}(t_0,x_0,\kappa \lambda_{0})$, then for an $(M_0,\varepsilon_0)$-parabolic operator the Green function $G$ and the parabolic measure $\omega^p$ at $p$ satisfy: 
\begin{enumerate}
  \item \emph{Strong Harnack inequality}:
\[
\sup_{w \in W(z)} G(p,w) \le C \inf_{w \in W(z)} G(p,w) \qquad (z \in R(t_0,x_0, 4 \lambda_0)).
\]
  \item \emph{Local doubling for parabolic measure}:
\[
\omega^{p}(Q^n(t,x,2\lambda)) \le C \omega^{p}(Q^n(t,x,\lambda)) \qquad ((t,x,\lambda) \in R(t_0,x_0, 4 \lambda_0)).
\]
  \item \emph{CFMS-estimate}: 
 \[
\frac{\omega^{p}(Q(t,x,\lambda))}{\lambda^{n+1}} \approx_C \frac{G(p,(t,x,\lambda))}{\lambda} \qquad ((t,x,\lambda) \in R(t_0,x_0, 4 \lambda_0)).
 \]
  \item \emph{(Backward) Carleson estimate}: If $(t,x,\lambda) \in R(t_0,x_0, 4 \lambda_0)$, then
\[
G(p,(s,y,\mu)) \le C G(p,a^{+}(t,x,\lambda)) \qquad ((s,y,\mu) \in R(t,x,\tfrac{3}{4}\lambda)). 
\]
  \item \emph{Gradient estimate}: 
\[
\frac{G(p,(t,x,\lambda))}{\lambda} \approx_C \left(\fint_{R(t,x,\lambda)} |\nabla_zG(p,z)|^{2} \, dz \right)^{1/2} \qquad ((t,x,\lambda) \in R(t_0,x_0, 4 \lambda_0)),
\]
where $\nabla_z$ is the spatial gradient in the second set of variables for $G$.
\end{enumerate}
\end{lemma}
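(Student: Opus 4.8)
The plan is to obtain all five estimates from the Harnack-type and energy estimates already recorded (Lemmas~\ref{parabolic-harnack.lem}--\ref{bhc.lem}, Lemma~\ref{caccioppoli.lem}) together with the Riesz formula \eqref{Rieszformeq.eq}, once the position of the pole has been made harmless. Concretely, I would first fix a dimensional constant $C_0$ and then choose $\kappa_0 \ge 40$ so large that for every $\kappa \ge \kappa_0$ the pole $p = a^{+}(t_0,x_0,\kappa\lambda_0) = (t_0 + 2\kappa^2\lambda_0^2, x_0, 2\kappa\lambda_0)$ satisfies $d_{n+1}(p,w) \ge C_0\lambda_0$ and $w_1 < p_1$ for every $w \in R(t_0,x_0,C_0\lambda_0)$. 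By properties (1)--(4) of the Green function and this separation, $w \mapsto G(p,w)$ is then a nonnegative weak solution of $\cL^{*}u = 0$ on $R(t_0,x_0,C_0\lambda_0)$ that vanishes continuously on $Q^{n}(t_0,x_0,C_0\lambda_0)$, and for boundary cylinders $Q^{n} \subset Q^{n}(t_0,x_0,C_0\lambda_0)$ the map $z \mapsto \omega^{z}(Q^{n})$ is a nonnegative solution of $\cL u = 0$ there with the same vanishing. Since the Harnack, Carleson and boundary-Harnack estimates in the preliminaries are stated for drift-free operators, I would here either argue first for the drift-free operator and transfer to the $\varepsilon_0$-small drift through the perturbation scheme of \cite[Ch.~III]{HL-Mem} (as indicated before the statement), or simply observe that the $\varepsilon_0$-small, lower-order term does not disturb these qualitative estimates.

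With the geometry in place, I would prove the ``CFMS package'' (4), (3), (2) in that order. For (4), applying the Carleson estimate (Lemma~\ref{carlest.lem}, in its time-reversed form for $\cL^{*}$, where the forward corkscrew is replaced by the backward one) to $G(p,\cdot)$ on $R(t,x,\tfrac34\lambda)$ gives $G(p,(s,y,\mu)) \le C\,G(p,a^{-}(t,x,\lambda))$, and it remains to pass from $a^{-}$ to $a^{+}$; see the last paragraph. For (3), I would run the standard CFMS comparison: the two nonnegative $\cL$-solutions $z \mapsto \omega^{z}(Q^{n}(t,x,\lambda))$ and $z \mapsto G(z, a^{+}(t,x,\lambda))$ vanish continuously on $\partial\ree_{+}$ away from $Q^{n}(t,x,\lambda)$, so the boundary Harnack inequality (Lemma~\ref{bdryharnack.lem}), applied at a chain of scales, shows that their ratio is essentially constant on $\ree_{+} \setminus C\,Q^{n}(t,x,\lambda)$; evaluating it both at $p$ and at a corkscrew point of $Q^{n}(t,x,\lambda)$ --- where it is controlled by non-degeneracy of $\omega$, interior Harnack for $G$, and the normalization $|Q^{n}(t,x,\lambda)| \approx \lambda^{n+1}$ --- yields $\omega^{p}(Q(t,x,\lambda))/\lambda^{n+1} \approx G(p,(t,x,\lambda))/\lambda$ (after reconciling $G(p, a^{+}(t,x,\lambda))$ with $G(p,(t,x,\lambda))$ by interior/boundary Harnack). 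For these steps I would also simply cite \cite[Thm.~3.1]{MR640782} and the corresponding parts of \cite{HL-Mem}. Finally (2) follows from (3) and $G(p,(t,x,2\lambda)) \approx G(p,(t,x,\lambda))$, the latter again being a boundary/backward Harnack comparison at scale $\lambda$.

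For (1), note $W(z) \subset R(t,x,\lambda)$ with $\lambda = z_{n+1}$; the Carleson estimate gives $\sup_{W(z)} G(p,\cdot) \le C\,G(p,a^{-}(t,x,2\lambda))$, while chaining the interior parabolic Harnack inequality (Lemma~\ref{parabolic-harnack.lem}) for the adjoint solution $G(p,\cdot)$ along an $O(1)$-step, time-monotone chain from the forward point $(t+8\lambda^{2},x,4\lambda)$ (which lies ahead in time of all of $W(z)$, at height $\approx\lambda$ and away from $p$) down to an arbitrary point of $W(z)$ gives $\inf_{W(z)} G(p,\cdot) \ge c\,G(p,a^{+}(t,x,2\lambda))$; the backward Harnack inequality $G(p,a^{-}(t,x,2\lambda)) \le C\,G(p,a^{+}(t,x,2\lambda))$ then closes the argument. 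For (5), the upper bound is the Caccioppoli inequality (Lemma~\ref{caccioppoli.lem}) applied to $G(p,\cdot)$ on $R(t,x,\lambda)$ (which vanishes on $Q^{n}$), combined with the Carleson estimate, giving $\fint_{R(t,x,\lambda)} |\nabla_{z} G(p,z)|^{2}\,dz \le C\lambda^{-2} \fint_{R(t,x,2\lambda)} |G(p,z)|^{2}\,dz \le C(G(p,(t,x,\lambda))/\lambda)^{2}$; for the matching lower bound I would use that $G(p,\cdot)$ vanishes on $Q^{n}(t,x,\lambda)$ and, by (1) and interior Harnack, is $\gtrsim G(p,(t,x,\lambda))$ on a fixed sub-Whitney region of $R(t,x,\lambda)$, so the one-dimensional Poincar\'e inequality in the $\lambda$-variable forces $\fint_{R(t,x,\lambda)} |\nabla_{z} G(p,z)|^{2}\,dz \gtrsim (G(p,(t,x,\lambda))/\lambda)^{2}$.

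I expect the main obstacle to be the time-directedness of the parabolic Harnack inequality: Moser's Harnack only compares values of $G(p,\cdot)$ along one time direction, so every ``change of corkscrew point'' used above --- $G(p,a^{-}) \approx G(p,a^{+})$ and $G(p,(t,x,2\lambda)) \approx G(p,(t,x,\lambda))$ --- is an instance of the \emph{backward} Harnack inequality for nonnegative adjoint solutions that vanish on a boundary cylinder. This is the one genuinely delicate input; I would invoke it from \cite{Fab-Saf,FGS,HL-Mem} (it can also be deduced from Lemma~\ref{bdryharnack.lem} together with the local doubling in (2) by the classical bootstrap). Everything else reduces to the geometric facts about $p$ recorded in the first paragraph and to the estimates already collected in the preliminaries.
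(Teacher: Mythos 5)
Your proposal is correct and follows essentially the same route as the paper: items (1)--(4) are ultimately imported from \cite{HL-Mem} (you sketch derivations but also defer to that reference, which is what the paper does), and your proof of (5) --- boundary Caccioppoli plus the backward Carleson estimate (4) in one direction, strong Harnack (1) plus the Poincar\'e inequality with vanishing boundary values in the other --- is precisely the argument given in the paper. The one thing to tidy up is your parenthetical claim that the strong/backward Harnack inequality (1) could ``also be deduced from the local doubling in (2) by the classical bootstrap'': in your outline (2) itself rests on (1), so as written that loop is circular, and the safe route (which you also name, and which the paper takes) is to import the entire package (1)--(4) from \cite{HL-Mem}.
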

\begin{proof}
The first four items are straightforward reformulations of Lemmata 3.9 to 3.14 in \cite{HL-Mem}. The uniformity of the estimates with respect to $\kappa$ and the possibility to take an explicit ambient box $R(t_0,x_0,4 \lambda_0)$ also follows from \cite{HL-Mem}, which states results for all $(t_0,x_0,\lambda_0)$, some $\kappa_0$ and some ambient box $R(t_0,x_0, c \lambda_0)$: indeed, $a^+(t_0, x_0, \kappa \lambda_0) = a^+(t_0, x_0, \kappa_0 \lambda_1)$ for $\lambda_1 \coloneqq \lambda_0 \kappa/\kappa_0$ and under this change this change of reference point,  $Q(t_0,x_0, (c \kappa/\kappa_0) \lambda_0 ) = Q(t_0,x_0, c \lambda_1)$. 

In one direction, we use the strong Harnack inequality to control the left-hand side in (5) by the Whitney average $(\fint_{W(t,x, \lambda)}| \frac{G(p,z)}{\lambda}|^2 \, dz)^{1/2}$, enlarge to a Carleson average and then apply the Poincar\'e inequality with vanishing boundary values at $z_{n+1}=0$. Conversely, we use  the boundary Caccioppoli inequality and the backward Carleson estimate to control the right-hand side in (5) by $\frac{G(p, a^{+}(t,x,2\lambda))}{\lambda}$ and then conclude by the strong Harnack inequality.
\end{proof}
\section{Just estimates}
\noindent Here, we prove our key estimates on the energies introduced in Definition \ref{JandEdefs.def}. We split this part into four:
First constant coefficient operators, then error estimates for differences of solutions and finally we combine the two points of view to conclude oscillation estimates for Green functions and parabolic measure. 
\subsection{Estimates for energies of constant coefficient operators}
\label{Estimates for constant coefficient operators}
We start with estimates for the energies from Definition~\ref{JandEdefs.def} in case of constant coefficient equations.

The decay estimate is a standard estimate on regularity of solutions.
An interior version can be found in many textbooks and the boundary version for elliptic equations is provided in \cite[Lemma~3.4]{DLM}. The parabolic version follows with no additional complications.
The non-degeneracy estimate is very different: it relies on the strong Harnack inequality and we do not expect a full analogy of the elliptic theory.
For our applications, however, it suffices to work with solutions that share many values with a parabolic Green function.
\begin{lemma}\label{dec-lb-jande.lem}
Let $M_0$ be given.
There exists $\varepsilon_0 > 0$, $\kappa_0 \ge 40$ and $C \ge 1$ such that whenever $\cL_0$ is  a constant coefficient $(M_0,0)$-parabolic operator, $\kappa \ge \kappa_0$ and $(t_0,x_0, \lambda_0), (t,x,\lambda) \in \ree_{+}$, then the following holds.
\begin{enumerate}
  \item \emph{Decay of $J$}: Whenever $u_0$ solves $\cL_0^{*}u=0$ in $R(t,x,\lambda)$ and satisfies $u_0=0$ on $Q^{n}(t,x,\lambda)$, then 
\[
J_{u_0}(t,x,\lamprime) \le C \left( \frac{\lamprime}{\lambda} \right)^{2} J_{u_0}(t,x,\lambda) \qquad (0< \lamprime \leq \lambda).
\]
  \item \emph{Non-degeneracy of $E$}:  Let $G$ be the Green function for an $(M_0,\varepsilon_0)$-parabolic operator and set $u  \coloneqq G(a^+(t_0,x_0, \kappa \lambda_0), \bigdot)$. If $(t,x,\lambda) \in R(t_0,x_0,\lambda_0)$ and $u_0$ solves $ \cL_0^*u_0 = 0 $ in $R(t,x,2\lambda)$ with $u_0  = u$ on $\partial_{par^*}  R(t,x,2\lambda)$ (continuously on $Q^n(t,x,2\lambda)$), then
\[
E_{u_0}(t,x,\lambda) \le C  E_{u_0}(t,x,\lamprime) \qquad (0<\lamprime \leq \lambda).
\]
\end{enumerate} 
\end{lemma}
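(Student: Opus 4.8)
The plan is to prove the two parts separately, with part (1) being a routine Campanato-type decay and part (2) being the genuinely new estimate.

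For part (1), the decay of $J$, I would follow the scheme of \cite[Lemma~3.4]{DLM}. First extend $u_0$ by odd reflection across $Q^n(t,x,\lambda)$ if convenient, or simply work with the Dirichlet condition directly; the point is that since $\cL_0$ has constant coefficients, the difference quotients of $u_0$ in the $t$- and $x$-directions again solve $\cL_0^* = 0$ with vanishing Dirichlet data on the boundary cylinder, so Lemma~\ref{constderests.lem} applies and gives interior-type bounds on all derivatives of $u_0$ in terms of $\fint_{R}|\nabla u_0|^2$. The key observation is that $J_{u_0}(t,x,\varrho)$ measures how far $u_0$ is from the affine-in-$\lambda$ function $\lambda \mapsto z_{n+1}\la{\partial_{n+1}u_0}$; a Taylor expansion of $u_0$ around a boundary point, using that $u_0$ vanishes on $Q^n$ together with the derivative bounds from Lemma~\ref{constderests.lem}, shows that on $R(t,x,\varrho)$ the quantity $\nabla(u_0 - z_{n+1}\la{\partial_{n+1}u_0}_{R(t,x,\varrho)})$ is controlled by $\varrho$ times a second-order derivative bound, which is $\varrho \cdot \varrho^{-1} \cdot (\varrho/\lambda)\cdot(\text{derivative at scale }\lambda)$-type; squaring and averaging yields the factor $(\varrho/\lambda)^2$. (One has to be slightly careful that the minimizing affine function changes with the scale, but this is the usual Campanato argument and only helps.) I do not expect any obstacle here.

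Part (2), non-degeneracy of $E$, is the main point and is where the time-directedness of the Harnack inequality forces a new argument. The quantity $E_{u_0}(t,x,\lambda) = \fint_{R(t,x,\lambda)}|\nabla u_0|^2$ is morally comparable (via Caccioppoli and Poincaré with vanishing boundary data) to $\fint_{W(t,x,\lambda)}|u_0/\lambda|^2$, so it suffices to show that the $L^2$-mass of $u_0$ on the Whitney region at scale $\lambda$ is not much larger than on the region at scale $\varrho < \lambda$; equivalently, $u_0$ cannot be too concentrated near the top of $R(t,x,\lambda)$. The idea is to compare $u_0$ with $u = G(a^+(t_0,x_0,\kappa\lambda_0),\bigdot)$, which agrees with $u_0$ on $\partial_{par^*}R(t,x,2\lambda)$. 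For the actual Green function $u$, the estimates of Lemma~\ref{HL-Green-pm.lem} — in particular the strong Harnack inequality (1), the CFMS-estimate (3) with local doubling (2), and the backward Carleson estimate (4) — give precisely that $u(a^+(t,x,\varrho)) \approx u(a^+(t,x,\lambda))$ up to a constant uniform in $\varrho \le \lambda$ (doubling the surface cube a bounded-by-$\log_2(\lambda/\varrho)$... no: one must iterate doubling, which would lose a power of $\lambda/\varrho$, so instead one uses that the pole $p = a^+(t_0,x_0,\kappa\lambda_0)$ is \emph{far away} with $\kappa$ large, which makes $\omega^p$ behave like an $A_\infty$-weight at scales below $\lambda_0$ — but we are not allowed to assume that yet). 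So the correct route is: use the CFMS-estimate to pass from Green-function values to $\omega^p$-measures of surface cubes, and use that $\omega^p$, being a nonnegative solution evaluated against boundary cubes with pole at a fixed faraway corkscrew point, cannot decay — it is bounded below by the strong/boundary Harnack inequality applied to $G(p,\bigdot)$ as a solution near the boundary, which is exactly what makes $E_u(t,x,\lambda) \lesssim E_u(t,x,\varrho)$ hold for the Green function itself. The hard part is then transferring this from $u$ to $u_0$: since $u_0 = u$ only on the \emph{adjoint} parabolic boundary of $R(t,x,2\lambda)$ (we drop the top face $w_1 = (t) + (2\lambda)^2$... for $\cL_0^*$ the solved-backwards equation means data on the later-time face is free), the difference $u - u_0$ solves $\cL_0^*(u-u_0)$ with zero data on the relevant part of the boundary but is driven by $\cL_0^* u = \cL^* u + (\cL_0^* - \cL^*)u$, i.e. by the coefficient oscillation; at this stage of the paper, though, we are comparing with a \emph{constant-coefficient} freeze, so one expects to absorb the difference via Lemma~\ref{diffOK.lem}-type perturbation.

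I expect the main obstacle to be exactly this interplay: one must show the comparison $u_0$ inherits the non-degeneracy of $u$ using only boundary Harnack / Carleson estimates for $u$ (which are available with constants independent of $\kappa$ by Lemma~\ref{HL-Green-pm.lem}) together with energy comparison between $u$ and $u_0$, and the delicate point flagged in the introduction is that the parabolic Harnack inequality only propagates positivity forward in time, so one cannot freely compare the value of $u$ at a backward corkscrew with a forward one — the resolution must use that $u = G(p,\bigdot)$ solves the adjoint equation, for which the time direction in Harnack is reversed (Lemma~\ref{parabolic-harnack.lem}(2), Lemma~\ref{bdryharnack.lem} for $\cL^*$), so the relevant corkscrew for $u$ as an adjoint solution is $a^-$ rather than $a^+$, and one has to track these orientations consistently through the CFMS-estimate and the definition of $\partial_{par^*}$. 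Once the orientations are aligned, the chain ``$E_{u_0}(t,x,\lambda) \lesssim \fint_{W}|u_0/\lambda|^2 \lesssim u_0(a^{\mp}(t,x,\lambda))^2 \approx u(a^{\mp})^2 \lesssim u(a^{\mp}(t,x,\varrho))^2 \cdot(\text{bounded}) \approx u_0(\ldots)^2 \lesssim E_{u_0}(t,x,\varrho)$'' closes, using Lemma~\ref{HL-Green-pm.lem}(2)--(4) for the middle comparison and Lemma~\ref{bhc.lem} / the boundary Caccioppoli inequality for the first and last.
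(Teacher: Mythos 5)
The proposal for part (1) tracks the paper: both rely on Lemma~\ref{constderests.lem} to get uniform interior control on derivatives of a constant-coefficient solution vanishing on the boundary cylinder, then a Taylor-type expansion gains the factor $(\varrho/\lambda)^2$.

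For part (2) there is a genuine gap. Your proposed chain
\[
E_{u_0}(t,x,\lambda)\lesssim \fint_{W}|u_0/\lambda|^2 \lesssim u_0(a^{\mp}(t,x,\lambda))^2 \approx u(a^{\mp})^2 \lesssim \dots \approx u_0(\dots)^2 \lesssim E_{u_0}(t,x,\varrho)
\]
needs a pointwise comparison $u_0(a^\mp)\approx u(a^\mp)$ at interior points, but $u_0$ and $u$ only agree on $\partial_{par^*}R(t,x,2\lambda)$, and the mechanisms you propose for transferring this (Lemma~\ref{diffOK.lem}-type perturbation or ``energy comparison'') are not available: Lemma~\ref{diffOK.lem} produces error terms proportional to $\alpha_{A,B}(R)^2$, which in the present lemma is not assumed small (the constant $C$ must be universal, independent of any Carleson/DKP smallness), and energy comparability in $L^2$ does not yield pointwise comparability at corkscrew points. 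The paper never compares $u_0$ and $u$ in the interior. Instead it (i) uses boundary Caccioppoli and the maximum principle to bound $E_{u_0}(t,x,\lambda)$ by $\sup_{\partial_{par^*}R(t,x,2\lambda)}|u_0|^2/\lambda^2$; (ii) exploits $u_0=u$ on that boundary plus the Carleson estimate and strong Harnack for the Green function $u$ (Lemma~\ref{HL-Green-pm.lem}, with pole far away) to show $\sup_{\partial_{par^*}}|u_0|\le C\inf_{\Gamma}|u_0|$ where $\Gamma$ is the forward top of the boundary; and (iii) --- this is the missing step in your proposal, and the ``completely novel argument'' flagged in the introduction --- constructs an auxiliary barrier $h$ solving $\cL_0^*h=0$ in $R(t,x,2\lambda)$ with a smooth bump as data on $\Gamma$, uses boundary H\"older continuity and interior Harnack to show $h(a^+(t,x,\lambda))\ge c>0$, and then the comparison principle ($(\inf_\Gamma |u_0|)\,h\le u_0$ on $\partial_{par^*}$, hence in the interior) to get $E_{u_0}(t,x,\lambda)\lesssim u_0(a^+(t,x,\lambda))^2/\lambda^2$. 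From there the boundary Harnack inequality (for $u_0$ and the linear function $z_{n+1}$, both $\cL_0^*$-solutions) and Lemma~\ref{constderests.lem} close the argument down to scale $\varrho$. You correctly sensed that the time-directedness of Harnack and the roles of $a^\pm$ for adjoint solutions are the delicate points, but without the barrier the chain does not close.
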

\begin{proof}
We start with the first item. The estimate follows by the triangle inequality if $\lamprime' > \lambda/2$, so we may assume $\lamprime \leq \lambda/2$ from now on. In this case we bound
\begin{align*}
J_{u_0}(t,x,\lamprime) 
\leq \sup_{(s,y,\mu) \in R(t,x, \lamprime)} |\nabla_y u_0(s,y,\mu)|^{2}
 + \fint_{R(t,x, \lamprime)} |\partial_{\mu} u_0- \la{\partial_{\mu} u_0}_{R(t,x, \lamprime)} |^{2} \, dz
\coloneqq \I + \II.
\end{align*}
As $u_0$ and its lateral derivatives vanish on $Q^{n}(t,x,\lambda)$, the mean-value theorem yields
\begin{align*}
\I 
\leq \sup_{(s,y,\mu) \in R(t,x, \lamprime)} |\mu \partial_{\mu} \nabla_y u_0(s,y,\mu)|^{2} 
&\leq C \left( \frac{\lamprime}{\lambda} \right)^{2} \fint_{R(t,x, \lambda/2)} |\lambda \nabla(\nabla_y u_0)|^{2} \, d z \\
&\leq C \left( \frac{\lamprime}{\lambda} \right)^{2} J_{u_0}(t,x,\lambda),
\end{align*}
where in the second and third step we have used Lemma~\ref{constderests.lem} and the boundary Caccioppoli inequality for the $y$-derivatives of $u_0$. Next, we introduce $v \coloneqq u_0 - \mu \la{\partial_{\mu} u_0}_{R(t,x, \lamprime)}$ and note that this function has the same properties as $u_0$ since $\cL_0$ has constant coefficients. Poincar\'e's inequality yields
\begin{align*}
\II 
\leq C \fint_{R(t,x, \lamprime)} |\lamprime \nabla \partial_{\mu} u_0|^{2} \, dz 
&= C \fint_{R(t,x, \lamprime)} |\lamprime \nabla \partial_{\mu} v|^{2} \, dz \\
&\le C \sup_{(s,y,\mu) \in R(t,x, \lambda/2)} |\lamprime \nabla (\partial_{\mu} v)|^{2} \\
&\le C \left( \frac{\lamprime}{\lambda} \right)^{2} \fint_{R(t,x, \lambda)} |\nabla v|^2 \, d z
=C \left( \frac{\lamprime}{\lambda} \right)^{2} J_{u_0}(t,x,\lambda),
\end{align*} 
where we have used Lemma~\ref{constderests.lem} in the penultimate step. This completes the proof.

We turn to the second item. By the boundary Caccioppoli inequality (Lemma~\ref{caccioppoli.lem}) and the maximum principle (e.g.\ \cite[Corollary~6.26]{GARY}), 
\begin{align*}
E_{u_0}(t,x,\lambda) \leq \frac{C}{\lambda^2}  \fint_{R(t,x,2\lambda)} |u_0|^2 \, dz 
\le \frac{C}{\lambda^2} \sup_{z \in \partial_{par^*} R(t,x,2\lambda)} |u_0(z)|^2.
\end{align*}
By assumption, $u_0$ and $u$ coincide on the adjoint parabolic boundary of $R(t,x,2\lambda)$. Thus, the Carleson estimate and the strong Harnack inequality from Lemma~\ref{HL-Green-pm.lem} yield
\begin{align*}
	\sup_{z \in \partial_{par^*} R(t,x,2\lambda)} |u_0(z)|
	\leq C u(a^{-}(t,x,4 \lambda))
	\leq \inf_{z \in \Gamma} |u_0(z)| 
\end{align*}
where $\Gamma \coloneqq \{(s,y,4\lambda) \in \partial_{par^*}R(t,x,2\lambda): s \geq t + 2 \lambda^2 \}$ denotes the `forward top part' of the parabolic boundary. Altogether,
\begin{align}\label{dec-lb-jande.lem.eq1}
E_{u_0}(t,x,\lambda) \le C \frac{\inf_{z \in \Gamma} |u_0(z)| ^2}{\lambda^2}.
\end{align}

Next, we will replace the infimum in a quantitative way by the value of $u_0$ at an interior point of $R(t,x,2\lambda)$. To this end, let $\Delta \subset \Gamma$ be the parabolic boundary cylinder of radius $\lambda$ around $(t+3\lambda^2,x, 4\lambda)\in \Gamma$ and let $\varphi \in C_c^\infty(\ree_{+})$ be such that $1_{\frac{1}{2} \Delta} \leq \varphi \leq 1_\Delta$ on $\Gamma$. Let $h$ be the weak solution to
\begin{align*}
\cL_0^{*} h &= 0 \quad \text{in } R(t,x,2\lambda), \\
h &= \varphi \quad \text{on } \partial_{par^*} R(t,x,2\lambda)
\end{align*}
that exists by constant-coefficient theory (e.g.~\cite[Chapter 7, Theorem~3]{Evans}). By the maximum principle, $0 \leq h \leq 1$. Thus, the boundary H\"older estimate (Lemma~\ref{bhc.lem}) applied to $1-h$ yields $	h(t+3\lambda^2,x,(4-\eps)\lambda) \geq 1/2$ for some $\eps \in (0,2)$ that depends only on $M_0$ and dimension. Finally, the Harnack inequality implies $h(a^+(t,x,\lambda)) \geq C>0$. On the other hand, $(\inf_{z \in \Gamma} |u_0(z)|) h \leq u$ on $\partial_{par^*} R(t,x,2\lambda)$ and by the maximum principle the inequality persists at $a^+(t,x,\lambda)$ in the interior. Going back to \eqref{dec-lb-jande.lem.eq1}, we obtain
\begin{align*}
	E_{u_0}(t,x,\lambda) \le C \frac{u_0(a^+(t,x,\lambda))^2}{ \lambda^2 h(a^+(t,x,\lambda))^2} \leq C \frac{u_0(a^+(t,x,\lambda))^2}{ \lambda^2} .
\end{align*}

We conclude by an upper bound for $u_0(a^{+}(t, x,\lambda))/\lambda$: Applying the boundary Harnack inequality (Lemma~\ref{bdryharnack.lem}) to the solutions $u_0$ and $\lambda$ of $\cL_0^{*}$, we find
\[
\frac{\lamprime/2}{u_0(t,x,\lamprime/2)} \le C \frac{\lambda}{u_0(a^{+}(t,x,\lambda))}
\]
and by the mean value theorem and Lemma~\ref{constderests.lem},
\begin{align*}
	\frac{u_0(a^{+}(t,x,\lambda))^2}{\lambda^2}
	\le C \biggl( \sup_{\mu \in (0, \lamprime/2)} \partial_{\mu} u_0(t,x,\mu) \biggr)^{1/2}
	\le  C E_{u_0}(t,x,\lamprime). & \qedhere
\end{align*}
\end{proof}
\subsection{Estimates for differences of adjoint solutions}
\label{Estimates for differences of solutions}
In order to extend Lemma~\ref{dec-lb-jande.lem} to certain solutions to (adjoint) equations with variable coefficients, we are going to control the error terms that come from the difference of two solutions. Throughout this subsection, $R= R(t,x,\lambda) \subset \ree_{+}$ is a Carleson region, $A$ is an $M_0$-elliptic, $B$ is an $\varepsilon_0$-small drift, $A_0$ is an $M_0$-elliptic constant matrix and we work with global weak solutions to the equations
\begin{align}\label{Estimates for differences of solutions.equ0}
	-\partial_t u_0 - \div A_0^{*} \nabla u_0  &= 0 \quad \text{in } R, \\
	 u_0 &= 0 \quad \text{on } Q^n(R) \notag
\end{align}
and
\begin{align}\label{Estimates for differences of solutions.equ}
		-\partial_t u - \div A^{*} \nabla u + \div( Bu ) &= 0 \quad \text{in } R,\\
		u-u_0 &= 0 \quad \text{on } \partial_{par^*} R, \notag
\end{align}
where the boundary values are understood in the Sobolev sense and both solutions vanish continuously on $Q^n(R)$.
We start with the following simple and classical perturbation estimate, see for instance \cite{Dahlpert} and \cite{DLM} for related estimates in the elliptic setting.

\begin{lemma}
\label{dahlest.lem}
Let $M_0$ be given. There exist $\varepsilon_0, C > 0$, depending only on $M_0$ and dimension, such that in the setting above 
\begin{align}\label{dahlest.eq}
\begin{split}
&\int_{R} |\nabla(u - u_0)|^2 \, dz \\
&\le C \min \left(\int_{R} |A- A_0|^2 |\nabla u|^2 + |B|^2|u|^2 \, dz , 
\int_{R} |A- A_0|^2 |\nabla u_0|^2 + |B|^2|u_0|^2\, dz \right) 
\end{split}
\end{align}
and 
\begin{equation}\label{gradsqrcomp.eq} 
C^{-1}  \int_R |\nabla u_0|^2 \, dz \leq \int_R |\nabla u|^2 \, dz  \leq C \int_R |\nabla u_0|^2 \, dz. 
\end{equation}
\end{lemma}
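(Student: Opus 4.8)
The plan is to test the equation for the difference $w \coloneqq u - u_0$ against $w$ itself (which is admissible since $w$ vanishes on all of $\partial_{par^*}R$ and continuously on $Q^n(R)$), producing an energy identity, and then absorb the drift terms using the $\varepsilon_0$-smallness via the one-dimensional Hardy inequality. Concretely, subtracting \eqref{Estimates for differences of solutions.equ0} from \eqref{Estimates for differences of solutions.equ} gives, in weak form,
\[
-\partial_t w - \div(A^* \nabla w) = \div\bigl((A_0^* - A^*)\nabla u_0\bigr) - \div(Bu) \quad \text{in } R,
\]
and pairing with $w$ and integrating over $R$ — using that the time-boundary term has a favorable sign because $w$ vanishes on the initial slice of $\partial_{par^*}R$ — yields
\[
\tfrac12 \sup_\tau \int_{R^\tau} |w|^2 + \int_R A^*\nabla w\cdot\nabla w \, dz \le \int_R |A-A_0||\nabla u_0||\nabla w| \, dz + \int_R |Bu||\nabla w| \, dz.
\]
The first term on the right is handled by Young's inequality, hiding $\tfrac{1}{4M_0}\int_R |\nabla w|^2$ in the coercive left-hand side $M_0^{-1}\int_R|\nabla w|^2$; for the second, I write $|Bu| \le \varepsilon_0 |u|/z_{n+1} \le \varepsilon_0(|w| + |u_0|)/z_{n+1}$ and apply the one-dimensional Hardy inequality (exactly as in Section~\ref{subsec.weak.sol}) to bound $\int_R |w|^2/z_{n+1}^2 \le 4\int_R |\partial_{n+1}w|^2$, which can again be absorbed once $\varepsilon_0$ is small enough depending only on $M_0$. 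What survives is
\[
\int_R |\nabla w|^2 \, dz \le C\int_R |A-A_0|^2|\nabla u_0|^2 \, dz + C\int_R |B|^2|u_0|^2 \, dz,
\]
which is one of the two minimands in \eqref{dahlest.eq}. The symmetric estimate with $u$ in place of $u_0$ follows by testing instead the equation in the equivalent form $-\partial_t w - \div(A_0^*\nabla w) = \div((A-A_0)^*\nabla u) - \div(Bu)$ against $w$, using the coercivity of $A_0$ this time and absorbing the $Bu$ term the same way; note $|Bu_0| \le \varepsilon_0(|w|+|u|)/z_{n+1}$ is not needed here since we keep $u$ on the right.

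For \eqref{gradsqrcomp.eq}, I would deduce it from \eqref{dahlest.eq} together with the Caccioppoli/Hardy machinery rather than prove it independently. Writing $\nabla u = \nabla u_0 + \nabla w$ and applying the first bound in \eqref{dahlest.eq} (the one with $\nabla u_0$ on the right),
\[
\int_R |\nabla u|^2 \le 2\int_R|\nabla u_0|^2 + 2\int_R|\nabla w|^2 \le 2\int_R|\nabla u_0|^2 + C\int_R(|A-A_0|^2 + \varepsilon_0^2)|\nabla u_0|^2 + C\int_R |B|^2|u_0|^2,
\]
and here the coefficient oscillation $|A-A_0|$ is bounded by $2M_0$ pointwise while $\int_R|B|^2|u_0|^2 \le 4\varepsilon_0\int_R|\partial_{n+1}u_0|^2 \le C\int_R|\nabla u_0|^2$ by Hardy; this gives the upper bound $\int_R|\nabla u|^2 \le C\int_R|\nabla u_0|^2$. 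The reverse inequality is obtained identically by exchanging the roles of $u$ and $u_0$: apply instead the second minimand in \eqref{dahlest.eq}, write $\nabla u_0 = \nabla u - \nabla w$, and use $\int_R|B|^2|u|^2 \le C\int_R|\nabla u|^2$ (again Hardy, valid because $u$ vanishes on $Q^n(R)$).

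The main obstacle I anticipate is justifying the energy identity rigorously — i.e.\ that $w$ is a legitimate test function and that the time-derivative term integrates to a nonnegative boundary contribution. The issue is that $w$ has only $L^2W^{1,2}$ regularity in space and $\partial_t w$ only lives in a dual space, so the formal integration by parts in time requires either a Steklov-average / mollification in $t$ argument or an appeal to the standard abstract result that for $w \in L^2(I;W_0^{1,2})$ with $\partial_t w \in L^2(I;W^{-1,2})$ the map $\tau \mapsto \|w(\tau)\|_2^2$ is absolutely continuous with $\tfrac{d}{d\tau}\|w\|_2^2 = 2\la{\partial_t w, w}$; since $w$ vanishes on the initial time-slice this yields the $\sup_\tau$-term with the right sign. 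The condition $\partial_t w \in L^2(I;W^{-1,2})$ holds here because $A^*\nabla w$, $(A-A_0)^*\nabla u_0$ and (via the Hardy bound recorded in Section~\ref{subsec.weak.sol}) $Bu$ are all in $L^2(R)$. Everything else is routine Cauchy–Schwarz, Young, and Hardy, with the smallness of $\varepsilon_0$ fixed once and for all depending only on $M_0$ and $n$.
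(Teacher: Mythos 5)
Your proposal is correct and follows essentially the same route as the paper: test the difference $w=u-u_0$ against (a Steklov/time-mollified version of) itself, use ellipticity of $A$ respectively $A_0$ plus Cauchy--Young to absorb, handle the drift with the one-dimensional Hardy inequality exactly as in Section~\ref{subsec.weak.sol}, and deduce \eqref{gradsqrcomp.eq} from \eqref{dahlest.eq} by the triangle inequality, boundedness of $A-A_0$ and Hardy. One correction to your justification of the sign of the time term: $\partial_{par^*}R$ \emph{excludes} the initial slice, so $w$ is known to vanish on the \emph{final} time slice, and it is this (together with the $-\partial_t$ in the adjoint equation) that makes the boundary contribution nonnegative --- vanishing at the initial slice would produce the opposite sign, so the argument should be phrased in terms of $w(b)=0$, as the paper does.
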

\begin{proof}
The estimate \eqref{gradsqrcomp.eq} follows from \eqref{dahlest.eq} by the triangle inequality, boundedness of $A-A_0$ and the Hardy estimate for the drift term discussed in Section~\ref{subsec.weak.sol}.

As for \eqref{dahlest.eq}, we formally use $w \coloneqq u_0 - u$ as a test function. To make this precise, we write $R = I \times \Sigma$, where $I = (a,b)$ is a time interval and $\Sigma$ a spatial cube, and set
\[
w_h(t,x,\lambda) \coloneqq \frac{1}{2h} \int_{t-h}^{t+h} 1_{(a+2h,b-2h)}(s) \, w(s,x,\lambda) \, ds.
\] 
In the limit as $h \to 0$, we have $w_h \to w$ in $L^{2}((a,b);W^{1,2}_0(\Sigma))$ by the maximal function theorem. Moreover, $w_h$ vanishes for $t = a$ and $t=b$. Since in addition $w_h \in W^{1,2}((a,b); L^2(\Sigma))$, this function is a valid test function for \eqref{Estimates for differences of solutions.equ0}, \eqref{Estimates for differences of solutions.equ}. (This approximation argument relies again on $Bu, Bu_0 \in  L^2(R)$.)

Now, by ellipticity of $A$,
\begin{align} \label{dahlest.eq1}
\begin{split}
M_0^{-1}& \int_{R} |\nabla w|^2 \, dz \\
&\leq \int_{R} A^*\nabla w \cdot \nabla w  \,dz \\
&= \lim_{h\to 0} \int_{R} A^*\nabla (u_0-u)  \cdot \nabla w_{h} \, dz\\
&=  \lim_{h\to 0} \int_{R} A_0^*\nabla u_0 \cdot \nabla w_{h} + (A^*-A_0^*) \nabla u_0 \cdot \nabla w_{h} - A^* \nabla u \cdot \nabla w_h \, dz \\
&= - \int_R Bu \nabla w \, dz + \int_R (A^* - A_0^*) \nabla u_0 \cdot \nabla w   - \lim_{h \to 0} \int_R w \partial_t w_h \, d z \\
&\eqqcolon -\I + \II  - \lim_{h \to 0} \III_h,
\end{split}
\end{align}
where the penultimate step follows by  \eqref{Estimates for differences of solutions.equ0}, \eqref{Estimates for differences of solutions.equ} and the convergence properties of $w_h$. We are going to estimate all terms in a way that produces the term with $u_0$ on the right-hand side of \eqref{dahlest.eq}. By Cauchy's inequality and Hardy's inequality,
\begin{align*}
	|\I|
	&\leq \frac{1}{2 M_0} \int_R |\nabla w|^2 \, dz + \frac{M_0}{2} \int_R |Bu|^2 \, dz \\
	&\leq \biggl(\frac{1}{2M_0} + 4 \eps_0 M_0\biggr) \int_R |\nabla w|^2 \, dz + M_0 \int_R |Bu_0|^2 \, dz 
\end{align*}
and the first term can be absorbed to the left in \eqref{dahlest.eq1} for $\eps_0 \coloneqq (16M_0^2)^{-1}$. The bound for $\II$ follows directly by Cauchy's inequality. To estimate $\III_{h}$, we write $w_{\pm h}(t,x,\lambda) \coloneqq w(t \pm h,x,\lambda)$ and note that by definition and a change of variables,
\begin{align*}
\III_{h}
& = \frac{1}{2h} \biggl( \int_{a+3h}^{b-3h} \iint_\Sigma w (w_h - w_{-h}) \, dx \, d\lambda \, dt 
 - \int_{b-3h}^{b-h} \iint_\Sigma w w_{-h} \, dx \, d\lambda \, dt \\
&\quad +  \int_{a+h}^{a+3h} \iint_\Sigma w w_h \, dx \, d\lambda \, dt \biggr)\\
&=  \frac{1}{2h}  \biggl(\int_{a+h}^{a+2h} \iint_\Sigma w w_h \, dx \, d\lambda \, dt -  \int_{b-3h}^{b-2h} \iint_\Sigma w w_h \, dx \, d\lambda \, dt \biggr).
\end{align*} 
Since $w \in C([a,b];L^{2}(Q))$ vanishes for $t=b$, we conclude that 
\begin{align*}
\lim_{h\to 0} \III_h = \frac{1}{2} \iint_\Sigma |w(a,x,\lambda)|^2 - |w(b,x,\lambda)|^2 \, dx \, d\lambda \geq 0.
\end{align*}
Going back to \eqref{dahlest.eq1}, this completes the proof of \eqref{dahlest.eq} with $u_0$ on the right. 

In order to end up with $u$, we repeat the argument but  use ellipticity of $A_0^*$ in \eqref{dahlest.eq1}.
\end{proof}
The next two lemmas provide estimates for integrals as on the right-hand side of \eqref{dahlest.eq} but on slightly smaller boxes. They come with additional decay from the weak-DKP condition. 

The case of a solution to a constant coefficient equation is particularly simple. The main work horse here is Lemma~\ref{constderests.lem}.

\begin{lemma}\label{unoughtermsOK.lem}
Let $M_0$ and $\theta \in (0,1)$ be given and specialize to $A_0 \coloneqq \la{A}_{R}$. There exist $\varepsilon_0, C > 0$ depending only on $M_0, \theta, n$ such that
\begin{align*}
\fint_{\theta R}  |A - A_0|^2 |\nabla u_0|^2 + |B|^2 |u_0|^2 \, dz
\leq C \alpha_{A,B}(R)^2 \fint_{R}|\nabla u|^2 \, dz.
\end{align*}
\end{lemma}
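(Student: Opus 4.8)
The plan is to reduce the whole estimate to two pointwise bounds for the constant-coefficient solution $u_0$ coming from Lemma~\ref{constderests.lem}, and then to exchange $\nabla u_0$ for $\nabla u$ on the right-hand side via \eqref{gradsqrcomp.eq}. The only reason the left-hand side lives on $\theta R$ rather than $R$ is that $u_0$ solves its equation only on $R$, so to apply the constant-coefficient derivative estimates we must first pass to a slightly smaller box.

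\textbf{Step 1: pointwise control of $u_0$ and $\nabla u_0$.} Fix an auxiliary radius $\sigma \coloneqq (1+\theta)/2 \in (\theta,1)$. Since $u_0$ is a global weak solution to $\cL_0^\ast u_0 = 0$ in $R = \tfrac{1}{\sigma}(\sigma R)$ with $u_0 = 0$ on $Q^n(R) = Q^n(\tfrac1\sigma\cdot \sigma R)$, Lemma~\ref{constderests.lem}, applied to the Carleson region $\sigma R$ with the choice $c = 1/\sigma > 1$, gives for every $k \in \N_0$, $\gamma \in \N_0^n$,
\[
\sup_{z \in \theta R} |\partial_t^k \partial^\gamma u_0(z)| \le \sup_{z \in \sigma R} |\partial_t^k \partial^\gamma u_0(z)| \le \frac{C}{(\sigma\lambda)^{2k+|\gamma|-1}} \left( \fint_R |\nabla u_0|^2 \, dz \right)^{1/2},
\]
and moreover $u_0$ vanishes continuously on $Q^n(R)$. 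Taking $k = 0$, $|\gamma| = 1$ yields $\sup_{\theta R}|\nabla u_0| \le C (\fint_R |\nabla u_0|^2\,dz)^{1/2}$. Next, for $z = (s,y,\mu) \in \theta R$, the vertical segment from $(s,y,0)$ to $(s,y,\mu)$ stays in $\sigma R$ and $u_0(s,y,0) = 0$ by continuity, so integrating $\partial_\lambda u_0$ along it and using the case $k=0$, $|\gamma|=1$ again gives the pointwise bound $|u_0(z)| \le C\, z_{n+1} (\fint_R |\nabla u_0|^2\,dz)^{1/2}$ on $\theta R$. Here $C = C(M_0,\theta,n)$ since $\sigma$ depends only on $\theta$.

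\textbf{Step 2: plug in and use the weak-DKP quantities.} Since $\theta R \subset R$ and $|\theta R| = \theta^{n+2}|R|$, we may estimate $\fint_{\theta R} g \le \theta^{-(n+2)} \fint_R g$ for $g \ge 0$. For the coefficient term, pull out the sup:
\[
\fint_{\theta R} |A - A_0|^2 |\nabla u_0|^2 \, dz \le \Bigl(\sup_{\theta R}|\nabla u_0|\Bigr)^2 \fint_{\theta R} |A - A_0|^2 \, dz \le C\, \theta^{-(n+2)} \alpha_{A}(R)^2 \fint_R |\nabla u_0|^2 \, dz,
\]
where I used $A_0 = \la{A}_R$, so that $\fint_R |A - A_0|^2 = \alpha_A(R)^2$. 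For the drift term, insert the pointwise bound on $|u_0|$:
\[
\fint_{\theta R} |B|^2 |u_0|^2 \, dz \le C \Bigl(\fint_R |\nabla u_0|^2\,dz\Bigr) \fint_{\theta R} |B|^2 z_{n+1}^2 \, dz \le C\, \theta^{-(n+2)} \alpha_B(R)^2 \fint_R |\nabla u_0|^2 \, dz,
\]
using $\fint_R |B|^2 z_{n+1}^2 = \alpha_B(R)^2$. Adding the two displays and recalling $\alpha_{A,B}(R) = \alpha_A(R) + \alpha_B(R)$ gives the claim with $\fint_R |\nabla u_0|^2$ in place of $\fint_R |\nabla u|^2$.

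\textbf{Step 3: swap $u_0$ for $u$, and the obstacle.} Finally, after shrinking $\varepsilon_0$ (depending only on $M_0,n$) so that Lemma~\ref{dahlest.lem} is available in the ambient setting of this subsection, \eqref{gradsqrcomp.eq} gives $\fint_R |\nabla u_0|^2 \, dz \le C \fint_R |\nabla u|^2 \, dz$, which completes the proof. I do not expect a genuine obstacle: the proof is essentially a size estimate built on Lemma~\ref{constderests.lem}. The only points needing care are (i) verifying that applying Lemma~\ref{constderests.lem} to the region $\sigma R$ with $c = 1/\sigma$ legitimately produces bounds on $\theta R$ with $R$ as the enlarged box — this is precisely why the statement is formulated on $\theta R$ with $\theta < 1$ strictly — and (ii) bookkeeping that every constant depends only on $M_0$, $\theta$, $n$, the $\theta$-dependence entering through the choice of $\sigma$ (hence through $C$ in Lemma~\ref{constderests.lem} with $c = 1/\sigma$) and through the volume factor $\theta^{-(n+2)}$.
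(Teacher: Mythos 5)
Your proposal is correct and follows essentially the same route as the paper's proof: a sup bound on $\nabla u_0$ over $\theta R$ from Lemma~\ref{constderests.lem}, the bound $|u_0(z)| \le C z_{n+1}\,(\fint_R|\nabla u_0|^2\,dz)^{1/2}$ obtained from the vertical derivative and the vanishing boundary values (the paper phrases this via the mean value theorem), and finally \eqref{gradsqrcomp.eq} to replace $\fint_R|\nabla u_0|^2$ by $\fint_R|\nabla u|^2$. Your extra bookkeeping with the intermediate radius $\sigma=(1+\theta)/2$ and the volume factor $\theta^{-(n+2)}$ only makes explicit what the paper absorbs into its constants.
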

\begin{proof}
By Lemma~\ref{constderests.lem} followed by \eqref{gradsqrcomp.eq}, 
\[
\sup_{z\in \theta R} |\nabla u_0(z)|^2 \le C   \fint_{R} |\nabla u_0|^2 \, dz  \le C   \fint_{R} |\nabla u|^2 \, dz
\]
and thus
\[
\fint_{\theta R}|A - A_0|^2 |\nabla u_0|^2 \, dz
\le C \biggl(\fint_{ R} |A - A_0|^2 \, dz \biggr) \biggl( \fint_{R}|\nabla u|^2 \, dz \biggr),
\]
which is half of the claim. By the mean value theorem, Lemma~\ref{constderests.lem} and \eqref{gradsqrcomp.eq}, 
\[
\biggl(\sup_{z \in \theta R} \frac{u_0(z)}{z_{n+1}}\biggr)^2
\le \sup_{z \in \theta R} |\partial_{n+1} u_0(z)|^2
\le C \fint_{R} |\nabla u_0|^{2} \, dz
\le C \fint_{R} |\nabla u|^{2} \, dz
\] 
and we obtain the second half of the claim in the same fashion:
\begin{align*}
 \fint_{\theta R} |B|^2 |u_0|^2\, dz
 \leq C  \left( \fint_{R} |B|^2 z_{n+1}^2 \, dz  \right) \fint_{R}|\nabla u|^2 \, dz. &\qedhere
\end{align*}
\end{proof}
We also need estimates analogous to Lemma~\ref{unoughtermsOK.lem} but with $u$ instead of $u_0$ on the left. Since the average on the left in Lemma~\ref{unoughtermsOK.lem} is bounded by that of $|\nabla u_0|^2$, controlling the difference of gradients is enough. The use of Lemma~\ref{constderests.lem} is now replaced by a variant of the Calder\'on--Zygmund estimate and the boundary reverse H\"older inequality for the gradient of solutions.  When specialized to the elliptic setting,
this argument gives a notable improvement over \cite[Lemma~3.19]{DLM}.

\begin{lemma}\label{diffOK.lem}
Let $M_0$ and $\theta \in (0,1)$ be given, specialize to $A_0 \coloneqq \la{A}_R$, and assume that $u_0$, $u$ solve \eqref{Estimates for differences of solutions.equ0}, \eqref{Estimates for differences of solutions.equ} also on $\theta^{-1} R$. Then there exist $\varepsilon_0, C > 0$ depending only on $M_0, \theta$ and dimension, such  that 
\begin{equation*}
\fint_{\theta^2 R} |\nabla (u-u_0)|^2 \, dz \leq C\alpha_{A,B}(R)^{2} \fint_{\theta^{-1} R} |\nabla u|^{2}  \, dz.
\end{equation*}
\end{lemma}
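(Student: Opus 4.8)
We work with the difference $w \coloneqq u - u_0$ and reduce the bound to the perturbation estimate of Lemma~\ref{dahlest.lem} together with the available pointwise control on the constant‑coefficient solution. As a preliminary reduction, we may assume that $\alpha_{A,B}(R)^2$ lies below a threshold depending only on $M_0$: in the opposite regime we estimate $\fint_{\theta^2 R}|\nabla w|^2 \le 2\fint_{\theta^2 R}|\nabla u|^2 + 2\sup_{\theta^2 R}|\nabla u_0|^2$, and Lemma~\ref{constderests.lem} followed by \eqref{gradsqrcomp.eq} bounds the right‑hand side by $C(\theta)\fint_{\theta^{-1}R}|\nabla u|^2 \le C\alpha_{A,B}(R)^2\fint_{\theta^{-1}R}|\nabla u|^2$.

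Next I subtract the equations. Since $A_0$ is constant and $u_0$ solves \eqref{Estimates for differences of solutions.equ0}, subtracting it from \eqref{Estimates for differences of solutions.equ} shows that $w$ is a global weak solution of
\[
-\partial_t w - \div(A^* \nabla w) + \div(B w) = \div G, \qquad G \coloneqq (A^* - A_0^*)\nabla u_0 - B u_0,
\]
in $\theta^{-1}R$; here $Bw, Bu_0 \in L^2$ by the one‑dimensional Hardy inequality, so the source term is admissible. Moreover $w$ vanishes continuously on the flat boundary and, by the matching condition, $w = 0$ on $\partial_{par^*}R$ in the Sobolev sense. The perturbation estimate \eqref{dahlest.eq} (discarding the boundary term, which has a favourable sign) then gives, since $\theta^2 R \subset R$,
\[
\fint_{\theta^2 R}|\nabla w|^2\,dz \le C(\theta)\fint_{R}|\nabla w|^2\,dz \le C(\theta)\fint_R \bigl(|A - A_0|^2|\nabla u_0|^2 + |B|^2|u_0|^2\bigr)\,dz,
\]
so that matters are reduced to an estimate of the shape of Lemma~\ref{unoughtermsOK.lem}, but now on the full box $R$ rather than a strictly smaller one.

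It remains to estimate this last average, and this is where the two new ingredients enter, in place of a naive use of interior regularity. Since $u_0$ solves a constant‑coefficient equation on $\theta^{-1}R$ and vanishes on $Q^n(\theta^{-1}R)$, Lemma~\ref{constderests.lem} --- playing here the role of a Calder\'on--Zygmund‑type estimate up to the flat boundary --- gives $\sup_R|\nabla u_0|^2 + \sup_R|u_0/z_{n+1}|^2 \le C(\theta)\fint_{\theta^{-1}R}|\nabla u_0|^2$. Combining this with the definitions $\fint_R|A-A_0|^2 = \alpha_A(R)^2$ and $\fint_R|B|^2 z_{n+1}^2 = \alpha_B(R)^2$, we get
\[
\fint_R \bigl(|A - A_0|^2|\nabla u_0|^2 + |B|^2|u_0|^2\bigr)\,dz \le C(\theta)\,\alpha_{A,B}(R)^2\fint_{\theta^{-1}R}|\nabla u_0|^2\,dz.
\]
To conclude, one must replace $\fint_{\theta^{-1}R}|\nabla u_0|^2$ by $\fint_{\theta^{-1}R}|\nabla u|^2$. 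Writing $\nabla u_0 = \nabla u - \nabla w$, the $\nabla u$‑contribution is harmless; for the $\nabla w$‑contribution the matching is no longer at our disposal and a crude H\"older splitting on $|A-A_0|$ would only yield a fractional power of $\alpha_{A,B}(R)$. Instead I would invoke the boundary reverse H\"older inequality for the gradient (Lemma~\ref{RHforgflem.lem}) applied to $u$ --- legitimate precisely because $u$ solves the adjoint equation and vanishes on the flat boundary --- to upgrade the relevant $L^2$‑energies; together with the bound on $\fint_R|\nabla w|^2$ already obtained and the smallness of $\alpha_{A,B}(R)$ from the reduction, the $\nabla w$‑term is reabsorbed and we arrive at $\fint_{\theta^2 R}|\nabla (u-u_0)|^2 \le C\alpha_{A,B}(R)^2\fint_{\theta^{-1}R}|\nabla u|^2$.

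The main obstacle is exactly this enlargement step: controlling $\nabla(u-u_0)$ on the slightly larger box $\theta^{-1}R$, where the matching boundary condition has been lost, without either reintroducing the quantity one is trying to bound or incurring the loss of a fractional power of $\alpha_{A,B}(R)$. This is the technical difficulty highlighted in \cite{DLM}; here it is bypassed by exploiting the self‑improving boundary reverse H\"older estimate for the gradient --- which, in this parabolic, singular‑drift setting, has no interior analogue --- and this is what makes the argument sharper than \cite[Lemma~3.19]{DLM}.
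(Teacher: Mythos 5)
Your opening moves are sensible and match the paper's general toolbox, but the proof does not close, and the reason it does not close is exactly the difficulty you flag and then hand-wave past. After applying Lemma~\ref{dahlest.lem} you are left with $\fint_R (|A-A_0|^2|\nabla u_0|^2 + |B|^2|u_0|^2)\,dz$, and Lemma~\ref{constderests.lem} converts this to $C\alpha_{A,B}(R)^2 \fint_{\theta^{-1}R}|\nabla u_0|^2\,dz$. At that point you must replace $\nabla u_0$ by $\nabla u$ on the enlarged box $\theta^{-1}R$, where the Sobolev matching $u=u_0$ is not available. Your proposed fix --- ``invoke Lemma~\ref{RHforgflem.lem} for $u$ and reabsorb the $\nabla w$-term using smallness of $\alpha_{A,B}(R)$'' --- does not work as stated: splitting $\fint_{\theta^{-1}R}|\nabla u_0|^2 \le 2\fint_{\theta^{-1}R}|\nabla u|^2 + 2\fint_{\theta^{-1}R}|\nabla w|^2$ leaves a $\nabla w$-term on the \emph{larger} box $\theta^{-1}R$, which cannot be absorbed into the left-hand side $\fint_{\theta^2 R}|\nabla w|^2$ living on a strictly smaller box. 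The only thing you control is $\fint_R|\nabla w|^2$, and there is no Caccioppoli-type inequality going from a smaller to a larger box. The alternative you mention --- a crude H\"older split of $|A-A_0|$ against $\nabla u$ --- indeed yields only a fractional power of $\alpha_{A,B}(R)$, because the reverse H\"older exponent $p>2$ of Lemma~\ref{RHforgflem.lem} is only slightly larger than $2$, and you correctly dismiss it. So the gap is genuine.

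The paper resolves this by \emph{never} comparing $\nabla u_0$ and $\nabla u$ outside of $R$, and by lowering the order of the quantity being estimated before splitting. Its argument has two stages. First, viewing $w$ as a solution of the variable-coefficient equation $-\partial_t w - \div A^*\nabla w = \div F$ with $F = -Bw - Bu_0 + (A^*-A_0^*)\nabla u_0$, a Gehring-type self-improvement (using interior and boundary reverse H\"older inequalities in the spirit of \cite[Theorem~B.1]{MR3565414}) together with boundary Caccioppoli and Lemma~\ref{unoughtermsOK.lem} gives the intermediate bound
\begin{equation*}
\Bigl(\fint_{\theta^2 R}|\nabla w|^2\,dz\Bigr)^{1/2} \le \frac{C}{\lambda}\fint_{\theta R}|w|\,dz + C\,\alpha_{A,B}(R)\Bigl(\fint_R |\nabla u|^2\,dz\Bigr)^{1/2},
\end{equation*}
which trades the $L^2$ energy of $\nabla w$ for the \emph{lower-order} $L^1$ quantity $\lambda^{-1}\fint_{\theta R}|w|$. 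Second, regarding $w$ as a solution of the \emph{constant}-coefficient equation $-\partial_t w - \div A_0^*\nabla w = -\div(Bu) + \div((A^*-A_0^*)\nabla u)$ with the right-hand side now written in terms of $u$ (not $u_0$), Poincar\'e and the Calder\'on--Zygmund theory for $-\partial_t - \div A_0^*\nabla$ give $\lambda^{-1}\fint_R|w| \lesssim (\fint_R |B|^q|u|^q)^{1/q} + (\fint_R |A-A_0|^q|\nabla u|^q)^{1/q}$ for any $q>1$. The crucial point is that $q$ can be chosen with $1/q = 1/2 + 1/p$, where $p>2$ is the exponent of Lemma~\ref{RHforgflem.lem}; then H\"older splits $|A-A_0|^q|\nabla u|^q$ exactly into $(\fint_R |A-A_0|^2)^{1/2}(\fint_R|\nabla u|^p)^{1/p}$, i.e.\ the \emph{full} factor $\alpha_A(R)$ survives, and $(\fint_R|\nabla u|^p)^{1/p} \le C(\fint_{\theta^{-1}R}|\nabla u|^2)^{1/2}$ by the reverse H\"older inequality for $u$. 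The drift term is handled analogously with the one-dimensional Hardy inequality. Your intuition that Lemma~\ref{RHforgflem.lem} for $u$ is the key new ingredient is correct, but it must be deployed after passing to the $L^1$-level of $w$ and after rewriting the forcing in terms of $u$, not used to repair the $\nabla u_0 \leftrightarrow \nabla u$ comparison.
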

\begin{proof}
Let $w \coloneqq u-u_0$. We shall first prove the bound
\begin{align}\label{diffOK.lem.eq1}
	\left(\fint_{\theta^{2} R} |\nabla w|^{2} \, dz \right)^{1/2} 
	\le  \frac{C}{\lambda} \fint_{\theta R}  |w| \, dz + C \alpha_{A,B}( R) \left( \fint_{R} |\nabla u|^{2}  \, dz \right)^{1/2}
\end{align}
To this end, we view $w$ as a global solution to the equation with variable coefficients
\begin{align}
\label{eq1forw.eq}
\begin{split}
  -\partial_t w  - \div A^{*} \nabla w &= \div F  \quad \text{in $R$}, \\
    w &= \makebox[\widthof{$\div F$}][l]0  \quad \text{on $\partial_{par^*}R$,} 
\end{split}
\end{align}
where
\[
F \coloneqq F_1 + F_2 + F_3, \quad F_1 \coloneqq - Bw, \quad F_2 \coloneqq - Bu_0,  \quad   F_3 \coloneqq (A^*-A_0^*) \nabla u_0 .
\] 

If $Q=Q(z,r)$ is an interior parabolic cylinder with $4 Q = Q(z,4r) \subset R$, then for $q \coloneqq 2(n+2)/n$ and $p \coloneqq2$  the reverse H\"older inequality for local solutions (e.g.\ \cite[Proposition~4.4]{ABES_JMPA}) yields
\begin{equation} \label{rhi-solution.eq}
\left(\fint_{Q} |w|^q  \, dz \right)^{1/q}
\leq C   \left( \fint_{2Q} |w|^{p} \, dz \right)^{1/p}  +  r \left(\fint_{2Q} |F_2|^2  + |F_3|^{2} \, dz \right) ^{1/2}.
\end{equation}
The term $F_1$ does not appear explicitly since $4Q \subset R$ implies $|F_1| \leq C \eps_0 r|w|$ on $2Q$. If $Q = R(t,x,r)$ is a Carleson box with $4Q = R(t,x,4 r) \subset R$, then \eqref{rhi-solution.eq} also holds for the same exponents $q,p$ --- in this case, it is precisely the statement of the boundary reverse H\"older inequality (Lemma~\ref{bdry-rhi-solution.lem}) if we write $\div F_1$ on the left-hand side of the equation. The collection $\mathcal{Q}$ of boxes, for which we know \eqref{rhi-solution.eq} with these $q,p$ so far has a covering property: every  $Q \in \mathcal{Q}$ can be covered by $Q_1,\ldots,Q_N \in \mathcal{Q}$ where $N$ only depends on dimension, $Q_i$ is comparable to $Q$ in size and $4Q_i \subset 2Q$. Thus, the classical argument of \cite[Theorem~B.1]{MR3565414} shows that in \eqref{rhi-solution.eq} the right-hand exponent can be lowered to $p=1$ and of course we can lower $q$ to $q=2$ by H\"older's inequality. Finally, we cover $\theta^{3/2} R$ by sufficiently small boxes from $\mathcal{Q}$ and sum up \eqref{rhi-solution.eq} with $q=2$, $p=1$ to conclude
\begin{equation*}
	\left(\fint_{\theta^{3/2} R} |w|^2 \, dz \right)^{1/2}
	\leq C  \fint_{\theta R} |w| \, dz +  \lambda \left(\fint_{\theta R} |F_2|^2  + |F_3|^{2} \, dz \right) ^{1/2}.
\end{equation*}
The claim \eqref{diffOK.lem.eq1} follows by the boundary Caccioppoli inequality (Lemma~\ref{caccioppoli.lem}) to the left and Lemma~\ref{unoughtermsOK.lem} to control $|F_2|^2  + |F_3|^{2}$ on the right.

With \eqref{diffOK.lem.eq1} at hand, we shall complete the argument by proving
\begin{align}\label{diffOK.lem.eq2}
		\frac{1}{\lambda} \fint_{R}  |w| \, dz \leq  C \alpha_{A,B}(R) \left( \fint_{\theta^{-1} R} |\nabla u|^{2}  \, dz \right)^{1/2}.
\end{align}
To this end, we rearrange \eqref{eq1forw.eq} and view $w$ as a global solution to the equation
\begin{align}\label{u-u0 constant coefficient.eq}
\begin{split}
  -\partial_t w  - \div A_0^* \nabla w &= -\div Bu + \div  (A^*-A_0^*) \nabla u \quad \text{in $R$}, \\
    w &= \makebox[\widthof{$-\div Bu- \div  (A^*-A_0^*) \nabla u$}][l]0 \quad \text{on $\partial_{par}^{*}R$} 
\end{split}
\end{align}
with constant coefficients. By H\"older's inequality, Poincar\'e's inequality and the Calder\'on--Zygmund estimates (see \cite[Chapter IV]{MR0241822} or \cite[Theorem~4.1]{MR3985550}), 
\begin{align*}
	\frac{1}{\lambda} \fint_{R}  |w| \, dz
&\leq \frac{1}{\lambda} \biggl(\fint_{R}  |w|^q \, dz \biggr)^{1/q} \\
&\leq C \biggl(\fint_{R} |\nabla w|^{q} \, dz \biggr)^{1/q} \\
&\leq C \biggl(\fint_{R} |B|^{q}|u|^{q} \, dz \biggr)^{1/q} + C \biggl(\fint_{R}  |A-A_0|^{q}|\nabla u|^{q} \, dz \biggr)^{1/q}
\eqqcolon \I + \II
\end{align*}
holds for any $q > 1$ and a constant $C$ depending also on $q$. We fix $p>2$ to be the exponent provided by Lemma~\ref{RHforgflem.lem}, take $q$ such that $1/q = 1/2+1/p$ and estimate $\I$ and $\II$ by the right-hand side of \eqref{diffOK.lem.eq2} as follows.
First, by H\"older's and Hardy's inequalities,
\begin{align*}
\I \leq C \left( \fint_{R} |B|^{2}z_{n+1}^{2} \, dz \right)^{1/2}\left( \fint_{R} \frac{|u|^{p}}{z_{n+1}^{p}}\, dz \right)^{1/p} 
\leq C \alpha_B(R)^{1/2} \left( \fint_{R} |\nabla u|^{p} \, dz \right)^{1/p}
\end{align*}
and the correct bound follows from by Lemma~\ref{RHforgflem.lem}.
Similarly,
\begin{align*}
\II 
\leq C \left( \fint_{R} |A-A_0|^{2} \, dz  \right)^{1/2}\left( \fint_{R} |\nabla u|^{p} \, dz \right)^{1/p}  
\leq C \alpha_{A}(R)^{1/2} \left( \fint_{\theta^{-1}R} |\nabla u|^{2}  \, dz \right)^{1/2}. &\qedhere
\end{align*}
\end{proof}
\subsection{Estimates for Green functions}
\label{Estimates for Green function}
The following estimates for Green functions are a perturbed version of Lemma~\ref{dec-lb-jande.lem}.

\begin{lemma}\label{non-constant-DLM-main.lem}
Let $M_0$ be given. There exists $\varepsilon_0 > 0$, $\kappa_0 \ge 40$ and $C \ge 1$ such that whenever
$G$ is the Green function for an $(M_0,\varepsilon_0)$-parabolic operator
and $u \coloneqq G(a^{+}(t_0,x_0,\kappa \lambda_0),\bigdot)$ for some $(t_0,x_0,\lambda_0) \in \ree_{+}$ and $\kappa \geq \kappa_0$,  then the following estimates hold for all $(t,x,\lambda) \in R(t_0,x_0,\lambda_0)$ and all $\lamprime \in (0,\lambda]$:
\begin{enumerate}
  \item \emph{Decay of $J$}:
  \begin{equation}
    \label{non-constant-DLM-main-1.eq}
J_u(t,x,\lamprime) \le C \biggl( \frac{\lamprime}{\lambda} \biggr)^{2} J_u(t,x,\lambda) + C\biggl( \frac{\lambda}{\lamprime} \biggr)^{n+2} \alpha_{A,B}(t,x,\lambda)^2 E_u(t,x,\lambda).
  \end{equation}
  \item \emph{Non-degeneracy of $E$}:
  \begin{equation}
    \label{non-constant-DLM-main-2.eq}
  E_u(t,x,\lambda) \le C E_u(t,x,\lamprime) + C \biggl( \frac{\lambda}{\lamprime} \biggr)^{n+2} \alpha_{A,B}(t,x,\lambda)^2 E_u(t,x,\lambda).
\end{equation}
\end{enumerate}
\end{lemma}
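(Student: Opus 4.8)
The plan is to bootstrap Lemma~\ref{dec-lb-jande.lem} (the constant-coefficient estimates) to the Green function by absorbing the coefficient oscillation through the perturbation estimates of Section~\ref{Estimates for differences of solutions}. Fix $(t,x,\lambda) \in R(t_0,x_0,\lambda_0)$ and $\lamprime \in (0,\lambda]$. Set $A_0 \coloneqq \la{A}_{R(t,x,\lambda)}$ and let $u_0$ be the solution to the constant-coefficient adjoint equation \eqref{Estimates for differences of solutions.equ0} on $R(t,x,2\lambda)$ with $u_0 = u$ on $\partial_{par^*} R(t,x,2\lambda)$ and continuously on $Q^n(t,x,2\lambda)$, so that $u$ and $u_0$ are precisely the pair \eqref{Estimates for differences of solutions.equ0}, \eqref{Estimates for differences of solutions.equ} on $R(t,x,2\lambda)$ (the drift-absorption and Hardy observations from Section~\ref{subsec.weak.sol} guarantee the pair is well-defined and that $Bu, Bu_0 \in L^2$). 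The two terms we need to control are $J_u - J_{u_0}$ and $E_u - E_{u_0}$ on the small box $R(t,x,\lamprime)$, and for this the key input is that $\nabla(u-u_0)$ is small in $L^2$ on interior portions of $R(t,x,\lambda)$ by Lemma~\ref{diffOK.lem}, applied at the scale $\lamprime$ rather than $\lambda$ --- this is exactly where the loss $(\lambda/\lamprime)^{n+2}$ enters, since passing the small box $\theta^2 R(t,x,\lamprime)$ back to the reference box $R(t,x,\lambda)$ in the averaged estimate costs a volume ratio $(\lambda/\lamprime)^{n+2}$, and the gradient energy $E_u$ on $R(t,x,\lambda)$ is comparable to that on $R(t,x,2\lambda)$ by the basic estimates in Lemma~\ref{HL-Green-pm.lem}.

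For part (1), write $J_u(t,x,\lamprime)^{1/2} \le J_{u_0}(t,x,\lamprime)^{1/2} + (\text{error})^{1/2}$, where the error is an average over $R(t,x,\lamprime)$ of $|\nabla(u - u_0) - z_{n+1}\la{\partial_{n+1}(u-u_0)}|^2$, hence controlled by $\fint_{R(t,x,\lamprime)} |\nabla(u-u_0)|^2\, dz$ (the linear-correction term only improves the bound). Apply Lemma~\ref{dec-lb-jande.lem}(1) to $u_0$ to get $J_{u_0}(t,x,\lamprime) \le C(\lamprime/\lambda)^2 J_{u_0}(t,x,\lambda)$, then convert $J_{u_0}(t,x,\lambda)$ back to $J_u(t,x,\lambda)$ at the cost of the same gradient-difference error on $R(t,x,\lambda)$. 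For the gradient-difference error on $R(t,x,\lamprime)$, pull the average up to $R(t,x,\lambda)$ at a cost of $(\lambda/\lamprime)^{n+2}$ and apply Lemma~\ref{diffOK.lem} (with a fixed $\theta$, noting the hypotheses there are met since $u_0, u$ solve their equations on all of $R(t,x,2\lambda)$) to bound it by $C\alpha_{A,B}(t,x,\lambda)^2 E_u(t,x,\lambda)$; the gradient-difference error on $R(t,x,\lambda)$ itself is directly $\le C\alpha_{A,B}(t,x,\lambda)^2 E_u(t,x,\lambda)$ by Lemma~\ref{dahlest.lem} combined with Lemma~\ref{unoughtermsOK.lem}. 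Squaring and collecting gives \eqref{non-constant-DLM-main-1.eq}, where along the way one also uses \eqref{gradsqrcomp.eq} freely to interchange $E_u$ and $E_{u_0}$.

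For part (2), proceed identically: $E_u(t,x,\lambda)^{1/2} \le E_{u_0}(t,x,\lambda)^{1/2} + (\fint_{R(t,x,\lambda)}|\nabla(u-u_0)|^2)^{1/2}$, then $E_{u_0}(t,x,\lambda) \le C E_{u_0}(t,x,\lamprime)$ by Lemma~\ref{dec-lb-jande.lem}(2) --- which is legitimate precisely because $u_0$ was constructed with the Green-function boundary data that makes the non-degeneracy estimate available --- and then $E_{u_0}(t,x,\lamprime)^{1/2} \le E_u(t,x,\lamprime)^{1/2} + (\fint_{R(t,x,\lamprime)}|\nabla(u-u_0)|^2)^{1/2}$. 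The two gradient-difference errors are handled exactly as in part (1), yielding the two terms on the right of \eqref{non-constant-DLM-main-2.eq}. The main obstacle is bookkeeping the scales carefully: Lemma~\ref{diffOK.lem} is stated with a fixed dilation parameter $\theta$ relating $\theta^2 R$, $R$ and $\theta^{-1}R$, so to deploy it at scale $\lamprime$ inside $R(t,x,\lambda)$ one must either cover $R(t,x,\lamprime)$ by boxes of radius comparable to $\lamprime$ on which the lemma applies and which have their $\theta^{-1}$-dilates inside $R(t,x,\lambda)$ (harmless when $\lamprime \le \lambda/2$, and the case $\lamprime > \lambda/2$ follows by the triangle inequality as in Lemma~\ref{dec-lb-jande.lem}), keeping track that the volume normalization produces exactly the factor $(\lambda/\lamprime)^{n+2}$ and no worse --- here $n+2$ is the parabolic homogeneous dimension of $\ree_+$, matching the exponent in \eqref{rhi-solution.eq}. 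Everything else is a routine application of the cited lemmas.
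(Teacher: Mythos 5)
Your overall strategy is the paper's: compare $u$ with a constant-coefficient adjoint solution $u_0$ sharing its data on an adjoint parabolic boundary, apply Lemma~\ref{dec-lb-jande.lem} to $u_0$, absorb the discrepancy through the perturbation estimates of Section~\ref{Estimates for differences of solutions}, and let the factor $(\lambda/\lamprime)^{n+2}$ arise from inflating averages from the small box to the big one. However, two of your steps do not work as written. First, the claim that the gradient-difference error on the full box $R(t,x,\lambda)$ is ``directly'' controlled by Lemma~\ref{dahlest.lem} combined with Lemma~\ref{unoughtermsOK.lem} fails: Lemma~\ref{dahlest.lem} bounds $\int|\nabla(u-u_0)|^2$ by $\int_{R'}|A-A_0|^2|\nabla u_0|^2+|B|^2|u_0|^2$ over the \emph{entire} box $R'$ on which the pair is posed, whereas Lemma~\ref{unoughtermsOK.lem} controls that integrand only in average over the strictly interior box $\theta R'$, because the pointwise bounds on $\nabla u_0$ from Lemma~\ref{constderests.lem} degenerate at the adjoint parabolic boundary (and $\nabla u$ has no pointwise bound at all). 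Bridging exactly this gap is the whole point of Lemma~\ref{diffOK.lem}, which accordingly yields the estimate only on the shrunken box $\theta^2R'$ in terms of the enlarged box $\theta^{-1}R'$; there is no full-box version, and the paper never needs one.

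Second, your auxiliary box goes the wrong way in scale. Posing $u_0$ on $R(t,x,2\lambda)$ lets you quote Lemma~\ref{dec-lb-jande.lem}(2) at scale $\lambda$ directly, but every legitimate application of Lemma~\ref{diffOK.lem} must then be made with $R'=R(t,x,2\lambda)$ (the matching $u=u_0$ holds only on $\partial_{par^*}R(t,x,2\lambda)$, so covering $R(t,x,\lamprime)$ by $\lamprime$-scale boxes is not an option either, and the lemma requires $A_0=\la{A}_{R'}$, not $\la{A}_{R(t,x,\lambda)}$ as you set). Its conclusion then carries $\alpha_{A,B}(t,x,2\lambda)$ and $E_u$ at scale $\theta^{-1}\cdot 2\lambda>2\lambda$. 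The $E_u$-mismatch is repairable via Lemma~\ref{HL-Green-pm.lem}, as you note, but $\alpha_{A,B}(t,x,2\lambda)$ is \emph{not} dominated by $\alpha_{A,B}(t,x,\lambda)$ (oscillation averaged over a larger box is not controlled by that over a smaller one), so you prove a different inequality than \eqref{non-constant-DLM-main-1.eq}--\eqref{non-constant-DLM-main-2.eq}. The paper avoids both issues by posing the auxiliary problem on the sub-box $R(t,x,\lambda/2)$ with $A_0=\la{A}_{R(t,x,\lambda/2)}$: Lemma~\ref{dec-lb-jande.lem} is then used between the scales $\lamprime$ and $\lambda/4$, every error term is an $E_{u-u_0}$ at scale at most $\lambda/4$ handled by Lemma~\ref{diffOK.lem} with all boxes (including the $\theta^{-1}$-enlargement) inside $R(t,x,\lambda)$, so the resulting oscillation numbers are at scales $\le\lambda$ and hence dominated by $\alpha_{A,B}(t,x,\lambda)$; the only extra ingredient is the comparability $E_u(t,x,\lambda)\le C\,E_u(t,x,\lambda/4)$ supplied by Lemma~\ref{HL-Green-pm.lem}. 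Restructured in this way, your argument becomes the paper's proof.
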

\begin{proof}
We begin with the first item. As in the proof of Lemma~\ref{dec-lb-jande.lem} we can assume $\lamprime \leq \lambda/4$.
Let $A_0 \coloneqq \la{A}_{R(t,x,\lambda/2)}$ and let $u_0$ solve
\begin{align*}
  -\partial_t u_0 - \div A_0^* \nabla u_0 &= 0 \quad \text{in $R(t,x,\lambda/2)$},\\
    u_0 &= u \quad \text{in $\partial_{par^*} R(t,x,\lambda/2)$} .
\end{align*}
Such a solution exists --- it is given by $u_0 \coloneqq u-w$, where $w$ is the solution to the problem \eqref{u-u0 constant coefficient.eq} with constant coefficients on $R\coloneqq R(t,x,\lambda/2)$ and the latter is solvable (e.g.~\cite[Chapter 7, Theorem~3]{Evans}). Now, we split
\begin{align} \label{non-constant-DLM-main.lem.eq1}
J_u(t,x,\lamprime) \le 2J_{u_0}(t,x,\lamprime)+ 2J_{u-u_0}(t,x,\lamprime)
\end{align}
and estimate both energies separately. By Lemma~\ref{dec-lb-jande.lem},  the triangle inequality and Lemma~\ref{diffOK.lem},
\begin{align*}
J_{u_0}(t,x,\lamprime) 
&\leq C \biggl( \frac{\lamprime}{\lambda} \biggr)^{2} J_{u_0}(t,x,\lambda/4)  \\
&\leq  C \biggl( \frac{\lamprime}{\lambda} \biggr)^{2} \bigl(J_{u}(t,x,\lambda/4) + E_{u-u_0}(t,x,\lambda/4) \bigr)\\
&\leq C \biggl( \frac{\lamprime}{\lambda} \biggr)^{2} \bigl(J_{u}(t,x,\lambda/4) +  \alpha_{A,B}(t,x,\lambda)^{2} E_u(t,x,\lambda)\bigr).
\end{align*}
To bound the remaining term in \eqref{non-constant-DLM-main.lem.eq1},
we estimate 
\begin{align*}
J_{u-u_0}(t,x,\lamprime) 
&\leq E_{u-u_0}(t,x,\lamprime) \\
&\leq \left( \frac{\lambda}{\lamprime} \right)^{n+2}E_{u-u_0}(t,x,\lambda/4) \\
&\leq \biggl( \frac{\lambda}{\lamprime} \biggr)^{n+2}\alpha_{A,B}(t,x,\lambda)^{2} E_u(t,x,\lambda), 
\end{align*}
where the last estimate follows from Lemma~\ref{diffOK.lem} as before.
This yields the first item.

As for the second item, Lemma~\ref{HL-Green-pm.lem} and the triangle inequality yield
\begin{align}\label{non-constant-DLM-main.lem.eq2}
E_u(t,x, \lambda) 
	\leq C E_u(t,x,\lambda/4) 
	\leq C \bigl( E_{u_0}(t,x,\lambda/4)  +  E_{u-u_0}(t,x,\lambda/4)  \bigr).
\end{align}
In the proof of the first assertion, we have already seen that
\[
E_{u-u_0}(t,x, \lambda/4)
 \le C \alpha_{A,B}(t,x,\lambda)^2 E_u(t,x,\lambda),
\]
which is of the desired form. To estimate the first term on the right of \eqref{non-constant-DLM-main.lem.eq2}, we use Lemma~\ref{dec-lb-jande.lem}, the triangle inequality and once again Lemma~\ref{diffOK.lem}:
\begin{align*}
E_{u_0}(t,x,\lambda/4)
&\leq C E_{u_0}(t,x,\lamprime) \\
&\leq C \bigl(E_{u}(t,x,\lamprime) + E_{u-u_0}(t,x,\lamprime) \bigr) \\
&\leq C E_{u}(t,x,\lamprime) + C \biggl( \frac{\lambda}{\lamprime} \biggr)^{n+2} E_{u-u_0}(t,x,\lambda/4) \\
&\leq C E_{u}(t,x,\lamprime) + C \biggl( \frac{\lambda}{\lamprime} \biggr)^{n+2} \alpha_{A,B}(t,x,\lambda)^2 E_{u}(t,x,\lambda). \qedhere
\end{align*}
\end{proof}
We are in  position to state and prove the first central result that was largely inspired by the elliptic counterpart in \cite{DLM}. Under an a priori smallness on the coefficients in terms of the $\alpha_{A,B}$-numbers, we obtain a quantitative Carleson measure estimate for the energy ratios
\begin{align*}
	\beta_u(t,x,\lambda) = \left(\frac{J_{u} (t,x,\lambda)}{E_{u} (t,x,\lambda)}\right)^{1/2}
\end{align*}
from Definition~\ref{JandEdefs.def}. In this context, we also use the measure
\begin{align*}
	\mu_{\beta_u} \coloneqq \beta_u(t,x,\lambda)^2 \, \frac{dt \, dx \, d \lambda}{\lambda}.
\end{align*}

\begin{theorem}\label{green-main-estimate.thm}
Given $M_0$, let $\varepsilon_0>0$ and $\kappa_0 \geq 40$ be as in Lemma~\ref{non-constant-DLM-main.lem}. There exists constants $\delta_0 >0$, $\theta_0 \in (0, 1/4]$, $\gamma > 0$ and $C$ such that whenever $G$ is the Green function for an $(M_0,\varepsilon_0)$-parabolic operator
and $u \coloneqq G(a^{+}(t_0,x_0,\kappa \lambda_0),\bigdot)$ for some $(t_0,x_0,\lambda_0) \in \ree_{+}$ and $\kappa \geq \kappa_0$,  then the uniform smallness
\[
\sup_{(s,y,\mu) \in R(t,x,\lambda)  } \alpha_{A,B}(s,y,\mu)^{2} \le \delta_0
\]
for fixed $(t,x,\lambda) \in R(t_0,x_0,\lambda_0)$  implies for all $\theta \leq \theta_0$ the
\begin{enumerate}
	\item pointwise Carleson bound
	\[
	\frac{1}{(\theta \lambda)^{n+1}} \iiint_{R(t,x,\theta \lambda)}\beta(s,y,\mu)^{2} \, \frac{ds \, dy \, d\mu}{\mu}  \le C \bigl( \theta^{\gamma} + \no{\nu_{A,B}}_{\mathcal{C} (R(t,x, 9\lambda))}\bigr) 
	\]
	\item and the local Carleson measure estimate
	\[
	\no{\mu_{\beta_u}}_{\mathcal{C}(R(t,x,\theta^2 \lambda))} \leq  C \bigl( \theta^{\gamma} + \no{\nu_{A,B}}_{\mathcal{C} (R(t,x, \lambda))}\bigr)
	\]
\end{enumerate}
\end{theorem}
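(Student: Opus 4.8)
\noindent\emph{Proof plan.} The strategy is to upgrade the two one-sided scale-comparisons of Lemma~\ref{non-constant-DLM-main.lem} to a genuine \emph{self-improvement} for $\beta_u$, iterate it at one fixed dilation ratio $\theta_0$ down to the boundary, and integrate the result against $dz/z_{n+1}$. The elementary but essential starting point is that $\beta_u\le 1$ everywhere: expanding the square in $J_u$ and writing $\nabla u=(\nabla' u,\partial_{n+1}u)$ shows that the vertical slot only contributes the variance of $\partial_{n+1}u$, whence $J_u\le E_u$. Next I would fix a Carleson point $(t',x',\ell)$ where Lemma~\ref{non-constant-DLM-main.lem} applies and with $\alpha_{A,B}(t',x',\ell)^2\le\delta_0$; Lemma~\ref{non-constant-DLM-main.lem}(2) with $\lamprime=\theta\ell$, after absorbing the $\alpha_{A,B}$-term to the left — admissible once $\delta_0$ is small relative to $\theta^{n+2}$ — gives $E_u(t',x',\ell)\le C\,E_u(t',x',\theta\ell)$, and then dividing part (1) by $E_u(t',x',\theta\ell)$ yields
\[
\beta_u(t',x',\theta\ell)^2\le \tilde C\theta^2\,\beta_u(t',x',\ell)^2+\tilde C\theta^{-(n+2)}\,\alpha_{A,B}(t',x',\ell)^2,\qquad \tilde C=\tilde C(M_0,n).
\]
Because this absorption pins $\delta_0$ to the ratio, I would commit to a \emph{fixed} ratio: first choose $\theta_0\le\tfrac{1}{10}$, depending only on $M_0,n$, with $\tilde C\theta_0^2\le\tfrac{1}{2}$, then $\gamma>0$, depending only on $M_0,n$, with $\tilde C\theta_0^2\le\theta_0^{\gamma}$, and finally $\delta_0>0$, depending only on $M_0,n$, small enough that the above is valid at ratio $\theta_0$.

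Iterating the self-improvement at ratio $\theta_0$ from the top scale $\ell=\lambda$ down through $\theta_0^{k}\lambda$, using $\beta_u\le1$ on the leading term and one extra application of Lemma~\ref{non-constant-DLM-main.lem} to interpolate to arbitrary heights, I expect a pointwise bound of the form
\[
\beta_u(s,y,\mu)^2\le C\Bigl(\frac{\mu}{\lambda}\Bigr)^{\gamma}+C\,\theta_0^{-(n+2)}\sum_{\mu\le\rho\le\lambda}\Bigl(\frac{\mu}{\rho}\Bigr)^{\sigma}\alpha_{A,B}(s,y,\rho)^2
\]
for all $(s,y)\in Q^n(t,x,\lambda)$ and $0<\mu\le\lambda$, where the sum runs over $\theta_0$-adic scales and $\sigma=\sigma(M_0,n)>0$ is the geometric damping exponent determined by $\theta_0^{\sigma}=\tilde C\theta_0^2$. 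Smallness $\alpha_{A,B}(s,y,\rho)^2\le\delta_0$ is available along the iteration because $(s,y,\rho)\in R(t,x,\lambda)$, and invoking Lemma~\ref{non-constant-DLM-main.lem} at these points is legitimate after a harmless enlargement of the ambient reference box, which its proof permits since it only appeals to Lemma~\ref{HL-Green-pm.lem} on $R(t_0,x_0,4\lambda_0)$.

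For part (1) I would integrate this bound over $R(t,x,\theta\lambda)=Q^n(t,x,\theta\lambda)\times(0,2\theta\lambda)$ against $dz/z_{n+1}$. The leading term contributes $\gamma^{-1}(2\theta)^{\gamma}|Q^n(t,x,\theta\lambda)|\lesssim\theta^{\gamma}(\theta\lambda)^{n+1}$, since $\int_0^{2\theta\lambda}(\mu/\lambda)^{\gamma}\,d\mu/\mu$ converges. For the error term I split the $\rho$-sum: for $\rho\le2\theta\lambda$, integrating $(\mu/\rho)^{\sigma}\,d\mu/\mu$ over $\mu\in(0,\rho)$ gives a bounded factor times $\sum_{\rho\le2\theta\lambda}\alpha_{A,B}(s,y,\rho)^2\approx\int_0^{2\theta\lambda}\alpha_{A,B}(s,y,\rho)^2\,d\rho/\rho$, and integrating in $(s,y)$ this is $\approx\nu_{A,B}(R(t,x,\theta\lambda))\le\no{\nu_{A,B}}_{\mathcal{C}(R(t,x,9\lambda))}(\theta\lambda)^{n+1}$ — here it is crucial that the dyadic slabs at the various heights are \emph{disjoint} subsets of the single Carleson box $R(t,x,\theta\lambda)$, so no power of $\theta$ is lost; for $\rho\in(2\theta\lambda,\lambda]$ one picks up a factor $(2\theta\lambda/\rho)^{\sigma}$, and since $\sum_{\rho>2\theta\lambda}(2\theta\lambda/\rho)^{\sigma}=O(1)$ over $\theta_0$-adic scales and $\alpha_{A,B}(s,y,\rho)^2\lesssim\no{\nu_{A,B}}_{\mathcal{C}(R(t,x,9\lambda))}$ by Proposition~\ref{whitney-carleson.prop}, these scales contribute $\lesssim\no{\nu_{A,B}}_{\mathcal{C}(R(t,x,9\lambda))}(\theta\lambda)^{n+1}$ as well. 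As $\theta_0^{-(n+2)}$ is now a fixed constant, dividing by $(\theta\lambda)^{n+1}$ proves part (1). For part (2), given a Carleson box $R'=R(t',x',r')\subset R(t,x,\theta^2\lambda)$ I would apply part (1) with base point $(t',x',r'/\theta)$; since $r'\le\theta^2\lambda$ and $\theta\le\theta_0\le\tfrac{1}{10}$ one checks $R(t',x',r'/\theta)\subset R(t,x,\lambda)$ and $R(t',x',9r'/\theta)\subset R(t,x,\lambda)$, so the smallness hypothesis is inherited and $\mu_{\beta_u}(R')\le C(\theta^{\gamma}+\no{\nu_{A,B}}_{\mathcal{C}(R(t,x,\lambda))})|Q^n(R')|$; taking the supremum over such $R'$ gives part (2).

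The hard part is really the self-improvement together with the bookkeeping in the integration. The absorption step can only be carried out at a price tied to the dilation ratio, which locks the iteration to a single fixed ratio $\theta_0$; one must then recognize that the seemingly dangerous factor $\theta_0^{-(n+2)}$ is merely a fixed constant multiplying Carleson integrals of $\alpha_{A,B}^2$ over the \emph{small} box $R(t,x,\theta\lambda)$ — via the disjointness of the dyadic slabs — rather than over $R(t,x,\lambda)$, the geometric weight $(\mu/\rho)^{\sigma}$ being exactly what keeps the large-scale contributions summable. The remaining work — passing from $\theta_0$-adic to arbitrary heights, the comparisons of $\alpha_{A,B}$ at nearby scales, and the many containments of averaging regions — is routine but relies on the slack in Lemma~\ref{non-constant-DLM-main.lem} and on the enlarged radius $9\lambda$ in the statement.
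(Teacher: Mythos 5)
Your proposal follows essentially the same approach as the paper's proof: derive the self-improvement $\beta(\theta_0\mu)^2 \le \tfrac12\beta(\mu)^2 + C\,\alpha_{A,B}(\mu)^2$ from the two scale-comparisons of Lemma~\ref{non-constant-DLM-main.lem} after pinning the absorption threshold $\delta_0$ to a fixed ratio $\theta_0$, iterate it down scales, and integrate against $d\mu/\mu$ to land on the Carleson bound. The paper's bookkeeping is marginally cleaner --- it iterates the spatially averaged quantity $b(\mu)\coloneqq\int_{Q^n(\theta R)}\beta(\bigdot,\mu)^2$ directly via a change of variables and a fixed-point absorption of $\tfrac12 I$ (with the a priori finiteness of $I$ secured by truncation), which sidesteps your pointwise interpolation to non-$\theta_0$-adic heights and the discrete-sum-versus-$d\rho/\rho$-integral comparison for $\alpha_{A,B}$, and it contents itself with the cruder bound $\beta\le 2$ from the triangle inequality in place of your sharper $\beta_u\le 1$.
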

\begin{proof}
Assertion (2) follows from (1) applied to subregions: Indeed, we only have to take $\theta_0 < 1/36$ in order to guarantee that $R(t',x',\theta \lambda') \subset R(t,x,\theta^2 \lambda)$  implies $R(t',x',9 \lambda') \subset R(t,x,\lambda)$. Hence, it suffices to prove (1). 

To this end, let $\sigma \in (0,1/4]$ be a scaling parameter that we shall fix momentarily. The other constants in question will be taken as $\delta_0 \coloneqq \sigma^{n+2}/2$, $\theta_0 \coloneqq \sigma$ and $\gamma \coloneqq \log(1/2)/\log(\sigma)$. Throughout the proof let $(s,y,\mu) \in R(t, x, \lambda) \eqqcolon R$. With the estimates above we closely follow \cite[Section 4]{DLM}.

From \eqref{non-constant-DLM-main-1.eq}, \eqref{non-constant-DLM-main-2.eq} and the assumption, we obtain
\begin{align*}
J_u(s,y,\sigma \mu) &\leq C_1 \sigma^2  J_u(s,y,\mu) + C_2 \sigma^{-n-2} \alpha_{A,B}(s,y,\mu)^2 E_u(s,y,\mu), \\
E_u(s,y,\sigma \mu) &\geq C_3(1-  \delta_0 \sigma^{-n-2}) E_u(s,y,\mu).
\end{align*}
We decide on $\sigma^2 \leq C_3/(4C_1)$, so that when dividing the two inequalities, we find that $\beta$ shrinks according to the rule
\begin{align}\label{green-main-estimate.thm.beta-shrinks}
\beta(s,y, \sigma \mu)^2 \leq \frac{1}{2} \beta(s,y,\mu)^2 + C \alpha_{A,B}(s,y,\mu)^2.
\end{align}
For brevity, let us now write
\begin{align*}
b(\mu) &\coloneqq  \int_{Q^{n}(\theta R)} \beta(z+(0,0,\mu))^{2} \, dz,\\
a(\mu) &\coloneqq \int_{Q^{n}(\theta R)} \alpha_{A,B}(z+(0,0,\mu))^{2} \, dz.
\end{align*}
We need to prove that
\begin{align}\label{green-main-estimate.thm.goal}
\I \coloneqq \int_{0}^{2\theta \lambda} b(\mu) \,  \frac{d\mu}{\mu} 
\leq C(\theta \lambda)^{n+1}  (\theta^\gamma + \no{\nu_{A,B}}_{\mathcal{C} (R)}).
\end{align}
To this end, we split the integral $\I$ at height $\mu = 2\sigma \theta \lambda$ and estimate both terms separately. By a change of variables and \eqref{green-main-estimate.thm.beta-shrinks},
\begin{align*}
\int_{0}^{2\sigma \theta \lambda} b(\mu) \,  \frac{d\mu}{\mu}
= \int_{0}^{2\theta \lambda} b(\sigma \mu) \,  \frac{d\mu}{\mu}
&\leq \frac{1}{2} \I + 
 C \int_{0}^{2\theta \lambda} a (\mu) \, \frac{d\mu}{\mu}\\
& \leq \frac{1}{2} \I + C (\theta \lambda)^{n+1} \no{\nu_{A,B}}_{\mathcal{C} (R)}. 
\end{align*}
For the other piece, we let $k \geq 1$ be the integer with $\sigma^{k+1} \le \theta < \sigma^{k}$, so that by definition of $\gamma$ we have $2^{-(k+1)} \leq \theta^\gamma$. A change of variables and $k$-fold iteration of \eqref{green-main-estimate.thm.beta-shrinks} yield,
\begin{align*}
\int_{2\sigma \theta \lambda}^{2\theta \lambda} b(\mu) \,  \frac{d\mu}{\mu}
\leq \int_{2\sigma^2 \lambda}^{ 2\lambda} b( \sigma^{k} \mu) \, \frac{d\mu}{\mu}
&\leq  \int_{2\sigma^2  \lambda}^{ 2\lambda} \biggl(2^{-k}  b(  \mu) +  C \sum_{j=0}^{k}2^{-j} a(\sigma^{k-j} \mu)\biggr)  \,  \frac{d\mu}{\mu} \\
&\leq C(\theta \lambda)^{n+1}  \bigl(\theta^\gamma + \no{\nu_{A,B}}_{\mathcal{C} (9R)}\bigr),
\end{align*}
where the last step follows from the uniform pointwise bounds $\beta(s,y,\mu) \leq 2$ (triangle inequality) and $\alpha_{A,B}(s,y,\mu) \leq C \no{\nu_{A,B}}_{\mathcal{C} (9R)}$ (Proposition~\ref{whitney-carleson.prop}). The previous two estimates lead to \eqref{green-main-estimate.thm.goal} under the a priori assumption that $\I$ is finite. 

The a priori assumption can be removed by repeating the above with $1_{(\varepsilon,\infty)}(\mu)b(\mu)$ in place of  $b(\mu)$ for some fixed $\varepsilon > 0$ and passing to the limit as $\varepsilon \to 0$.
\end{proof}
\subsection{Estimates for the parabolic measure}
\label{Estimates for the parabolic measure}
We conclude this section by giving a lower bound for the energy ratio $\beta$
in terms of the parabolic measure.
This is a straightforward generalization of an analogous result for elliptic measures in \cite{BTZ}. For functions $f$ defined in the variables $(t,x) \in \R^n$ we use the parabolic dilations
\begin{align}\label{parabolic-dilation.eq}
	f_\lambda(t,x) \coloneqq \frac{1}{\lambda^{n+1}} f\left( \frac{t}{\lambda^{2}}, \frac{x}{\lambda} \right) \qquad (\lambda > 0).
\end{align}

\begin{lemma}\label{parab-measure-main.lem}
Given $M_0$, let $\varepsilon_0>0$ and $\kappa_0 \geq 40$ be as in Lemma~\ref{non-constant-DLM-main.lem} and let $\phi, \psi: \R^n \to \R$ be smooth functions with
\begin{align*}
 \phi \geq 1 \text{ on } Q^n(0,0,\tfrac{1}{2}) \quad \text{and} \quad  \phi, \partial_i \psi \in C_c^\infty(Q^{n}(0,0,\tfrac{3}{4})) \text{ for } i = 1, \ldots, n.
\end{align*}
There exists a constant $C$ such that the following holds. Let $G$ be the Green function for an $(M_0,\varepsilon_0)$-parabolic operator and set $u \coloneqq G(p,\bigdot)$, where $p \coloneqq a^{+}(t_0,x_0,\kappa\lambda_0)$ for some $(t_0,x_0,\lambda_0) \in \ree_{+}$, $\kappa \geq \kappa_0$. Then the associated parabolic measure $\omega \coloneqq \omega^p$ satisfies for all $(t,x,\lambda) \in R(t_0,x_0,\lambda_0)$ the estimates
\begin{align}
\label{energy-general.eq}
|(\partial_t \psi)_\lambda *\omega)(t,x)|  + |(\nabla_x \psi)_\lambda *\omega)(t,x)|    &\le C \sqrt{ J_u(t,x,\lambda)  +  E_u(t,x,\lambda)  \alpha_{A,B}(t,x,\lambda)^2 } , \\
\label{ratio-general.eq}
\frac{|(\partial_t \psi)_\lambda *\omega)(t,x)|}{(\phi_\lambda *\omega)(t,x)}  + \frac{  |(\nabla_x \psi)_\lambda *\omega)(t,x)|}{(\phi_\lambda *\omega)(t,x)}    &\le C  \bigl(\beta_u(t,x,\lambda)  +    \alpha_{A,B}(t,x,\lambda)  \bigr)
\end{align}   
Moreover, $(t,x,\lambda)$ on the right-hand sides can be replaced by any $(t',x',\lambda')$ that satisfies $d_{n+1}((t',x',\lambda'), (t,x,\lambda)) < \lambda/4$.
\end{lemma}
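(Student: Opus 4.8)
The plan is to establish the two displayed estimates by connecting the convolutions $(\partial_t\psi)_\lambda * \omega$ and $(\nabla_x\psi)_\lambda * \omega$ to integrals of the Green function $u = G(p,\cdot)$ via the Riesz formula \eqref{Rieszformeq.eq}, and then to recognize the resulting bulk integrals as quantities controlled by $J_u$ and $\alpha_{A,B}^2 E_u$. First I would fix $(t,x,\lambda) \in R(t_0,x_0,\lambda_0)$ and reduce to the model scale $(0,0,1)$ by the parabolic rescaling underlying \eqref{parabolic-dilation.eq}: writing $\Psi$ for a smooth compactly supported extension of $\psi$ (suitably dilated and translated), the convolutions in the statement are, up to normalization, of the form $\int_{\partial\ree_+} (\partial_t\Psi \text{ or } \nabla_x\Psi)(w)\,d\omega^p(w)$, and these match the boundary integral term in the Riesz formula applied to the function $\partial_t\Psi$ or a component of $\nabla_x\Psi$. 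Crucially, such a test function vanishes near the lateral boundary region one wants to avoid and is supported in (the lift of) $Q^n(t,x,\tfrac34\lambda)$, so by the Riesz formula
\[
(\partial_t\psi)_\lambda * \omega^p(t,x) = - \la{\cL^* G(p,\cdot), \widetilde{\partial_t\Psi}} + (\text{derivative of }\Psi\text{ evaluated against }\omega\text{ — which is }0\text{ by support}),
\]
the point being that one is left with a pairing of $\cL^* u$ against a smooth compactly supported function, which after integrating by parts becomes $\int A^*\nabla u \cdot \nabla(\text{test}) + \int Bu\cdot \nabla(\text{test}) - \int u\,\partial_t(\text{test})$, all integrals taken over a Carleson region comparable to $R(t,x,\lambda)$.

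The second step is the heart of the matter: one must see that this bulk pairing is bounded by $\sqrt{J_u(t,x,\lambda) + E_u(t,x,\lambda)\alpha_{A,B}(t,x,\lambda)^2}$. The trick — following \cite{BTZ} and the elliptic philosophy — is to subtract off the affine part: since the test function used in the Riesz formula can be chosen so that pairing the \emph{constant-coefficient, linear-in-$z_{n+1}$} comparison function $z_{n+1}\la{\partial_{n+1}u}_{R(t,x,\lambda)}$ against it produces exactly the term that would be generated by a pure linear profile (and hence either vanishes or is explicitly computable), one can replace $u$ by $u - z_{n+1}\la{\partial_{n+1}u}_R$ inside $\nabla u$ at the cost of controlled errors. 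Concretely, $\nabla u = \nabla(u - z_{n+1}\la{\partial_{n+1}u}_R) + (0,\ldots,0,\la{\partial_{n+1}u}_R)$; the first piece is controlled in $L^2$ by $\sqrt{J_u}$ by Definition~\ref{JandEdefs.def}, while the contribution of the constant vector field $(0,\ldots,0,\la{\partial_{n+1}u}_R)$ against $\nabla(\text{test})$ is arranged to cancel against a corresponding contribution from the time-derivative term $\int u\,\partial_t(\text{test})$ using the precise structure of $\cL^*$ applied to an affine function (an affine function of $z_{n+1}$ alone is $\cL_0^*$-harmonic, so its pairing is a pure boundary term that is absorbed). The remaining terms — the $(A^*-A_0^*)\nabla u$ error from replacing the variable matrix by its average, and the drift term $\int Bu\cdot\nabla(\text{test})$ — are exactly of size $\alpha_A(t,x,\lambda)\sqrt{E_u}$ and $\alpha_B(t,x,\lambda)\sqrt{E_u}$ respectively, by Cauchy--Schwarz together with the one-dimensional Hardy inequality for the drift (as in Section~\ref{subsec.weak.sol}, $\int |Bu|^2 \lesssim \alpha_B^2 \fint|\partial_{n+1}u|^2 \lesssim \alpha_B^2 E_u$) and the CFMS-type equivalences of Lemma~\ref{HL-Green-pm.lem} to relate Whitney-scale averages of $u/z_{n+1}$ to $\sqrt{E_u}$. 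This yields \eqref{energy-general.eq}.

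For the ratio estimate \eqref{ratio-general.eq}, I would divide \eqref{energy-general.eq} by $(\phi_\lambda * \omega)(t,x)$ and use the lower bound $(\phi_\lambda*\omega)(t,x) \gtrsim \omega^p(Q^n(t,x,\tfrac12\lambda))/\lambda^{n+1}$ (valid since $\phi \ge 1$ on $Q^n(0,0,\tfrac12)$), which by the CFMS-estimate (Lemma~\ref{HL-Green-pm.lem}(3)) and the gradient estimate (Lemma~\ref{HL-Green-pm.lem}(5)) is comparable to $G(p,(t,x,\lambda))/\lambda \approx \sqrt{E_u(t,x,\lambda)}$. Dividing then turns $\sqrt{J_u}/\sqrt{E_u} = \beta_u$ and $\sqrt{E_u}\,\alpha_{A,B}/\sqrt{E_u} = \alpha_{A,B}$, giving exactly the claimed right-hand side. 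Finally, the ``moreover'' clause — that $(t,x,\lambda)$ on the right may be replaced by any nearby $(t',x',\lambda')$ with $d_{n+1}((t',x',\lambda'),(t,x,\lambda)) < \lambda/4$ — follows because all of $J_u$, $E_u$, and (by Proposition~\ref{whitney-carleson.prop}) $\alpha_{A,B}$ are, up to dimensional constants, mutually comparable over such a family of nearby regions: the Carleson boxes $R(t',x',\lambda')$ and $R(t,x,\lambda)$ are each contained in a fixed dilate of the other, and the strong Harnack inequality together with the gradient estimate of Lemma~\ref{HL-Green-pm.lem} makes $E_u$ doubling across them, while $J_u$ is dominated by $E_u$ on the larger box via the already-established decay estimate in Lemma~\ref{non-constant-DLM-main.lem}.

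The main obstacle I anticipate is the bookkeeping in the second step: producing the \emph{exact} cancellation of the linear profile so that the genuinely uncontrolled quantity is $\nabla(u - z_{n+1}\la{\partial_{n+1}u}_R)$ rather than $\nabla u$ itself — i.e., getting $J_u$ and not merely $E_u$ on the right-hand side of \eqref{energy-general.eq}. This requires choosing the Riesz-formula test functions $\widetilde{\partial_t\Psi}$, $\widetilde{\nabla_x\Psi}$ with enough care that pairing them against $\cL_0^*$ of an affine function of $z_{n+1}$ produces precisely a boundary term matching $(\partial_t\psi)_\lambda*\omega$ or $(\nabla_x\psi)_\lambda*\omega$ evaluated on the affine profile, so that the whole linear part drops out of the bulk estimate cleanly; the time-derivative term $\int u\,\partial_t(\text{test})$ and the normalization by $\lambda^{n+1}$ versus $\lambda^n$ in the parabolic dilation are where sign and scaling errors are easiest to make.
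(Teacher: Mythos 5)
Your overall route is the paper's: apply the Riesz formula \eqref{Rieszformeq.eq} to the \emph{tangential} derivatives $\partial_i\Psi$ of a compactly supported extension $\Psi(s,y,\mu)=\lambda^{-(n+1)}\psi(\frac{t-s}{\lambda^2},\frac{x-y}{\lambda})h(\mu/\lambda)$ (the value at the pole $p$ vanishes by support, so the boundary convolution equals minus the bulk pairing --- note it is this evaluation at $p$ that vanishes, not ``the derivative evaluated against $\omega$'', which is the convolution itself), subtract the profile $\ell(z)=z_{n+1}\la{\partial_{n+1}u}_{R}$, control the drift by Hardy's inequality, and divide by $\phi_\lambda*\omega$ using local doubling, the CFMS and gradient estimates of Lemma~\ref{HL-Green-pm.lem}. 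One clarification on the step you flag as the main obstacle: no cross-cancellation between the gradient term and the time term is needed. After subtracting $\ell$, the time term is handled by $\int \ell\,\partial_t\partial_i\Psi\,dz=0$ (a derivative of a compactly supported function has integral zero), and in the gradient term one splits $A^*=(A^*-\la{A^*}_{R})+\la{A^*}_{R}$: the oscillation piece gives $\alpha_A\sqrt{E_u}$, while the frozen piece $\la{A^*}_{R}\nabla\ell\cdot\nabla\partial_i\Psi=\partial_i\bigl(\la{A^*}_{R}\nabla\ell\cdot\nabla\Psi\bigr)$ is itself a tangential derivative and integrates to zero --- this is exactly why the lemma is stated for $\partial_t\psi$ and $\nabla_x\psi$ only. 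Your ``boundary term of an $\cL_0^*$-harmonic affine function'' picture is equivalent in substance, but the mechanism you assert (cancellation against $\int u\,\partial_t(\mathrm{test})$) is not what happens, and you leave it unexecuted.

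The genuine gap is the ``moreover'' clause. For $d_{n+1}((t',x',\lambda'),(t,x,\lambda))<\lambda/4$ neither of $R(t,x,\lambda)$ and $R(t',x',\lambda')$ contains the other, so the oscillation-type quantities $J_u$ and $\alpha_{A,B}$ are \emph{not} mutually comparable across the two boxes (only $E_u$ is, via Harnack and the gradient estimate), and Proposition~\ref{whitney-carleson.prop} gives an $L^\infty$ bound by the Carleson norm, not comparability. Worse, your fallback ``$J_u$ is dominated by $E_u$ on the larger box'' only yields $\beta_u\le C$, which destroys exactly the smallness that \eqref{ratio-general.eq} must transmit to Theorem~\ref{green-main-estimate.thm} and Theorem~\ref{weights-characterization.thm}; with that bound the main theorem could not be derived. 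The paper instead proves the primed version directly: the test function is still centered at $(t,x)$ at scale $\lambda$, but the supports of its derivatives lie inside $R(t',x',\lambda')$, so one runs the whole argument with the slope $\la{\partial_{n+1}u}_{R(t',x',\lambda')}$, the coefficient average $\la{A^*}_{R(t',x',\lambda')}$, and all averages taken over the primed box, and handles the denominator by local doubling together with CFMS and the gradient estimate at $(t',x',\lambda')$. Your proposal needs this modification to cover the ``moreover'' statement; as written, that part does not go through.
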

\begin{proof}
We directly prove the general claim with $(t',x',\lambda')$ on the right. Once \eqref{energy-general.eq} has been proved, \eqref{ratio-general.eq} will follow immediately since
\begin{align*}
	(\phi_\lambda * \omega)(t,x) 
	\geq \frac{\omega(Q^n(t,x,\lambda/2))}{\lambda^{n+1}} 
	&\geq  C \frac{\omega(Q^n(t',x',\lambda'/5))}{(\lambda')^{n+1}} \\
	&\geq  C \frac{\omega(Q^n(t',x',\lambda'))}{(\lambda')^{n+1}} 
	\geq C \sqrt{E_u(t',x',\lambda')},
\end{align*}
where we have used the lower bound for $\phi$, local doubling, the CFMS-estimate and the gradient estimate from Lemma~\ref{HL-Green-pm.lem}. Hence, it suffices to prove \eqref{energy-general.eq}.
To this end, fix a smooth function $h$ of one variable with $h(0) =1$ and support in $(-1/4,1/4)$ and define the extension 
\[
\Psi(s,y,\mu) = \frac{1}{\lambda^{n+1}}\psi\left( \frac{t-s}{\lambda^{2}}, \frac{x-y}{\lambda} \right) h\left( \frac{\mu}{\lambda} \right).
\]
We will frequently use that the non-zero values of the derivatives of this function in the upper-half-space are contained in $R(t',x',\lambda')$ and that these derivatives are uniformly bounded by certain powers of $\lambda \approx \lambda'$. The Riesz formula \eqref{Rieszformeq.eq} applied to $\partial_i \Psi$ for $i = 1, \ldots, n$ yields
\begin{align*}
\partial_{i} (\omega * \psi_{\lambda})(t,x)
&= -\int_{\ree_{+}} A^{*}\nabla u \cdot \nabla \partial_i \Psi \, dz
-  \int_{\ree_{+}} Bu \cdot \nabla \partial_i \Psi \, dz
- \int_{\ree_{+}}  u \partial_t \partial_i \Psi \, dz \\
&\eqqcolon -\I(i) - \II(i) - \III(i).
\end{align*}
We estimate these three terms separately. Note carefully that here the left-hand side differs from the left-hand side in \eqref{energy-general.eq} by a factor of $\lambda^2$ when $i=1$ and a factor $\lambda$ when $i=2,\ldots,n$. 

As the derivative of a compactly supported function has integral zero, we can write
\[
\III(i) = \int_{\ree_{+}} \bigl( u - z_{n+1} \la{\partial_{n+1}u}_{R(t',x',\lambda')}\bigr) \partial_t \partial_i \Psi \, d z.
\]
and a desirable bound in terms of $J_u(t',x',\lambda')^{1/2}$ follows from H\"older's and Poincar\'e's inequality.

By H\"older's inequality and the support properties of $\Psi$, we also obtain 
\begin{align*}
\ab{\II(i)} 
&\leq  C \lambda^{n+2} 
\|\nabla \partial_i  \Psi\|_\infty  \left(\fint_{R(t',x',\lambda')} \frac{|u|^{2}}{z_{n+1}^{2}} \, dz \right)^{1/2} 
\times \left(\fint_{R(t',x',\lambda')} |B|^{2} z_{n+1}^2 \, dz  \right)^{1/2}
\end{align*}
and a desirable bound in terms of $E_u(t',x',\lambda')^{1/2} \alpha_B(t',x',\lambda')$ follows from the usual Hardy's inequality.

In order to control the most substantial contribution $\I(i)$, we introduce $\ell(z) \coloneqq z_{n+1} \la{\partial_{n+1}u}_{R(t',x',\lambda')}$ and find
\begin{align*}
\I(i) 
&= \int_{\ree_{+}} A^{*}\nabla (u - \ell) \cdot \nabla \partial_i \Psi \, dz
  + \int_{\ree_{+}} (A^{*} - \la{A^{*}}_{R(t',x',\lambda')}) \nabla \ell \cdot \nabla \partial_i \Psi \, dz\\
&\quad+ \int_{\ree_{+}} \la{A^{*}}_{R(t',x',\lambda')} \nabla \ell \cdot \nabla \partial_i \Psi \, dz.
\end{align*}
By H\"older's inequality and the size and support properties of $\Psi$, the first term integral admits a desirable bound in terms of $J_u(t',x',\lambda')^{1/2}$. Likewise, a desirable bound for the second integral in terms of $\alpha_{A}(t',x',\lambda') E_u(t',x',\lambda')^{1/2}$ follows.
The third term is zero, because the integrand
\[
\la{A^{*}}_{R(t',x',\lambda')} \nabla \ell \cdot \nabla \partial_i \Psi =  \partial_i \bigl(\la{A^{*}}_{R(t',x',\lambda')} \nabla \ell \cdot \nabla  \Psi \bigr),
\]
is the (tangential) derivative of a smooth and compactly supported function.
\end{proof}
\section{A criterion for anisotropic $A_\infty$-weights}
\label{anisotropic weights}

\noindent In this section, we provide a criterion for anisotropic (local) $A^\infty$-weights, inspired by \cite{FKP,  BES-DKP} and tailored towards the estimates for parabolic measure in Lemma~\ref{parab-measure-main.lem}. Unlike the references cited, we do not assume the weight function to be globally doubling but proceed under the following background assumption.
\begin{definition}[Local doubling]
A locally finite Borel measure $\omega$ on a metric space $(X,d)$ is locally doubling in an open set $U$
if there exists a constant $C_\omega$ such that for all $x \in U$ and $r > 0$ such that $B(x,2r) \subset U$ it follows $\omega(B(x,2r)) \le C_\omega \omega(B(x,r))$. 
\end{definition}
In the following, we fix an $n$-tuple $(p_1,\ldots,p_n) \in [1,\infty)^n$ and work in $\R^n$ with the metric
\begin{align*}
	d(z,w) \coloneqq \max_{1 \le j \le n} |z_j-w_j|^{1/p_j}.
\end{align*}
For cubes in $\R^n$ and Carleson boxes in $\R^{1+n}$ we use the same notation as in Section~\ref{General notation.sec}. For functions $f$ on $\R^n$ and $\lambda>0$ we introduce the anisotropic dilations
\begin{equation*}
	f_\lambda(z) \coloneqq \frac{1}{\lambda^p} \left(\frac{z_1}{\lambda^{p_1}},\ldots, \frac{z_n}{\lambda^{p_n}}\right),
\end{equation*}
where $p \coloneqq \sum_{j=1}^{n}p_j$. Earlier on in \eqref{parabolic-dilation.eq} we used $(p_1,p_2,\ldots,p_n) = (2,1,\ldots,1)$. 

The global doubling hypothesis in \cite{FKP, BES-DKP} is related to the use of heat extensions and infinite speed of propagation. Replacing heat extensions by more locally behaved averages  is the motivation of the following definition. Therein, the heat kernel $\phi$ would correspond to the profile $g(s) = e^{-s^2/2}$.

\begin{definition}[Split heat kernel]
An $(n+1)$-tuple $(\phi,\psi_1,\ldots,\psi_n)$ of smooth functions on $\R^n$ is called a split heat kernel if there is a smooth profile $g : \R \to \R$ with $1_{[-1/2,1/2]} \le g \le 1_{[-1,1]}$ and integral $1$, such that
\begin{enumerate}
	\item $\displaystyle \phi(z)= \prod_{i=1}^{n}g(z_i)$,
	\item $\displaystyle \psi_j(z) = \biggl(\int_{-\infty}^{z_{j}} g(s) \, s \, ds\biggr) \prod_{i \ne j} g(z_{i}) .$
\end{enumerate}
\end{definition}
The split heat kernel satisfies $\partial_j \psi_j(z) = z_j \phi(z)$ for $j=1,\ldots, n$ and therefore its (anisotropic) dilations satisfy the following identity, which is reminiscent to the heat equation:
\begin{align}
\label{partialphircomput.eq}
\begin{split}
\lambda \partial_\lambda (\phi_\lambda)
 &= - \sum_{j=1}^{n} p_j \bigl(\phi_\lambda + (z_j \partial_j \phi)_\lambda\bigr) \\
 &= - \sum_{j=1}^{n} p_j  (\partial_j^2 \psi_j)_\lambda
 = - \sum_{j=1}^{n} p_j \lambda^{2p_j} \partial_j^2 \bigl((\psi_j)_\lambda \bigr).
\end{split}
\end{align}

The following is our main result on anisotropic weights.

\begin{theorem}
\label{weights-characterization.thm}
Let $Q_0 \subset \R^n$ be a metric ball and $\omega$ be a locally doubling Radon measure on $4Q_0$ that is not identically zero on $2Q_0$. Suppose that there is a split heat kernel $(\phi, \psi_1, \ldots, \psi_n)$ and a constant $\varepsilon > 0$ such that for $j=1,\ldots,n$ and $k=1,2$,
\begin{align}
\label{locparalph.eq}
\biggl\|\frac{|(\partial_j \psi_j)_{\lambda} * \omega| \, |(\partial_j \phi)_{\lambda} * \omega|}{|\phi_{\lambda} * \omega|^{2}} \, \frac{dz \, d\lambda}{\lambda}\biggr\|_{\mathcal{C}(2Q_0)} &\le \varepsilon, \\
\label{locparbetat.eq}
\biggl\|\frac{(\partial_j^{k} \phi)_{\lambda} * \omega}{\phi_{\lambda} * \omega}\biggr\|_{L^{\infty}(R(2Q_0))} +  \biggl\|\frac{(\partial_j^{k}\psi_j)_{\lambda} * \omega}{\phi_{\lambda} * \omega}\biggr\|_{L^{\infty}(R(2Q_0))} &\le \sqrt{\varepsilon}.
\end{align}
Then $\omega$ is absolutely continuous with respect to Lebesgue measure on $2Q_0$ and there is a constant $C$ depending on $n, p_1, \ldots, p_n$ and the local doubling of $\omega$ such that
\begin{equation}
\label{Ainfsingscalefkp.eq}
0 \leq \log \left(\fint_{Q} \omega(z) \, dz \right)  - \fint_{Q} \log \omega(z) \, dz  \le C \bigl(\sqrt{\varepsilon} + \varepsilon\bigr)
\end{equation}
for all metric balls $Q \subset Q_0$.
\end{theorem}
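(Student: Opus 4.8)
The plan is to prove the quantitative $A_\infty$ bound \eqref{Ainfsingscalefkp.eq} by a Bellman-function / FKP-type argument adapted to the anisotropic, locally doubling setting. The first step is to reduce everything to a \emph{single fixed ball} $Q \subset Q_0$: fix $Q$, let $\sigma(z) \coloneqq (\phi_{\lambda(Q)} * \omega)$ evaluated appropriately on an enlargement of $Q$, and observe that since $\omega$ is locally doubling on $4Q_0$ the smoothed densities $u_\lambda(z) \coloneqq (\phi_\lambda * \omega)(z)$ are comparable to $\omega(B(z,\lambda))/\lambda^p$ whenever the relevant balls stay inside $4Q_0$. This comparability, together with the lower Lebesgue-density theorem for Radon measures, will already yield $\omega \ll dz$ on $2Q_0$ once we have the two-sided bound \eqref{Ainfsingscalefkp.eq} for a cofinal family of balls; so the heart of the matter is the displayed inequality.

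Next I would set up the telescoping identity that is the anisotropic analogue of the heat-flow computation in \cite{FKP, BES-DKP}. For $z \in Q$ write $\log u_{r_Q}(z) - \log u_{\lambda}(z) = \int_\lambda^{r_Q} \partial_s \log u_s(z)\, ds$ and use \eqref{partialphircomput.eq} to express $s\,\partial_s u_s = -\sum_j p_j \, s^{2p_j}\,\partial_j^2 (\psi_j)_s * \omega$. Integrating over $z \in Q$ and using Fubini, an integration by parts in $z_j$ moves one derivative off $\psi_j$; the resulting ``square function'' is controlled by the two hypotheses: the pointwise bounds \eqref{locparbetat.eq} let us bound the zeroth-order ratios $(\partial_j^k\psi_j)_s*\omega / \phi_s*\omega$ and $(\partial_j^k\phi)_s*\omega / \phi_s*\omega$ in $L^\infty$, while the Carleson condition \eqref{locparalph.eq} controls the genuinely quadratic term $|(\partial_j\psi_j)_s*\omega|\,|(\partial_j\phi)_s*\omega|\,|\phi_s*\omega|^{-2}$ integrated against $dz\,ds/s$ over the Carleson box $R(Q)$. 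Combining, $\fint_Q (\log u_{r_Q} - \log\omega)\,dz \le C(\sqrt\varepsilon + \varepsilon)$ after passing $\lambda \to 0$ (using that $u_\lambda \to \omega$ a.e.\ by local doubling and the Lebesgue differentiation theorem for the mollifier $\phi$). The matching bound $\log(\fint_Q u_{r_Q}) \le \fint_Q \log u_{r_Q} + C(\sqrt\varepsilon+\varepsilon)$ comes from Jensen applied the other way, i.e.\ controlling the oscillation of the \emph{smooth} function $u_{r_Q}$ across $Q$ by the same square function estimate at the single fixed scale $r_Q$; then $\log(\fint_Q \omega) - \log(\fint_Q u_{r_Q})$ is bounded by local doubling. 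Chaining these gives \eqref{Ainfsingscalefkp.eq}, and the left inequality $0 \le \log\fint_Q\omega - \fint_Q\log\omega$ is just Jensen.

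The main obstacle I anticipate is handling the \emph{boundary/stopping-time issues} created by having only local doubling rather than global doubling: all the integrations by parts and the telescoping in $\lambda$ must be confined to Carleson boxes $R(Q)$ with $Q$ comfortably inside $Q_0$, and one must check that the mollified densities $u_\lambda$ stay bounded away from $0$ and $\infty$ on the relevant enlargements uniformly — this is exactly where $\omega \not\equiv 0$ on $2Q_0$ and local doubling on $4Q_0$ enter, and it is what forces the geometric shrinkage $Q \subset Q_0$, $2Q_0$, $4Q_0$ in the statement. A second, more technical point is that $\log$ is not globally Lipschitz, so to run the square-function estimate one needs the a priori two-sided bounds on $u_\lambda/u_s$ for comparable scales $\lambda \approx s$, which again follow from \eqref{locparbetat.eq} (these give a \emph{Lipschitz-in-$\log\lambda$} bound for $\log u_\lambda(z)$) — this is the role of the $L^\infty$ hypothesis and it must be invoked before, not after, the Carleson estimate. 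Once these structural points are in place the remaining computations are the routine FKP manipulations, and the anisotropy is harmless because it only rescales each coordinate separately and is already built into the dilations and the identity \eqref{partialphircomput.eq}.
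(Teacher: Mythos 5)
Your proposal correctly identifies the central computation, and it is essentially the paper's: telescope $\log u_\lambda$ in $\lambda$ via the split-heat-kernel identity \eqref{partialphircomput.eq}, integrate by parts in $z_j$ to split off boundary terms, and control the boundary terms by \eqref{locparbetat.eq} and the quadratic square-function term by \eqref{locparalph.eq}. But there is a genuine gap in how you close the argument. You decompose the Jensen gap as
$\bigl[\log\fint_Q\omega - \log\fint_Q u_{r_Q}\bigr] + \bigl[\log\fint_Q u_{r_Q} - \fint_Q\log u_{r_Q}\bigr] + \bigl[\fint_Q\log u_{r_Q} - \fint_Q\log\omega\bigr]$
and dispose of the first bracket by ``local doubling.'' Local doubling gives only a bound of order $\log C_\omega$, a constant of size $O(1)$ --- and that is fatal, because \eqref{Ainfsingscalefkp.eq} demands a bound of size $C(\sqrt\varepsilon + \varepsilon)$. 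As written, your chain only yields $\log\fint_Q\omega - \fint_Q\log\omega \leq O(1)$, which is trivial.

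The paper avoids this by handling the scale-mismatch term (its term $\I_1$) more delicately: it compares $\fint_Q u(\cdot,\delta)$ to $\sup_Q u(\cdot,1)$ (not to $\fint_Q u(\cdot,1)$), writes
$\I_1 \leq \log_+\bigl(1 + \inf_{z}\tfrac{1}{u(z,1)}\fint_Q(u(z,\delta)-u(z,1))\,dz\bigr)$,
and then bounds $\fint_Q(u(\cdot,\delta)-u(\cdot,1))\,dz$ by boundary integrals using the fundamental theorem of calculus and \eqref{partialphircomput.eq}, controlled by \eqref{locparbetat.eq}. The crucial extra ingredient is the pointwise sub-polynomial growth $u(z,\lambda)/u(z,1) \leq \lambda^{-\sqrt\varepsilon p}$, derived by exponentiating the $L^\infty$ hypothesis \eqref{locparbetat.eq}; this is precisely the ``Lipschitz in $\log\lambda$'' control that you intuited was necessary, but you did not actually deploy it in the place where it is needed (bounding the scale-mismatch term), and without it the integral $\int_0^1 \lambda^{-\sqrt\varepsilon p}\,\lambda^{p_j-1}\,d\lambda$ does not even appear. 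A secondary gap: the lower density theorem does not give $\omega \ll dz$ without an a priori upper density bound, and the limit $\delta\to 0$ inside $\fint_Q \log u_\delta$ needs uniform control near $\log 0$. The paper settles both by first proving the quantitative estimate for the smoothed densities $u(\cdot,\delta)$ uniformly in $\delta$, upgrading it to a uniform $A_\infty$-type inequality $\omega_\delta(E)/\omega_\delta(Q) \leq c(|E|/|Q|)^\gamma$, passing that to the weak limit, and then using uniform BMO bounds plus John--Nirenberg to justify the limit in the log-integral. Your proposal gestures at these issues but does not resolve them.
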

\begin{remark}
The additional assumption \eqref{locparbetat.eq} compensates the lack of algebraic properties of the heat kernel compared to \cite{FKP, BES-DKP}. Indeed, in these references $- \psi_j = \phi$ is the heat kernel and therefore \eqref{locparbetat.eq} follows from \eqref{locparalph.eq} via regularity estimates for solutions to the heat equation~\cite[Lemma~3.3]{BES-DKP}. 
\end{remark}
\begin{proof} 
We extend $\omega$ to $\ree_{+}$ via the split heat kernel as
\begin{equation*}
	u(z,\lambda) \coloneqq (\phi_{\lambda} * \omega)(z), \quad v_j(z,\lambda) \coloneqq ((\psi_j)_{\lambda} * \omega)(z) \qquad (\lambda > 0).
\end{equation*}
In a first step, we control the integral in \eqref{Ainfsingscalefkp.eq} for $u(\bigdot, \delta)$ replacing $\omega$. All further assertions will follow upon carefully passing to the limit as $\delta \to 0$.

\emph{Step 1: The quantitative estimate}.  We write $Q_0 = Q(z_0,r_0)$. Let $Q = Q(w, r) \subset 2Q_0$ and $\delta \in (0, r_0)$. Here, we are going to prove
\begin{align}\label{weights-characterization.thm.eq1}
	0 \leq \log \left(\fint_{Q} u(z,\delta) \, dz \right)  - \fint_{Q} \log u(z,\delta) \, dz
	\le \begin{cases}
		C & \text{ if } \delta > r \\C (\sqrt{\varepsilon} + \varepsilon) & \text{ if } \delta \leq r
	\end{cases}.
\end{align}

The lower bound is due to Jensen's inequality. The upper bound for $\delta > r$ follows from doubling of $\omega$ since
\begin{align*}
	C_\omega^{-1} \frac{\omega(Q(w,\delta))}{\delta^p} \leq u(z,\delta) \leq  C_\omega \frac{\omega(Q(w,\delta))}{\delta^p} \qquad (z \in Q),
\end{align*} 
so that \eqref{weights-characterization.thm.eq1} follows with $C \coloneqq 3 \log C_\omega$. 

We turn to the case $\delta \leq r$. Since assumption and claim are translation and scaling invariant, we can assume without loss of generality that $Q = Q(0,1)$. Local doubling implies $\inf_{Q \times (\delta,r)} u > 0$. This will guarantee that all integrals below converge absolutely. We write the left-hand side in \eqref{weights-characterization.thm.eq1} as
\begin{align*}
\left[ \log\left( \fint_{Q} u(z,\delta) \, dz \right) - \fint_{Q} \log u(z,1) \, dz \right] + &\left[ \fint_{Q} \log u(z, 1) \, dz - \fint_{Q} \log u(z,\delta) \, dz \right] \\ 
&\eqqcolon \I + \II
\end{align*}
and start the estimation from the term $\II$. 

By the fundamental theorem of calculus and  \eqref{partialphircomput.eq}, 
\begin{align*}
\II 
&=   \fint_Q \int_\delta^1 \partial_\lambda (\log u) \, d\lambda\, dz \\
&=  \fint_Q \int_\delta^1 \frac{\partial_\lambda u}{u}  \,  d\lambda \, dz \\
&=  \fint_Q \int_\delta^1  - \sum_{j=1}^{n} \frac{p_j \partial_j^2 (v_j)}{u} \, \frac{d\lambda} {\lambda^{1-2p_j}} \, dz \\
&= - \sum_{j=1}^{n} p_j \int_\delta^1 \fint_Q  \partial_j \biggl(\frac{\partial_j v_j}{ u} \biggr)  +  \frac{\partial_j v_j \cdot \partial_j u}{|u|^2} \, dz \, \frac{d\lambda}{\lambda^{1-2p_j}} \\
&\eqqcolon - \sum_{j=1}^n p_j\II_{1,j}+p_j \II_{2,j}.
\end{align*}
Assumption \eqref{locparalph.eq} implies $|\II_{2,j}| \leq \varepsilon$ for all $j$.
For the integrals $\II_{j,1}$ we apply the fundamental theorem of calculus and then use \eqref{locparbetat.eq} to conclude
\[
\II_{j,1} \leq \int_{0}^{1} \int_{\partial Q \cap \{z_j = \pm 1\}} \biggl|\frac{\partial_j v_j}{ u} \biggr| \, dz \, \frac{d\lambda }{\lambda^{1-2p_j}}
\le C \sqrt{\varepsilon}.
\]
Altogether, we have shown that $\II \leq C(\varepsilon + \sqrt{\varepsilon})$.

Next, we decompose
\begin{align*}
\I &= 
\biggl[ \log\biggl( \fint_{Q} u(z, \delta) \, dz \biggr)  - \log \sup_{z \in Q} u(z,1) \biggr]
+ \biggl[\sup_{z \in Q} \log u(z,1) - \fint_{Q} \log u(z, 1) \, dz \biggr] \\
&\eqqcolon \I_1 + \I_2.
\end{align*}
By the mean value theorem and \eqref{locparbetat.eq} we have $\I_2 \leq C \sqrt{\varepsilon}$. Local doubling of $\omega$ also implies $\I_1 \leq C$, so that it remains to control $\I_1$ for small $\varepsilon$, say $\sqrt{\eps} \leq \min_j \frac{p_j}{2p}$. 

Writing $\I_1$ as the quotient of logarithms, we find
\begin{equation}
\label{log-in-painfty.eq}
\begin{split}
\I_1 
\leq \log_{+} \biggl(1 +  \inf_{z \in \overline{Q}} \frac{1}{u(z,1)} \fint_{Q} u(z,\delta)- u(z,1) \, dz \biggr),
\end{split}
\end{equation}
where $\log_+$ is the positive part of the logarithm function. By the fundamental theorem of calculus and \eqref{partialphircomput.eq}, the average on the right is bounded by
\begin{align*}
\fint_{Q} u(z,\delta) - u(z,1) \, dz
&= - \fint_Q \int_\delta^1 \partial_\lambda u \, d \lambda \, dz \\
&= \int_\delta^1 \fint_Q \sum_{j=1}^{n} p_j \partial_j^{2} v_j \, dz \, \frac{d\lambda}{\lambda^{1-2p_j}} \\ 
&\leq  C \sum_{j=1}^{n} p_j \int_\delta^1 \int_{\partial Q \cap \{z_j = \pm 1\}} \partial_j v_j \, dz \, \frac{d\lambda}{\lambda^{1-2p_j}} \\
&\leq \sum_{j=1}^{n} C \sqrt{\varepsilon} \int_0^1 \int_{\partial Q \cap \{z_j = \pm 1\}} |u(z,\lambda)|  \, dz \, \frac{d\lambda}{\lambda^{1-p_j}},
\end{align*}
where the final step follows from \eqref{locparbetat.eq}. By the same arguments, we find for all $z \in \overline{Q}$ and all $\lambda \in (0,1)$ that
\begin{align*}
\frac{u(z,\lambda)}{u(z,1)}
&= \exp \biggl( -\int_{\lambda}^{1} \partial_{r} \log u(z,r) \, dr  \biggr) \\
&= \exp \biggl( \sum_{j=1}^{n} p_{j} \int_{\lambda}^{1} \frac{\partial_j^{2} v_j(z,r)}{u(z,r)} \, \frac{dr}{r^{1-2p_j}}  \biggr) \\
& \leq \exp \biggl( \sum_{j=1}^{n} p_{j} \int_{\lambda}^{1} \sqrt{\varepsilon} \, \frac{dr}{r}  \biggr) 
= \lambda^{ - \sqrt{\varepsilon} p  }.
\end{align*}
Combining the previous two estimates gives
\begin{align*}
\inf_{z \in \overline{Q}} \frac{1}{u(z,1)} \fint_{Q} u(z,\delta)- u(z,1) \, dz
\leq C \sqrt{\varepsilon} \biggl(\sum_{j=1}^{n}  \int_0^1  \lambda^{ - \sqrt{\varepsilon} p } \, \frac{d\lambda}{\lambda^{1-p_j}}\biggr).
\end{align*}
The integrals are constants depending on $p_1,\ldots,p_n$ since we assume $\sqrt{\eps} \leq \min_j \frac{p_j}{2p}$. Going back to \eqref{log-in-painfty.eq}, we get the desired bound
\begin{align*}
	\I_1 \leq \log_+(1 + C \sqrt{\varepsilon}) \leq C \sqrt{\varepsilon}
\end{align*}
and the proof of \eqref{weights-characterization.thm.eq1} is complete.

\emph{Step 2: Absolute continuity of $\omega$}. For clarity, let us write $\omega_\delta \coloneqq u(\bigdot,\delta) \, dz$ for the measures with weight function $u(\bigdot,\delta)$. In the limit as $\delta \to 0$, they converge weakly to $\omega$ on $2Q_0$. By classical theory of weights, \eqref{weights-characterization.thm.eq1} implies $\omega_\delta \in A_\infty(2Q_0)$, uniformly in $\delta$, that is, there are constants $c, \gamma > 0$ such that for all balls $Q \subset (3/2)Q_0$, all Borel sets $E \subset Q$ and all $\delta \in (0, r_0)$ we have
\begin{align}\label{weights-characterization.thm.eq2}
	\frac{\omega_\delta(E)}{\omega_\delta(Q)} \leq c \biggl(\frac{|E|}{|Q|}\biggr)^{\gamma},
\end{align}
see for instance \cite[Theorem~IV.2.15]{GCRDF}. By local doubling $\omega_\delta(Q) \leq \omega (2Q) \leq C_\omega \omega(Q)$ for $\delta$ small enough and if $E$ is open, then also $\liminf_{\delta \to0} \omega_\delta(E) \geq \omega(E)$ by weak convergence. Thus, 
\begin{align*}
		\frac{\omega(E)}{\omega(Q)} \leq C_\omega c \biggl(\frac{|E|}{|Q|}\biggr)^{\gamma}.
\end{align*}
This estimate remains valid for all Borel sets $E \subset Q$ by outer regularity of $\omega$. In particular,  $\omega$ is absolutely continuous with respect to Lebesgue measure on $(3/2)Q_0$.

\emph{Step 3: Conclusion}. Let $Q \subset Q_0$. We complete the proof of \eqref{Ainfsingscalefkp.eq} by passing to the limit in \eqref{weights-characterization.thm.eq1} along a sequence $\delta \to 0$. Since $\omega \in L^1((3/2)Q_0)$, we have $u(\bigdot, \delta) \to \omega$ in $L^1(Q)$ and therefore
\begin{align*}
	\lim_{\delta \to 0} \log \biggl( \fint_Q u(z,\delta) \, dz\biggr)  = \log \biggl(\fint_Q \omega(z) \, dz \biggr).
\end{align*}
The limit with logarithm inside the integral is more delicate. Upon passing to a subsequence, we can arrange $u(\bigdot,\delta) \to \omega$ pointwise almost everywhere and hence the same for $\log u(\bigdot,\delta) \to \log \omega$. Moreover, \eqref{Ainfsingscalefkp.eq} implies that $(\fint_Q \log u(z, \delta) \, dz)_\delta$ is bounded and \eqref{weights-characterization.thm.eq2} implies that $(\log u(\bigdot, \delta))_\delta$ is bounded in $BMO((4/3)Q)$, see for instance \cite[Cor.~IV.2.19]{GCRDF}. Due to the John--Nirenberg inequality, $(\log u(\bigdot,\delta))_\delta$ is bounded in $L^2(Q)$. Since boundedness and pointwise convergence imply weak convergence in $L^2(Q)$, we can conclude
\begin{align*}
	\lim_{\delta \to 0} \fint_Q \log u(z,\delta) \, dz = \fint_Q \log \omega(z) \, dz.
\end{align*}
Now, \eqref{weights-characterization.thm.eq1} follows.
\end{proof}
\section{Proofs of the main results}
\label{main results.sec}

\noindent Eventually, we assemble all estimates proved so far in order to obtain Theorem~\ref{main-theorem}.  We start with a slightly more precise local variant from which the rest will follow effortlessly.

\begin{theorem}
\label{main-thm-general.thm}
Let $M_0$ be given.
There are constants $\varepsilon_0 > 0$, $\kappa_0 \geq 40$, $\delta_0 > 0$, $\theta_0 \in (0, 1/4]$, $\gamma>0$ and $C \ge 1$ such that the following hold whenever $\cL$ is $(M_0,\varepsilon)$-parabolic and $\omega \coloneqq \omega^p$ is the $\cL$-parabolic measure with pole at $p \coloneqq a^{+}(t_0,x_0,\kappa\lambda_0)$ for some  $(t_0,x_0,\lambda_0) \in \ree_{+}$, $\kappa \geq \kappa_0$.
\begin{enumerate}
	\item If $\no{\nu_{A,B}}_{\mathcal{C}} \le \delta_0$ and $\theta \leq \theta_0$, then $\omega$ is absolutely continuous with respect to Lebesgue measure on $Q^n(t_0,x_0, 2\theta^2 \lambda_0)$.
	\item Denoting $k = \frac{d\omega}{dt\,dx}$ in the setting of (1), it follows for all $(t,x,\lambda) \in R(t_0,x_0,\lambda_0)$ that
	\begin{align*}
		\log \left(\fiint_{Q^{n}(t,x,\theta^2\lambda  )} k(t,x) \, dt \, dx \right)  &- \fiint_{Q^{n}(t,x,\theta^2\lambda)} \log k(t,x) \, dt \, dx \\
		&\le C \bigl( \theta^{\gamma} + \sqrt{\no{\nu_{A,B}}_{\mathcal{C}(R(t,x,2\lambda)) } } \bigr)  .
	\end{align*}
\end{enumerate}
\end{theorem}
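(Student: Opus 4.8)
The plan is to combine the three principal results established above into a single application. Fix $(t,x,\lambda)\in R(t_0,x_0,\lambda_0)$; set $p\coloneqq a^{+}(t_0,x_0,\kappa\lambda_0)$, $u\coloneqq G(p,\bigdot)$, $\omega\coloneqq\omega^{p}$, and $Q_0\coloneqq Q^{n}(t,x,\theta^{2}\lambda)$, which is a metric ball for the parabolic metric $d_n$ on $\R^n$, i.e.\ exactly the metric of Section~\ref{anisotropic weights} associated with the exponents $(p_1,\dots,p_n)=(2,1,\dots,1)$. I will apply the anisotropic $A_\infty$-criterion (Theorem~\ref{weights-characterization.thm}) to $\omega$ on $Q_0$ with the parameter
\[
\varepsilon\coloneqq C_{*}\bigl(\theta^{\gamma}+\no{\nu_{A,B}}_{\mathcal{C}(R(t,x,2\lambda))}\bigr),
\]
$C_{*}$ a large structural constant. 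Its two conclusions will then give, respectively, absolute continuity of $\omega$ on $2Q_0=Q^{n}(t,x,2\theta^{2}\lambda)$ — so that $k=\tfrac{d\omega}{dt\,dx}$ is defined there, and taking $(t,x,\lambda)=(t_0,x_0,\lambda_0)$ yields part~(1) — and, applied to the ball $Q=Q_0$, the quantitative estimate of part~(2), up to relabelling $\gamma$.

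To feed the criterion, I would fix a split heat kernel $(\phi,\psi_1,\dots,\psi_n)$ whose even profile is supported well inside $(-\tfrac{9}{16},\tfrac{9}{16})$, so that $\phi\ge1$ on $Q^{n}(0,0,\tfrac12)$ while $\phi$, $\psi_j$, $\partial_j\phi$ and $z_j\phi$ are smooth and supported in $Q^{n}(0,0,\tfrac34)$. Since $\partial_j\psi_j=z_j\phi$, each function occurring in \eqref{locparalph.eq}, \eqref{locparbetat.eq} — namely $\partial_j^{k}\phi$ and $\partial_j^{k}\psi_j$ for $k=1,2$ — is a single partial derivative of one of the four functions $\phi,\partial_j\phi,\psi_j,z_j\phi$, each of which qualifies as the test function ``$\psi$'' in Lemma~\ref{parab-measure-main.lem}. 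Hence \eqref{ratio-general.eq} applies and directly bounds every quotient appearing in \eqref{locparbetat.eq} by $C(\beta_u+\alpha_{A,B})$ and makes the integrand of \eqref{locparalph.eq} pointwise $\le C(\beta_u+\alpha_{A,B})^{2}$, with $\beta_u$ and $\alpha_{A,B}$ evaluated at points at parabolic distance $<\mu/4$ from the relevant point of $R(t,x,2\theta^{2}\lambda)$.

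It then remains to bound these right-hand sides. From $\no{\nu_{A,B}}_{\mathcal{C}}\le\delta_0$, Proposition~\ref{whitney-carleson.prop} yields both the global bound $\alpha_{A,B}(z)^{2}\le C\delta_0$ and the local bound $\alpha_{A,B}(z)^{2}\le C\no{\nu_{A,B}}_{\mathcal{C}(R(t,x,2\lambda))}$ for $z\in R(t,x,2\theta^{2}\lambda)$ (once $\theta_0$ is small). The global bound supplies the a priori smallness needed for Theorem~\ref{green-main-estimate.thm}, which then gives
\[
\no{\mu_{\beta_u}}_{\mathcal{C}(R(t,x,2\theta^{2}\lambda))}\le C\bigl(\theta^{\gamma}+\no{\nu_{A,B}}_{\mathcal{C}(R(t,x,\lambda))}\bigr)
\]
(apply part~(2) of that theorem with $\sqrt2\,\theta$ in place of $\theta$). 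Together with the local $L^\infty$-bound for $\alpha_{A,B}^{2}$ this gives \eqref{locparalph.eq} for the above $\varepsilon$. For \eqref{locparbetat.eq} one needs the \emph{pointwise} bound $\beta_u(s,y,\mu)\lesssim\theta^{\gamma}+\sqrt{\no{\nu_{A,B}}_{\mathcal{C}(R(t,x,2\lambda))}}$ on $R(t,x,2\theta^{2}\lambda)$; this follows by iterating the one-step recursion $\beta(s,y,\sigma\mu)^{2}\le\tfrac12\beta(s,y,\mu)^{2}+C\alpha_{A,B}(s,y,\mu)^{2}$ — which comes out of Lemma~\ref{non-constant-DLM-main.lem}, cf.\ the proof of Theorem~\ref{green-main-estimate.thm} — downward from scale $\lambda$, where $\beta_u\le1$ by the triangle inequality: the geometric factor $2^{-k}\asymp(\mu/\lambda)^{\gamma}$ discounts both the initial ratio and the coarse-scale values of $\alpha_{A,B}$ (controlled only through the global bound), while the fine-scale values of $\alpha_{A,B}$, at scales $\lesssim\theta^{2}\lambda$, are governed by the local Carleson norm via Proposition~\ref{whitney-carleson.prop}. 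Using $\theta^{\gamma}\le\sqrt{\theta^{\gamma}}$, this yields \eqref{locparbetat.eq} $\le\sqrt\varepsilon$ for $C_{*}$ large.

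Finally, the remaining hypotheses of Theorem~\ref{weights-characterization.thm} — local doubling of $\omega$ on $4Q_0$ and $\omega\not\equiv0$ on $2Q_0$ — are Lemma~\ref{HL-Green-pm.lem}(2) and the CFMS estimate, available since $4Q_0\subset R(t_0,x_0,4\lambda_0)$ for $\theta_0$ small. The criterion then gives absolute continuity on $2Q_0$ and, for $Q=Q_0$,
\[
\log\Bigl(\fiint_{Q^{n}(t,x,\theta^{2}\lambda)}k\Bigr)-\fiint_{Q^{n}(t,x,\theta^{2}\lambda)}\log k\le C(\sqrt\varepsilon+\varepsilon)\le C\bigl(\theta^{\gamma/2}+\sqrt{\no{\nu_{A,B}}_{\mathcal{C}(R(t,x,2\lambda))}}\bigr),
\]
and relabelling $\gamma/2\rightsquigarrow\gamma$ finishes the proof. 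I expect the main obstacle to be bookkeeping rather than conceptual: one cannot simply chain the three results using the crude global smallness $\delta_0$, because part~(2) demands the \emph{local} Carleson norm $\no{\nu_{A,B}}_{\mathcal{C}(R(t,x,2\lambda))}$ on the right — this is what forces carrying the pointwise $L^\infty$-control of $\alpha_{A,B}$ by a local Carleson norm through the $\beta_u$-iteration, and aligning the nested scales $\theta^{2}\lambda\subset2\theta^{2}\lambda\subset\lambda\subset2\lambda$ together with the ambient boxes (any residual slack — the $\mu/4$-shift in Lemma~\ref{parab-measure-main.lem}, or points escaping $R(t_0,x_0,\lambda_0)$ — being absorbed by shrinking $\theta_0$ and invoking the cited lemmas on the modestly larger box $R(t_0,x_0,4\lambda_0)$ after the reference-point change $\kappa\rightsquigarrow\kappa'$ of Lemma~\ref{HL-Green-pm.lem}).
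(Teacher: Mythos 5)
Your top-level strategy coincides with the paper's: apply Theorem~\ref{weights-characterization.thm} to $\omega$ on $Q_0 = Q^n(t,x,\theta^2\lambda)$, feed its hypotheses via Lemma~\ref{parab-measure-main.lem}, and bound the resulting $\varepsilon$ by combining Theorem~\ref{green-main-estimate.thm} with Proposition~\ref{whitney-carleson.prop}. The treatment of \eqref{locparalph.eq}, the choice of split heat kernel, and the handling of the ancillary hypotheses (local doubling, non-triviality, scaling of $\kappa$, $\lambda_0$) are essentially identical to the paper. Where you diverge is in verifying the $L^\infty$-condition \eqref{locparbetat.eq}.

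The paper never proves a pointwise bound on $\beta_u$. It exploits the extra flexibility already built into Lemma~\ref{parab-measure-main.lem}: the right-hand side of \eqref{ratio-general.eq} may be evaluated at any $(t',x',\lambda')$ with $d_{n+1}((t',x',\lambda'),(t,x,\lambda)) < \lambda/4$ while the left-hand side is fixed. Averaging in $(t',x',\lambda')$ over this Whitney-type region converts the pointwise quotient into a Whitney average of $\beta_u^2 + \alpha_{A,B}^2$, and that average is controlled directly by $\|\mu_{\beta_u}\|_{\mathcal{C}}$ and the $L^\infty$-bound on $\alpha_{A,B}^2$ from Proposition~\ref{whitney-carleson.prop} --- i.e.\ by the same $\varepsilon$ used for \eqref{locparalph.eq}, with no new iteration. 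Your alternative route, iterating the recursion \eqref{green-main-estimate.thm.beta-shrinks} to a pointwise bound on $\beta_u$, is viable in principle, but as written the scale accounting fails. Iterating yields $\beta_u(s,y,\mu)^2 \leq 2^{-k}\beta_u(s,y,\lambda)^2 + C\sum_{i=0}^{k-1}2^{-i}\,\alpha_{A,B}(s,y,\sigma^{k-1-i}\lambda)^2$. If the coarse/fine split is at scale $\theta^2\lambda$ as you propose, then with $\mu \lesssim \theta^2\lambda$ the coarsest fine index is $i_0 = O(1)$, so the aggregate discount $\sum_{i>i_0}2^{-i}$ over the coarse scales is $O(1)$, not $O(\theta^{2\gamma})$; the coarse contribution is therefore $O(\delta_0)$ --- far too large. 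The fix is that Proposition~\ref{whitney-carleson.prop} actually controls $\alpha_{A,B}(s,y,r)^2$ by $\|\nu_{A,B}\|_{\mathcal{C}(R(t,x,2\lambda))}$ for all $r$ up to a constant fraction $c\lambda$ (one needs $R(s,y,9r) \subset R(t,x,2\lambda)$, which for $(s,y)$ near $(t,x)$ and $\theta_0$ small forces only $r \lesssim c\lambda$, not $r \lesssim \theta^2\lambda$). Splitting there leaves only $O(1)$ coarse scales, each with weight $\approx 2^{-k} \lesssim \theta^{2\gamma}$, and the argument closes. With this correction your proof works, but it duplicates the iteration already carried out inside Theorem~\ref{green-main-estimate.thm}; the averaging device the paper uses is both shorter and sidesteps the pointwise bound entirely.
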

%
\begin{proof}
Let us first sort out the constants. We take $\varepsilon_0, \kappa_0$ as in Lemma~\ref{non-constant-DLM-main.lem} and $\theta_0, \gamma$ as in Theorem~\ref{green-main-estimate.thm}. By virtue of  Proposition~\ref{whitney-carleson.prop}, we can take the threshold $\delta_0$ such that the assumption $\no{\nu_{A,B}}_{\mathcal{C}}\leq \delta_0$ allows us to use Theorem~\ref{green-main-estimate.thm} for every $(t,x,\lambda) \in R(t_0,x_0,\lambda_0)$ and in particular for $(t_0,x_0, \lambda_0)$.

Both assertions will now follow from Theorem~\ref{weights-characterization.thm} applied to the parabolic measure $\omega$ on $Q_0 \coloneqq Q^{n}(t,x,\theta^2 \lambda)$. 

The local doubling has been discussed in Lemma~\ref{HL-Green-pm.lem} and we fix a split heat kernel with anisotropy $(p_1,\ldots,p_n) = (2,1,\ldots,1)$ and profile supported in $[-3/4,3/4]$. As in Lemma~\ref{parab-measure-main.lem}, we set $u \coloneqq G(p,\bigdot)$ and we also let $\mu_{\beta_u}$ be the same measure as in Theorem~\ref{green-main-estimate.thm}. Then \eqref{ratio-general.eq} yields \eqref{locparalph.eq} with
\begin{align*}
	 \varepsilon \coloneqq C \bigl(\no{\mu_{\beta_u}}_{\mathcal{C}(R(t,x,2\theta^2 \lambda))} + \no{\nu_{A,B}}_{\mathcal{C}(R(t,x,2\theta^2 \lambda))} \bigr) 
\end{align*}
and by averaging the general version of \eqref{ratio-general.eq} with respect to $(t',x',\lambda')$ on a suitable Whitney-type region, we get \eqref{locparbetat.eq} with the same choice of $\varepsilon$. Theorem~\ref{green-main-estimate.thm} guarantees that
\begin{align*}
	\varepsilon \leq C \bigl(\theta^{2\gamma} + \|\nu_{A,B}\|_{\mathcal{C}(R(t,x, 2\lambda))} \bigr) 
\end{align*}
and the proof is complete.
\end{proof}

\begin{proof}[Proof of Theorem~\ref{main-theorem}]
This is mainly a question of re-labeling objects given that 
Theorem~\ref{main-thm-general.thm} holds uniformly in $\theta \leq \theta_0$ and $\kappa \ge \kappa_0$. 

Indeed, given $\delta$, let us take $\theta < \theta_0$ so small that $\theta^\gamma < \sqrt{\delta}$.
Setting $\kappa \coloneqq \kappa_0/\theta^2$ and $\lambda_1 \coloneqq \lambda_0 / \theta^2$, we see that $a^{+}(t_0,x_0,\kappa\lambda_0) = a^{+}(t_0,x_0,\kappa_0\lambda_1)$ and that every parabolic cube $Q \subset Q^{n}(t_0,x_0,\lambda_0)$ is of the form $Q = Q^{n}(t,x,\theta^2 \lambda)$ with $(t,x,\lambda) \in R(t_0,x_0,\lambda_1)$.
Hence, Theorem~\ref{main-thm-general.thm} yields absolute continuity of $\omega$ on $Q^{n}(t_0,x_0, 2 \lambda_0)$ along with the estimate
\[
\log \biggl(\fiint_{Q} k(t,x) \, dt\, dx \biggr)  - \fiint_{Q} \log k(t,x) \, dt \, dx \le C \sqrt{\delta} . \qedhere
\]
\end{proof}
\begin{proof}[Proof of Corollary \ref{main-cor}]
The claim on reverse H\"older classes follows by Theorem~\ref{main-theorem} and \cite[Theorem~9]{Kor2},
taking into account that the John--Nirenberg theorem remains valid in metric spaces with a locally doubling measure. The claim on solvability of the Dirichlet problem then follows from a standard argument that is similar to the elliptic case, see e.g. \cite{NystromLP} or \cite[Lemma~4.19, Section 2]{HL-Mem}.
\end{proof}

\bibliography{DKPBESrefs2}
\bibliographystyle{alpha}

\end{document}